\documentclass[11pt]{article}

\usepackage[margin=2.5cm]{geometry}

\usepackage[utf8]{inputenc}
\usepackage{fourier}
\usepackage{microtype}
\usepackage{eurosym}

\usepackage{amsmath,amsfonts,amssymb,amsthm,mathtools}

\usepackage{authblk}

\usepackage{xcolor}

\usepackage{tikz}
\usepackage{pgfplots}
\pgfplotsset{compat=1.14}

 \usepackage[style=alphabetic,
   safeinputenc,
   url=false,giveninits=true,
   sorting=nyt,
   maxnames=5]%
   {biblatex}
 \renewbibmacro{in:}{
   \ifentrytype{article}{}{\printtext{\bibstring{in}\intitlepunct}}}
 \DeclareFieldFormat
   [article,inbook,incollection,inproceedings,patent,unpublished,online]
   {title}{#1}
 \DeclareFieldFormat
   [thesis]
   {title}{\mkbibemph{#1}}

 \addbibresource{biblio.bib}
 \addbibresource{biblio_MD.bib}
 \addbibresource{ma_biblio.bib}

\usepackage[colorlinks]{hyperref}
\usepackage{mdframed}

\usepackage{pgffor}

\newtheorem{thm}{Theorem}[section]%
\newtheorem{cor}[thm]{Corollary}%
\newtheorem{prop}[thm]{Proposition}%
\newtheorem{lem}[thm]{Lemma}%
\newtheorem{defi}[thm]{Definition}%
%
\theoremstyle{remark}
\newtheorem{rem}{Remark}%
\newcommand{\prfstep}[1]{\par\textbf{#1.} \hspace{2em}}

\newcommand{\DeclareAlphabet}[2]{
  \foreach \x in {A,B,...,Z}{%
    \expandafter\xdef
    \csname #1\x\endcsname{%
      \noexpand#2{\x}}%
  }
}

\DeclareAlphabet{d}{\mathbb}  
\DeclareAlphabet{c}{\mathcal}
\DeclareAlphabet{r}{\mathrm}
\DeclareAlphabet{b}{\mathbf}


\newcommand{\ABS}[1]{{{\left\vert#1\right\vert}}} 
\let\abs\ABS
\newcommand{\NRM}[1]{{{\left\Vert#1\right\Vert}}} 
\newcommand{\PAR}[1]{{{\left(#1\right)}}} 
 %


\newcommand{\prb}[2][]{\dP_{#1}\left[#2\right]}
\newcommand{\esp}[2][]{\dE_{#1}\left[#2\right]}

\newcommand{\ind}[1]{\mathbf{1}_{#1}}

\newcommand{\defEgal}{\mathrel{\mathop:}=}
\newcommand{\egalDef}{=:}


\DeclareMathOperator{\Lip}{Lip}

\title{Convergence of metadynamics: discussion of the adiabatic hypothesis}

\author[1]{Benjamin Jourdain}
\author[1]{ Tony Lelièvre} %
\author[2]{Pierre-André Zitt} %
\affil[1]{CERMICS, \'Ecole des Ponts, Universit\'e  Paris-Est, INRIA, 77455 Marne-La-Vallée, France.}
\affil[2]{LAMA UMR 8050, CNRS-Universit\'e  Paris-Est-Marne-La-Vallée,
  77454 Marne-La-Vallée, France.}

\date{\today}

\begin{document}

\maketitle

\abstract{By drawing a parallel between metadynamics and self
  interacting models for polymers, we study the longtime convergence
  of the original metadynamics algorithm in the adiabatic setting,
  namely when the dynamics along the collective variables decouples
  from the dynamics along the other degrees of freedom. We
  also discuss the bias which is introduced when the
  adiabatic assumption does not holds.}


\section*{Introduction}
\addcontentsline{toc}{section}{Introduction}

The objective of this work is to discuss the long time properties of
the metadynamics algorithm~\cite{laio-parrinello-02}, by
exploiting in particular similarities between metadynamics and self
interacting stochastic models for polymers~\cite{TV11,tarres-toth-valko-12}.

The metadynamics algorithm consists in evolving a random walker and
adding a penalty over already
visited states in order to escape from metastable states. More precisely,
one considers a process $(Q_t)_{t \ge 0}$ following originally a
dynamics with invariant probability measure $\pi(dq)=\exp(-\beta V(q))
\, dq$ (where $V$ is the potential energy function and $\beta>0$ is the
inverse temperature). One is also given a so-called collective
variable $\xi$ which is a function of the position $q$ giving a low-dimension description of the state of the system (say for
simplicity that $\xi$ is with values in the one-dimensional torus). An important quantity related to the target distribution
$\pi$ and the collective variable $\xi$ is the so-called {\em free
energy}~\cite{lelievre-rousset-stoltz-book-10} defined by 
\begin{equation}\label{eq:FE}
\exp(-\beta F(z)) dz = \xi_\# \pi
\end{equation}
where $\xi_\# \pi$ denotes the image of the measure $\pi$ by $\xi$.
As the
walker $(Q_t)_{t \ge 0}$ evolves, the dynamics is modified in order to target at time $t$ a biased
measure $\pi_t$, where
\begin{equation}\label{eq:penalize}
\frac{d\pi_t}{d\pi}(q) \varpropto\exp\left(- \beta \int_0^t \delta^\epsilon
  \left(\xi(Q_s)-\xi(q)\right) \, ds\right).
\end{equation}
and $\delta^\epsilon$ is a smooth approximation of the Dirac delta
function. 
The aim of metadynamics is
twofold: (i) to converge more quickly to an equilibrium state than for
the original unbiased dynamics; (ii) to obtain in the long-time limit
an estimate of the free energy as the opposite of the penalty
term $\Psi_t:z \mapsto \int_0^t \delta^\epsilon
\left(\xi(Q_s)-z\right) \, ds$. The function $\Psi_t$ records the already
visited values of $\xi$: intuitively, one could think of $\Psi_t$ as
``computational sand filling the free energy wells''. Concerning the long-time convergence, if one keeps on penalizing the already
visited states with the same penalization strength all along the trajectory, as in~\eqref{eq:penalize}, the
penalty term $\Psi_t$ cannot converge. Two ideas have been proposed in the
literature to overcome this difficulty. The first idea is {\em averaging}, see for
example~\cite[Section 3.1]{laio-gervasio-08}: this consists
in considering the long-time limit of the time average of
the penalizing term:  $\frac{1}{t} \int_0^t \Psi_s(z) \, ds$. The
second idea is {\em vanishing adaption}: this consists in reducing the
penalization strength to zero in the long-time limit, namely add a
multiplicative factor $\alpha(s)$ in front of $\delta^\epsilon
  \left(\xi(Q_s)-\xi(q)\right)$ in~\eqref{eq:penalize}, where $\alpha(s)$
  goes to $0$ as $s \to \infty$. The so-called
{\em well-tempered metadynamics}~\cite{barducci-bussi-parrinello-08} is a version of metadynamics with a vanishing penalization
mechanism where $\alpha(s)$ scales as a constant over $s$. We refer
to~\cite{dama-parrinello-voth-14,fort-jourdain-lelievre-stoltz-17,fort-jourdain-lelievre-stoltz-18}
for convergence analysis of well-tempered metadynamics using tools from stochastic approximation
algorithms. The
difficulty of approaches with vanishing adaption is to decide at which rate the penalization
strength should decrease: if it vanishes too fast, the system may
remain stuck in a metastable state; if it vanishes too slowly, large
fluctuations of the penalizing term affect the accuracy of the
estimator of the free energy, see for example~\cite{fort-jourdain-lelievre-stoltz-18} for discussion.

We are interested here  in the averaging approach. This technique has been used
in particular in the context of biased exchange metadynamics, see for
example~\cite{ghaemi-minozzi-carloni-laio-12,baftizadeh-biarnes-pietrucci-affinito-laio-12}. As
will be explained below (see also~\cite {laio-gervasio-08}), the
averaging approach is biased in nature: $\frac{1}{t} \int_0^t
\Psi_s(z) \, ds$ will not converge in the long time limit to $-F(z)$
(up to an additive constant),
except in the so-called {\em adiabatic} case, namely when 
$(\xi(Q_s))_{s \ge 0}$ follows an autonomous Markov
dynamics~\cite{iannuzzi-laio-parrinello-03,bussi-laio-parrinello-06}. The
adiabatic case is equivalent to a
situation in which the original dynamics is with values in ${\rm
  Ran}(\xi)$ and directly samples $\exp(-\beta
F(z)) \, dz$, so that one can consider simply the identity as a
collective variable. The fact that the averaging approach is biased is
clearly a drawback, but one interest compared to a vanishing adaption
method (such as well-tempered
metadynamics) is that one can look at the way the penalization term
$\Psi_t$ evolves in time
in order to assess if the underlying adiabatic assumption is satisfied:
if hidden metastabilities have not been taken into
account in the choice of $\xi$, one observes some kind of periodic behavior
of $\Psi_t$ in time. This will be illustrated below on numerical examples.

The objective of this work is twofold. First, we present in
Section~\ref{sec:adiabatic} some theoretical results on the unbiasedness of the averaging
approach in the case of adiabatic separation, for two versions of
metadynamics: a dynamics in continuous state space
where we built on previous results
from~\cite{benaim-ciotir-gauthier-15,benaim-gauthier-17} and a
dynamics in discrete state space --- the proof of the
results in this case are based on~\cite{TV11} and given in Section~\ref{sec:proof_discrete}. 
 Second, we give
some theoretical and numerical evidence of the fact that the average of the penalty term yields
a biased estimator of minus the free energy for non adiabatic cases in
Section~\ref{sec:non_adiabatic}.

\section{Consistency of metadynamics with adiabatic
  separation}\label{sec:adiabatic}

In this section, we prove the convergence of the average in time of
the penalty term to minus the free energy, in the adiabatic case, on
prototypical dynamics in continuous state space (Section~\ref{sec:adiabatic_cont}) and
in discrete state space (Section~\ref{sec:adiabatic_disc})

\subsection{A model in continuous state space}\label{sec:adiabatic_cont}

\subsubsection{Convergence results}
The results of this section can be seen
as a formalization of the pioneering
work~\cite{bussi-laio-parrinello-06}, where a similar model is considered.

Let us first introduce the (non-penalized) one-dimensional overdamped Langevin
dynamics which samples $Z^{-1} \exp(-\beta F(z)) \, dz$:
\begin{equation}\label{eq:OL}
dZ^{OL}_t = - F'(Z^{OL}_t) \, dt + \sqrt{2 \beta^{-1}} dB_t,
\end{equation}
where $\beta > 0$ is the inverse temperature and $(B_t)_{t \ge 0}$ is
a Brownian motion.
Compared to the setting presented in the introduction, we
are in the case of a so-called adiabatic separation between the
dynamics on $Z^{OL}_t=\xi(Q_t)$ and the dynamics of the other degrees of
freedom: the dynamics on $Z^{OL}_t$ is closed and Markovian, and thus, the
drift in~\eqref{eq:OL} is minus the derivative of the
free energy $F$, and the invariant measure is then $Z^{-1} \exp(-\beta
F(z)) \, dz$.
This dynamics will typically remain trapped in free energy wells. As
explained above, the idea of metadynamics is to penalize already
visited states in order to accelerate the sampling. The metadynamics
associated with~\eqref{eq:OL} (for a collective variable which is the
identity since we are in the adiabatic setting) gives the evolution of the one-dimensional
stochastic process $Z_t$ and an associated function $\Psi_t$, defined
formally as follows:
\begin{equation}\label{eq:cont_metadynamics}
\left\{
\begin{aligned}
dZ_t &= - \left( F'(Z_t) +  \Psi_t' (Z_t)\right) \, dt + \sqrt{2 \beta^{-1}}
dB_t, \\
\frac{d \Psi_t(z) }{dt} &= \gamma \delta^\epsilon(Z_t-z),
\end{aligned}
\right.
\end{equation}
where $\Psi_t'=\frac{d \Psi_t}{dz}$ denotes the derivative with
respect to $z$, $\gamma >0$ is the so-called {\em deposition rate} and
$\delta^\epsilon$ is a smooth even approximation of the Dirac
function. The initial conditions are $(Z_0, \Psi_0)$ where $Z_0$ is any
random variable independent of the Brownian motion $(B_t)_{t \ge 0}$
and $\Psi_0=0$. To give a proper meaning
to~\eqref{eq:cont_metadynamics} is not obvious since this is an
infinite dimensional stochastic differential equation~\cite{da_prato-zabczyk-14,benaim-ciotir-gauthier-15}. One aim
of this section is actually to exhibit a proper setting where this is well
defined, using~\cite{benaim-ciotir-gauthier-15}. For the moment, we argue formally.

In order to
simplify the presentation and build on previous results in the
literature~\cite{tarres-toth-valko-12,benaim-ciotir-gauthier-15,benaim-gauthier-17}, we assume that $Z_t$ lives on the one-dimensional
torus $\mathbb T$ with period $2\pi$, which amounts to assuming that both $F$ and $\delta^\epsilon$
are periodic functions with period $2 \pi$. The question we would like
to address is the following: Can we prove that $\lim_{t \to \infty}
\frac{1}{t} \int_0^t \Psi_s = -F +C$ where $C$ is an irrelevant constant to be
made precise?

Let us now introduce
$$\Phi_t(z)=\Psi_t(z)+F(z) - \frac{1}{2\pi}
\int_{\mathbb T} F.$$
We add the constant $- \frac{1}{2\pi} \int_{\mathbb T} F$ so that
$\Phi_0$ has zero mean, which will be convenient below to identify the
equilibrium measure of the dynamics (notice that only the $z$-derivatives
of $\Psi_t$ and $\Phi_t$ appear in the dynamics).
The dynamics~\eqref{eq:cont_metadynamics} can be rewritten as follows:
\begin{equation}\label{eq:cont_metadynamics_phi}
\left\{
\begin{aligned}
dZ_t &= - \Phi_t' (Z_t) \, dt + \sqrt{2 \beta^{-1}}
dB_t, \\
\frac{d \Phi_t(z) }{dt} &= \gamma \delta^\epsilon(Z_t-z),
\end{aligned}
\right.
\end{equation}
with initial conditions $Z_0$ and $\Phi_0=F-\frac{1}{2\pi}
\int_{\mathbb T} F$.

In order to go further, let us now make
$\delta^\epsilon$ precise. Starting from the relation in distribution
sense (for smooth $2\pi$ periodic test functions) $\delta =
\frac{1}{2\pi} + \frac{1}{\pi} \sum_{k \ge 1} \cos(k \cdot)$,
a possible choice is to consider the
truncated Fourier expansion
\begin{equation}\label{eq:choice1}
\delta^\epsilon(z)=\sum_{k=1}^{N} \cos(kz).
\end{equation}
We omit the additive constant $1/(2\pi)$: this is useful in order to
keep the normalization $\int_{\mathbb T} \Phi_t$ to zero as time evolves and thus to obtain
a stationary measure for $\Phi_t$ (otherwise $\int_{\mathbb T} \Phi_t$
would evolve linearly in time). Moreover, we forget the multiplicative constant $1/\pi$
since it can be taken into account by modifying $\gamma$. The dynamics~\eqref{eq:cont_metadynamics_phi}
then writes:
\begin{equation}\label{eq:cont_metadynamics_phi_prime}
\left\{
\begin{aligned}
dZ_t &= - \Phi_t' (Z_t) \, dt + \sqrt{2 \beta^{-1}}
dB_t, \\
\frac{d \Phi_t(z) }{dt} &= \gamma\sum_{k=1}^N \cos(kZ_t) \cos (kz)  +
\gamma \sum_{k=1}^N 
  \sin(kZ_t) \sin(kz).
\end{aligned}
\right.
\end{equation}
It is then natural to introduce the Fourier series associated with
$\Phi_t$ (remember that for all $t\ge 0$, $\int_{\mathbb T} \Phi_t(z) \, dz = 0$):
$$\Phi_t(z)=\sum_{k=1}^\infty \alpha_t^k \cos(kz) +\sum_{k=1}^\infty
\beta_t^k\sin(kz)$$
where $\alpha_t^k = \frac{1}{\pi} \int_{\mathbb T} \Phi_t(y) \cos(ky)
\, dy$ and $\beta_t^k = \frac{1}{\pi} \int_{\mathbb T} \Phi_t(y) \sin(ky)
\, dy$. In the following, we use the notation $\alpha_t=(\alpha^1_t, \ldots,
\alpha^N_t)$ and  $\beta_t=(\beta^1_t, \ldots,
\beta^N_t)$. Let us define 
\begin{align*}
\Pi_{N}^\perp(\Phi_0)(z)&= \Phi_0(z) -  \left( \sum_{k=1}^N \alpha_0^k \cos(kz) +\sum_{k=1}^N
\beta_0^k\sin(kz)\right)=\sum_{k=N+1}^\infty \alpha_0^k \cos(kz) +\sum_{k=N+1}^\infty
\beta_0^k\sin(kz),
\end{align*}
the projection of $\Phi_0= F(z)-\frac{1}{2\pi} \int_{\mathbb T} F$ over the Fourier modes
$(\cos(k\cdot),\sin(k\cdot))_{k \ge N+1}$. Notice that since $\Phi_0=F-\frac{1}{2\pi} \int_{\mathbb T} F$,
$$\Pi_{N}^\perp(\Phi_0)=\Pi_{N}^\perp (F).$$

The dynamics~\eqref{eq:cont_metadynamics_phi_prime} can now be rewritten
rigorously as the following
finite-dimensional stochastic differential equation:
\begin{equation}\label{eq:cont_metadynamics_alphabeta}
  \left\{
    \begin{aligned}
      dZ_t &= \sum_{k=1}^N k \left(\alpha^k_t \sin( k Z_t) -
        \beta^k_t \cos(k Z_t)\right) \, dt -
      \Pi_{N}^\perp (F)'(Z_t) \, dt + \sqrt{2 \beta^{-1}} dB_t, \\
      d\alpha^k_t &= \gamma \cos(kZ_t) dt \text{ for all } k \in \{1, \ldots,
      N\},\\
      d\beta^k_t &=  \gamma \sin(kZ_t) dt \text{ for all } k \in \{1, \ldots,
      N\},
    \end{aligned}
  \right.
\end{equation}
with initial condition $Z_0$, and $(\alpha^k_0,\beta^k_0)_{k \in \{1, \ldots,
      N\}}$. Equation~\eqref{eq:cont_metadynamics_alphabeta} gives
    a proper meaning to the original
    dynamics~\eqref{eq:cont_metadynamics} for the case $\delta^\epsilon$
    is defined by~\eqref{eq:choice1}.

Using techniques from~\cite{dolbeault-mouhot-schmeiser-15}, the following result is proven in~\cite{benaim-gauthier-17}:
\begin{prop}\label{prop:BG17}
Assume that
\begin{equation}\label{eq:hypo}
\Pi_{N}^\perp (F)=0.
\end{equation}
Then, the Markov process~\eqref{eq:cont_metadynamics_alphabeta} is a
positive Harris process and admits a unique invariant probability distribution
given as
$$\mu(dz,d\alpha,d\beta)=\frac{dz}{2\pi}  \prod_{k=1}^N
\frac{k^2}{2\pi \gamma} \exp\left( -\frac{k \left( \alpha_k^2 +\beta_k^2\right)}{2 \gamma}\right)
d \alpha_k \, d \beta_k.$$
Moreover, one has the following ergodicity results
\begin{itemize}
\item for all $f \in L^1(\mu)$, $\lim_{t \to \infty} \frac{1}{t}
  \int_0^t f(Z_s,\alpha_s,\beta_s)=\int_{\mathbb T \times \mathbb R^N \times
    \mathbb R^N} f d\mu$
\item there exists $\lambda > 0$ such that, for all initial condition $y_0=(z_0,\alpha_0,\beta_0) \in \mathbb T \times \mathbb R^N \times
    \mathbb R^N$ there exists $C(y_0)>0$ such that, for all $ t\ge 1$,
\[\|P_t(y_0,dy) - \mu(dy)\|_{TV}
    \le C(y_0) \exp(-\lambda t),\]
 where $P_t(y_0,dy)$ denotes the law of
    $(Z_t, \alpha_t, \beta_t)$ following the
    dynamics~\eqref{eq:cont_metadynamics_alphabeta} starting from the
    initial condition $(Z_0, \alpha_0, \beta_0)=y_0$.
\end{itemize}
\end{prop}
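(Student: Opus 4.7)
Under the assumption $\Pi_N^\perp(F)=0$, the SDE~\eqref{eq:cont_metadynamics_alphabeta} has $C^\infty$ coefficients that are linear in $(\alpha,\beta)$ and trigonometric in $z$, so a standard Gronwall estimate on $\mathbb E[|\alpha_t|^2 + |\beta_t|^2]$ rules out explosion and gives strong existence and uniqueness, hence a Feller Markov semigroup $(P_t)$ on $\mathbb T \times \mathbb R^{2N}$. The plan is then to handle, in order, invariance of $\mu$, the positive Harris property, and the exponential rate.

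For invariance I would compute $L^{\ast} m$ directly with the ansatz $m(z,\alpha,\beta) = \frac{1}{2\pi}\prod_{k=1}^N C_k \exp(-c_k(\alpha_k^2+\beta_k^2))$: the diffusion contributes nothing since $m$ is $z$-independent, and requiring the $\cos(kz)$ and $\sin(kz)$ Fourier components of the remaining terms to vanish identifies the constants $c_k$ uniquely and fixes the $z$-marginal to be uniform. For Harris recurrence, although the diffusion is degenerate (noise only on $z$), brackets of $\partial_z$ with the drift vector field yield, at every point $(z,\alpha,\beta)$ and for every $k$, two linearly independent combinations in the $(\partial_{\alpha^k},\partial_{\beta^k})$-plane, coming from the pair $(\cos(kz),\sin(kz))$ and its derivative $(-\sin(kz),\cos(kz))$, which never collapse. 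Hörmander's condition is thus satisfied, giving smooth positive transition densities, and a Stroock-Varadhan controllability argument (steering the Brownian motion so that $Z_t$ follows any prescribed smooth path) shows that every nonempty open set is reached in positive time with positive probability from any initial point. Combined with the invariance of $\mu$ this yields uniqueness, the positive Harris property, and the $L^1(\mu)$ ergodic theorem.

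The exponential rate is the main obstacle, since the absence of Brownian noise in the $(\alpha,\beta)$ directions precludes any direct $L^2(\mu)$ Poincaré inequality. I would follow the hypocoercivity scheme of~\cite{dolbeault-mouhot-schmeiser-15}: split $L = B + T$ with $B = \beta^{-1}\partial_{zz}$ symmetric and $T$ antisymmetric on $L^2(\mu)$, then introduce a modified functional $\mathcal H(f) = \tfrac12 \|f\|_{L^2(\mu)}^2 + \eps \langle A f, f\rangle_{L^2(\mu)}$ where the auxiliary operator $A$ is built from commutators of $T$ with $\partial_z$ so as to restore coercivity in the missing $(\alpha,\beta)$-gradient directions. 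Checking the abstract microscopic and macroscopic coercivity conditions of~\cite{dolbeault-mouhot-schmeiser-15} against the Gaussian factors of $\mu$ yields, for $\eps>0$ small enough, exponential decay of $\mathcal H$ along the semigroup, hence an $L^2(\mu)$ spectral gap. Coupled with the Harris property and the smoothness of the transition kernel established above, a standard Meyn-Tweedie type argument then upgrades this spectral gap to the stated total-variation exponential decay with an initial-condition-dependent prefactor $C(y_0)$.
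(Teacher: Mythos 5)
The paper offers no proof of Proposition~\ref{prop:BG17}: it is quoted from~\cite{benaim-gauthier-17}, who apply the hypocoercivity method of~\cite{dolbeault-mouhot-schmeiser-15}. Your sketch correctly reconstructs that route --- explicit invariance via $L^*\rho=0$, Hörmander brackets for hypoellipticity and irreducibility, and a DMS decomposition $L=B+T$ with $B=\beta^{-1}\partial_{zz}^2$ symmetric and $T$ antisymmetric in $L^2(\mu)$. Incidentally, carrying your ansatz through identifies $c_k=k^2/(2\gamma)$, i.e.\ the exponent should read $k^2(\alpha_k^2+\beta_k^2)/(2\gamma)$ rather than $k(\alpha_k^2+\beta_k^2)/(2\gamma)$ as printed; this agrees with the stated normalisation $k^2/(2\pi\gamma)$ and is evidently a typo in the statement. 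Note also that antisymmetry of $T$ is not free: the divergence of $T$ with respect to $\mu$ vanishes only because the $\partial_z b_z$ term is exactly cancelled by the Gaussian weights --- the same cancellation that makes $\mu$ invariant.

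Two steps need more care. In the Hörmander argument, the single bracket $[\partial_z,Y]$ contributes only one direction per plane, proportional to $(-\sin kz,\cos kz)$ in $(\partial_{\alpha^k},\partial_{\beta^k})$-coordinates; the complementary direction comes from $Y$ itself (after subtracting its $\partial_z$-component) and, crucially, separating modes of different wavenumber $k$ requires the whole family of iterated brackets $[\partial_z,[\partial_z,\ldots,Y]]$, whose $(\alpha_k,\beta_k)$-components scale as $k^j$, together with a Vandermonde argument in $k$. More substantively, Hörmander hypoellipticity plus Stroock--Varadhan controllability gives smooth positive transition densities and open-set irreducibility, but \emph{not} recurrence: a process can be irreducible yet transient, and your argument asserts positive Harris recurrence before establishing that $(\alpha_t,\beta_t)$ does not wander off. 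The confinement mechanism here is precisely the feedback encoded in the Gaussian factors of $\mu$, which enters through the macroscopic-coercivity condition of~\cite{dolbeault-mouhot-schmeiser-15}; so either derive the $L^2(\mu)$ spectral gap first and then read off uniqueness and recurrence, or exhibit a Lyapunov function separately. Finally, the upgrade from the $L^2$ gap to $\|P_t(y_0,\cdot)-\mu\|_{TV}\le C(y_0)e^{-\lambda t}$ rests on the hypoelliptic time-$1$ smoothing placing $P_1(y_0,\cdot)$ in $L^2(\mu)$, with $C(y_0)$ absorbing its chi-square divergence from $\mu$; naming this mechanism would make the appeal to a ``Meyn--Tweedie type argument'' precise.
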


As a consequence, one obtains the following corollary.
\begin{cor}
Let us consider the dynamics~\eqref{eq:cont_metadynamics} with the
choice~\eqref{eq:choice1} for $\delta^\epsilon$. Under the
assumption~\eqref{eq:hypo}, one has
$$\lim_{t \to \infty} \frac{1}{t} \int_0^t \Psi_s ds = -F +
\frac{1}{2\pi} \int_{\mathbb T} F.$$
\end{cor}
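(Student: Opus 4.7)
The plan is to reduce the claim to an ergodic average of the finite-dimensional Markov process~\eqref{eq:cont_metadynamics_alphabeta} and then to apply directly Proposition~\ref{prop:BG17}.

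First I would unwind the change of unknown $\Phi_t = \Psi_t + F - \frac{1}{2\pi}\int_{\mathbb T} F$ so that
\[
\frac{1}{t}\int_0^t \Psi_s\, ds = \frac{1}{t}\int_0^t \Phi_s\, ds - F + \frac{1}{2\pi}\int_{\mathbb T} F,
\]
and the task becomes to show that $\frac{1}{t}\int_0^t \Phi_s\, ds$ converges to $0$.

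The key structural observation is that the hypothesis~\eqref{eq:hypo}, $\Pi_N^\perp(F)=0$, makes $\Phi_0 = F - \frac{1}{2\pi}\int_{\mathbb T} F$ a trigonometric polynomial with only modes $k\in\{1,\dots,N\}$. Since the second equation of~\eqref{eq:cont_metadynamics_phi_prime} only adds contributions from the modes $\cos(kz),\sin(kz)$ with $1\le k\le N$, the process $\Phi_t$ keeps this finite-mode structure for every $t\ge 0$, and one can write rigorously
\[
\Phi_t(z) = \sum_{k=1}^N \bigl(\alpha_t^k \cos(kz) + \beta_t^k \sin(kz)\bigr),
\]
where the coefficients $(\alpha_t^k,\beta_t^k)$ are exactly those driven by~\eqref{eq:cont_metadynamics_alphabeta}. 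In particular, also the drift term $\Pi_N^\perp(F)'(Z_t)$ appearing in the $Z_t$ equation of~\eqref{eq:cont_metadynamics_alphabeta} vanishes, so Proposition~\ref{prop:BG17} applies.

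It then remains to pass the time average through the finite sum. For each $k\in\{1,\dots,N\}$, the coordinate functions $(z,\alpha,\beta)\mapsto \alpha^k$ and $(z,\alpha,\beta)\mapsto \beta^k$ belong to $L^1(\mu)$ because the $\alpha^k,\beta^k$ marginals of $\mu$ are centered Gaussians; moreover their $\mu$-means are zero by symmetry. The first ergodic statement of Proposition~\ref{prop:BG17} therefore gives
\[
\lim_{t\to\infty}\frac{1}{t}\int_0^t \alpha_s^k\, ds = 0, \qquad \lim_{t\to\infty}\frac{1}{t}\int_0^t \beta_s^k\, ds = 0,
\]
for every $k\le N$. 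Summing the $2N$ modes with weights $\cos(kz)$, $\sin(kz)$ yields $\frac{1}{t}\int_0^t \Phi_s(z)\, ds \to 0$ (the convergence is even uniform in $z$ since only finitely many modes are involved), which combined with the first display gives the claimed limit $-F+\frac{1}{2\pi}\int_{\mathbb T} F$.

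There is no substantial obstacle here: the whole content is carried by Proposition~\ref{prop:BG17}, and the only point requiring care is the verification that the hypothesis $\Pi_N^\perp(F)=0$ indeed collapses $\Phi_t$ onto the first $N$ Fourier modes so that the ergodicity result directly applies mode by mode.
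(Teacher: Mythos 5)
Your proof is correct and is exactly the straightforward deduction the paper has in mind (the paper omits the argument, writing only ``As a consequence, one obtains the following corollary''). You correctly observe that \eqref{eq:hypo} makes $\Phi_0$ a trigonometric polynomial of degree $\le N$, that the dynamics preserves this finite-mode structure, and that the ergodic average of each $\alpha^k$, $\beta^k$ vanishes by the first item of Proposition~\ref{prop:BG17} since these coordinates are centered Gaussians under $\mu$.
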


It would be interesting to understand what can be said on the long-time
behavior of~\eqref{eq:cont_metadynamics_alphabeta} without the assumption~\eqref{eq:hypo}.

\subsubsection{Discussion and generalization}

\paragraph{On the diffusion constant.}
As explained in~\cite{benaim-gauthier-17}, the results of
Proposition~\ref{prop:BG17} hold whatever
the diffusion constant: $\sqrt{2\beta^{-1}}$ can be replaced by any positive
$\sigma$ in~\eqref{eq:cont_metadynamics}, and one still obtains the same limiting behavior. Moreover,
 for $\sigma=0$ (which actually corresponds to the original metadynamics
algorithm in~\cite{laio-parrinello-02}), the measure $\mu$ is still invariant for the
deterministic dynamical system, but it is not necessarily unique. We
refer to~\cite[Theorem 3]{benaim-gauthier-17} where the authors
identify infinitely many ergodic measures in the case $N=1$.

\paragraph{More general geometrical setting.}
We presented the results in Proposition~\ref{prop:BG17}  on the one-dimensional torus for simplicity,
but they actually hold for 
$Z_t$ with values in any compact Riemanian manifold, the functions
$\cos(kz)$ and $\sin(kz)$ being replaced by the
eigenfunctions of the Laplace Beltrami operator on the manifold (see~\cite{benaim-gauthier-17}).



\paragraph{On the infinite dimensional setting.}
An infinite-dimensional version
of the results of~\cite{benaim-gauthier-17} is given in~\cite{benaim-ciotir-gauthier-15}. In our context,
it can be translated as follows. Let us consider again the
dynamics~\eqref{eq:cont_metadynamics_phi}, with a function 
\begin{equation}\label{eq:choice2}
\delta^\epsilon(z)=\sum_{k=1}^\infty a_k \cos(kz)
\end{equation}
for some positive sequence $(a_k)_{k \ge 1}$ such that
$\sum_{k=1}^\infty (1+k^2)^5 a_k^2 < \infty$ (compare with~\eqref{eq:choice1}). For example, if
$\delta^\epsilon$ is a periodic function in the Sobolev space
$H^5(\mathbb T)$, then those properties are satisfied. As above, let us introduce the Fourier
series for~$\Phi_t$:
$$\Phi_t(z) = \sum_{k=1}^\infty \alpha_t^k \cos(kz) +
\sum_{k=1}^\infty \beta_t^k \sin(kz).$$
In this context, the dynamics~\eqref{eq:cont_metadynamics_alphabeta}
can be rewritten in the following form:
\begin{equation}\label{eq:cont_metadynamics_alphabeta_2}
  \left\{
    \begin{aligned}
      dZ_t &= \sum_{k=1}^\infty k \left(-\alpha^k_t (-\sin( k Z_t)) -
        \beta^k_t \cos(k Z_t)\right) \, dt + \sqrt{2 \beta^{-1}} dB_t, \\
      d\alpha^k_t &= \gamma a_k \cos(kZ_t) dt \text{ for all } k \ge 1,\\
      d\beta^k_t &=  \gamma a_k \sin(kZ_t) dt \text{ for all } k \ge 1.
    \end{aligned}
  \right.
\end{equation}
Let us assume
moreover that the function $F$ is such that:
\begin{equation}
\Phi_0(z)=F(z) - \frac{1}{2\pi} \int_{\mathbb T} F = \sum_{k = 1}^\infty
\alpha^k_0 \cos(kz) + \sum_{k = 1}^\infty
\beta^k_0 \sin(kz)
\end{equation}
for some sequences $(\alpha^k_0)_{k \ge 1}$ and $(\beta^k_0)_{k \ge
  1}$ such that
\begin{equation}\label{eq:hypo2}
(\alpha^k_0)_{k \ge 1} \in H \text{ and } (\beta^k_0)_{k \ge 1} \in H \text{ where } H=\left\{ (u_k)_{k \ge 1}, \,  \sum_{k = 1}^\infty \frac{u_k^2}{a_k} < \infty \right\}
\end{equation}
which can be seen as a generalization of the
condition~\eqref{eq:hypo}. The set of real-valued sequences $H$ is an
Hilbert space when endowed with the scalar product $\langle u, v
\rangle=\sum_{k = 1}^\infty \frac{u_k v_k}{a_k} $.
 Then, using an approach based on cylindrical processes and
 cylindrical measures
 associated with $H$, it can be shown that there exists a
unique strong
solution to~\eqref{eq:cont_metadynamics_alphabeta_2} such that $(\alpha^k_t)_{k \ge 1} \in
H$ and $(\beta^k_t)_{k \ge 1} \in H$ (see~\cite[Proposition
1]{benaim-ciotir-gauthier-15}), which admits as an invariant measure (see~\cite[Proposition
3]{benaim-ciotir-gauthier-15}):
$$\mu(dz,d\alpha,d\beta)=\frac{dz}{2\pi}  \prod_{k=1}^\infty
\frac{k^2}{2\pi \gamma} \exp\left( -\frac{k \left( \alpha_k^2 +\beta_k^2\right)}{2 \gamma}\right)
d \alpha_k \, d \beta_k.$$
However, in this infinite dimensional setting, it is unclear whether
the invariant measure is unique, and if the dynamics is ergodic with
respect to $\mu$ (see the discussion in~\cite[Section 7]{benaim-ciotir-gauthier-15}).

\paragraph{Enforcing the adiabatic separation.}
In the literature on metadynamics~\cite{iannuzzi-laio-parrinello-03},
some authors notice that the adiabatic separation may be
enforced by considering an extended system $(Q_t,Z_t)$ with potential energy
$V(q)+\frac{k}{2}(\xi(q) - z)^2$, and a Langevin dynamics where the
mass of the extended variable $Z_t$ is chosen sufficiently large,
see~\cite[Section 1.1]{laio-gervasio-08}. One then chooses
$\tilde{\xi}(q,z)=z$ as a reaction coordinate. Let us explain this
formally.

Let us consider the Langevin dynamics over the extended system, using
as a friction parameter on the additional variable $\gamma M^{1/2}$:
$$
\left\{
\begin{aligned}
dq_t&=p_t dt \\
dp_t&=-\nabla V(q_t) \, dt - k (\xi(q_t) - z_t) \nabla \xi(q_t) \, dt
- p_t \, dt + \sqrt{2 \beta^{-1}} dB_t \\
dz_t&= M^{-1} r_t dt\\
dr_t&= - k (z_t - \xi(q_t)) \, dt - \Psi_t'(z_t) \, dt - \gamma M^{-1/2} r_t \, dt +
\sqrt{2\gamma M^{1/2} \beta^{-1}} d\tilde{B}_t\\
\frac{d \Psi_t(z)}{dt}&=\gamma \delta^\epsilon(z_t -z).
\end{aligned}
\right.
$$
We proceed now in three steps. First, we use an averaging
principle~\cite{pavliotis-stuart-08}, by sending $M$ to $\infty$ after
changing variable $\tilde{r}_t=M^{-1/2}r_t$ and accelerating time by the
multiplicative factor $M^{1/2} $. One then obtains:
$$
\left\{
\begin{aligned}
dz_t&= \tilde{r}_t dt\\
d \tilde{r}_t&= - \tilde F'(z_t) \, dt - \Psi_t'(z_t) \,dt - \gamma
\tilde{r}_t \, dt + \sqrt{2\gamma \beta^{-1}} d\tilde{B}_t \\
\frac{d \Psi_t(z)}{dt}&=\gamma \delta^\epsilon(z_t -z)
\end{aligned}
\right.
$$
where
$$\tilde{F}(z)=- \beta^{-1} \ln \int \exp\left(-\beta V(q) - \beta
  \frac{k}{2}(\xi(q)-z)^2\right) \, dq.$$
Second, we consider the overdamped limit (see for
example~\cite[Section 2.2.4]{lelievre-rousset-stoltz-book-10}) by
sending $\gamma$ to $\infty$ after accelerating time by  the
multiplicative factor $\gamma$. One then obtains:
$$
\left\{
\begin{aligned}
d z_t&= - \tilde F'(z_t) \, dt - \Psi_t'(z_t) \,dt + \sqrt{2 \beta^{-1}} d\tilde{B}_t \\
\frac{d \Psi_t(z)}{dt}&=\gamma \delta^\epsilon(z_t -z).
\end{aligned}
\right.
$$
Finally, one easily checks that $\lim_{k \to \infty}
\tilde{F}(z)=F(z)$ (up to an irrelevant additive constant), where $F$ is the free energy defined
by~\eqref{eq:FE}, and one thus obtains formally the dynamics~\eqref{eq:cont_metadynamics}.


\subsection{A model in discrete space}\label{sec:adiabatic_disc}

We now introduce a
similar model by replacing the continuous circle by a discrete line.

\subsubsection{The model}\label{sec:the_model}
Let $K$ be a positive integer, and let $A:\{0,\ldots,K\}\to \dR$ be a
free energy 
profile. A good analogue of the overdamped Langevin process~\eqref{eq:OL}
in such a discrete setting is the continuous time, nearest neighbour Markov chain
on $\{0,\ldots,K\}$ with generator: for any test function $f:
\{0,\ldots,K\}\to \dR$, for any $k \in \{0,\ldots,K\}$,
\begin{equation}\label{eq:chaine_naive}
\begin{aligned}
  L^{OL}_{disc}f(k) =&  1_{\{k<K\}} \exp(-\beta (A_{k+1}-A_k)) (f(x,k+1)- f(x,k)) \\
        &   + 1_{\{k>0\}} \exp(-\beta (A_{k-1}-A_k)) (f(x,k-1) - f(x,k)). 
\end{aligned}
\end{equation}
It is easily seen that this Markov chain is reversible with respect to
the probability measure $Z^{-1} \exp(-2 \beta A_k)$ on $\{0,\ldots,K\}$ (and thus $2A_k$ plays the
role of the free energy $F$ in the continuous setting). 

To define a process that corresponds to the metadynamics algorithm, we introduce 
the local time at each site and we modify the jump dynamics so that the 
process is repelled by the sites where it has spent long times. 
More precisely, we define a process $(X_t,I_t)$ living in 
$\dR^K\times\{0, \ldots, K\}$.  The $I_t$ component describes the current position of a
walker on the discrete segment $\{0, \ldots, K\}$. The walker walks in a
potential given by the sum of a fixed landscape $A$ and a multiple of 
its own occupation
measure.  The continuous variable $X_t$ encodes, up to a multiplicative factor,
the \emph{differences} in occupation times for the process between neighbouring sites: denoting
by $L_t(k) = \int_0^t \ind{I_s = k} ds$ the local time at $t$ in $k$,
$X_t(k) = \gamma(L_t(k) - L_t(k-1))$. Similarly for $k\in\{1,\ldots,K\}$ let $$A'_k=
A_k - A_{k-1}.$$

The (adiabatic) metadynamics in discrete state space is the process defined by the following generator: for any test
function $f:\dR^K \times \{0,\ldots,K\} \to \dR$,
for all $ x=(x_1,\ldots,x_K)\in\dR^K$ and $k \in \{0,\ldots,K\}$, 
\begin{equation}\label{eq:generator_metadynamics}
  \begin{split}
    Lf(x,k) =& \gamma\PAR{1_{\{k>0\}}\partial_{x_k} f(x,k) - 1_{\{k<K\}} \partial_{x_{k+1}} f(x,k)} \\
  & + 1_{\{k<K\}} \exp(-\beta (x_{k+1}+A'_{k+1})) (f(x,k+1)- f(x,k)) \\
  & + 1_{\{k>0\}} \exp(\beta (x_k+A'_k)) (f(x,k-1) - f(x,k)).
				  \end{split}
\end{equation}
In plain words, while the walker $I_t$ stays in $k$, $X_t(k)$ increases at speed $\gamma$ and
$X_t(k+1)$ decreases at speed $\gamma$; the walker jumps to $k-1$ at rate
$\exp(\beta (x_k+A'_k))$ (except if it is at site $k=0$) and to $k+1$ at a rate $\exp(-\beta
(x_{k+1}+A'_{k+1}))$ (except if it is at site $k=K$). As in the continuous setting of
Section~\ref{sec:adiabatic_cont}, this corresponds to a dynamics in a
simplified setting (adiabatic dynamics) since there is no collective
variable involved here. We will discuss in Section~\ref{sec:non_adiabatic_disc} the
non-adiabatic version of this dynamics in discrete state space.

Note that, if the $X$ variable is frozen to the value $x=(x_1,\ldots,x_K)$,
the resulting jump process on $I$ is the naïve walk defined 
by~\eqref{eq:chaine_naive} in a potential $A_k+\gamma l_k$, where
$(l_k)$ satisfies $\gamma(l_k - l_{k-1}) = x_k$, which admits as an invariant measure $ Z_{x,A}^{-1} \exp( -
2\beta(A_k + \gamma l_k))$. 

We remark also that, for $t>0$, since $\forall k\in\{0,\hdots,K\}$, $\forall s\in[0,t]$,
$X_0(k)-\gamma t\le X_s(k)\le X_0(k)+\gamma t$, the jump rates are bounded on
the interval $[0,t]$ and there is no accumulation of jumps on this interval.
\begin{rem}[Related processes]
\label{rem:pdmp}
  This process interacts with its past \emph{via} its occupation measure which
  repels it.  Among the many variations on self-interacting processes (see the
  survey by Pemantle~\cite{Pem07}), it belongs to the family of so-called "true"
  or "myopic" self-avoiding walks. In particular, in  a flat potential ($A'_k=0$)
  and $\gamma=1$,  our model is exactly the same
  as the one studied  in  \cite{TV11}, except that we replace $\dZ$ with a finite
  set $\{0,\ldots,K\}$.   The main question addressed in \cite{TV11} is to prove
  scaling limits for the range of the walk and its position. We refer to the
  introduction of~\cite{TV11} for additional references on these models. 

 The process $(X_t,I_t)$ also fits the framework of  switching vector fields, a
 subclass of "Piecewise Deterministic Markov Processes" (PDMP) or  "hybrid
 processes" studied in particular in \cite{BH12,BLMZ12,BLMZ15} (see
 the reviews \cite{Mal14,ABK14} for more references on recent works on PDMPs). 
 Indeed, define the (constant) vector fields $(F_k)_{0\leq k\leq K}$ by
 $F_0 = (-\gamma,0,\ldots,0)$, $F_1 = (\gamma,-\gamma,0,\ldots,0)$, ..., 
 $F_{K-1} = (0,\ldots,0,\gamma,-\gamma)$, $F_K = (0,\ldots,0,\gamma)$.
 Then while $I_t = k$, $X_t$ follows the flow of $F_k$. 
 This will be used below to justify that the distribution of $X_t$ is nice
 enough. 
\end{rem}

\subsubsection{Longtime convergence results}

\begin{prop}[Invariant measure]\label{prop:mu}
  For $y\in\dR$ let $g_k(y)= \frac{2}{\beta\gamma} \cosh(\beta (y+A'_k))$. 
 Let  $m$ be the measure on $\dR^K$ defined by $dm(x) = dm(x_1,\ldots,x_K) =\exp\left(-\sum_{k=1}^Kg_k(x_k)\right) dx_1\cdots dx_K$. 
  The  probability  measure $\mu$ on $\dR^K \times \{0,\ldots,K\}$ defined by
\[
  \int f(x,k) d\mu(x,k) = \frac{1}{K+1} \sum_{0\leq k \leq K} \frac{1}{m(\dR^K)}  \int_{\dR^K} f(x,k) dm(x),
\]
is invariant for the Markov process
$(X_t,I_t)_{t \ge 0}$ with generator $L$ defined by~\eqref{eq:generator_metadynamics}. 
\end{prop}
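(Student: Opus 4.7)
The plan is to verify the weak invariance identity $\int Lf\, d\mu = 0$ on a sufficiently rich class of test functions $f(x,k)$ --- say bounded and $C^1$ in $x$ with bounded partial derivatives, a class which in view of the PDMP structure recalled in Remark~\ref{rem:pdmp} is a core for $L$. I would split $L = D + J$, where
\[
  Df(x,k) = \gamma\bigl(1_{\{k>0\}}\partial_{x_k}f(x,k) - 1_{\{k<K\}}\partial_{x_{k+1}}f(x,k)\bigr)
\]
is the drift part and $J$ collects the two jump terms, and write $\rho(x) = \exp\bigl(-\sum_{j=1}^K g_j(x_j)\bigr)/m(\dR^K)$ for the $x$-marginal density. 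Since $g_k(y) \sim \frac{1}{\beta\gamma}e^{\beta|y|}$ at infinity, $\rho$ decays super-exponentially in each variable, so all boundary terms in the integrations by parts below vanish.

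For the drift contribution, integrating by parts in $x_k$ (resp.\ $x_{k+1}$) using $\partial_{x_k}\rho = -g_k'(x_k)\rho$ gives
\[
  \int_{\dR^K} Df(x,k)\,\rho(x)\,dx = \gamma\int_{\dR^K} f(x,k)\bigl(1_{\{k>0\}}g_k'(x_k) - 1_{\{k<K\}}g_{k+1}'(x_{k+1})\bigr)\rho(x)\,dx.
\]
The crucial identity --- and the reason for the specific $\cosh$ shape of $g_k$ --- is
\[
  \gamma g_k'(y) = 2\sinh(\beta(y+A'_k)) = e^{\beta(y+A'_k)} - e^{-\beta(y+A'_k)},
\]
so the drift contribution is expressed precisely as a difference of the exponential jump rates that appear in $J$.

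For the jump side, the sum $\sum_k \int Jf(x,k)\rho\,dx$ splits into four pieces; I would reindex two of them by the shifts $k\mapsto k-1$ in $\sum_{k=0}^{K-1}\int e^{-\beta(x_{k+1}+A'_{k+1})}f(x,k+1)\rho\,dx$ and $k\mapsto k+1$ in $\sum_{k=1}^{K}\int e^{\beta(x_k+A'_k)}f(x,k-1)\rho\,dx$, so that each term is evaluated against $f(x,k)$ for the same index $k$. After collecting coefficients of each $f(x,k)$, the total jump integrand becomes $-\gamma\bigl(1_{\{k>0\}}g_k'(x_k) - 1_{\{k<K\}}g_{k+1}'(x_{k+1})\bigr)\rho(x)$ thanks to the identity above; this is exactly the opposite of the drift integrand, so the two cancel and $\int Lf\, d\mu = 0$ after dividing by $(K+1)m(\dR^K)$.

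The whole proof is thus a direct computation: the potential $g_k$ is \emph{engineered} so that $\gamma g_k'$ reproduces the difference between upward and downward jump rates at site $k$, and the reindexing aligns those rates with the correct boundary indicators $1_{\{k>0\}}$ and $1_{\{k<K\}}$. The only non-routine step is checking that $C^1_b$ functions in the $x$-variable form a core for $L$, which follows standardly from the switching-vector-field interpretation recalled in Remark~\ref{rem:pdmp}.
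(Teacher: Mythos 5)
Your proof is correct and follows essentially the same route as the paper's: integrate the drift terms by parts against $\rho$, shift indices in the jump sums so both are evaluated against $f(x,k)$, and observe that $\gamma g_k'(y) = e^{\beta(y+A'_k)} - e^{-\beta(y+A'_k)}$ makes everything cancel. The paper merely uses compactly supported smooth test functions and does not comment on the core issue you raise; otherwise the computations are identical.
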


\begin{rem}
The same model may be defined on the discrete circle, allowing for jumps between $K$ and $0$
in both directions. 
Letting $X_t(0) = L_t(0) - L_t(K)$, the computations are essentially the 
same as before, the invariant measure is given by a similar formula (note however that
it lives on the subspace $\sum X_t(k) = 0$). 
A key technical result used below (the Ray-Knight decomposition) is unfortunately
unavailable in this case. However, we believe our main results should still
hold by comparison arguments: informally speaking, the process does not see
that it lives in a circle until it has essentially filled all the potential
wells. 
\end{rem}

\begin{rem}[Link between the continuous model~\eqref{eq:cont_metadynamics} and the
  discrete model~\eqref{eq:generator_metadynamics}] 
To elucidate the link between~\eqref{eq:cont_metadynamics}
and~\eqref{eq:generator_metadynamics}, let us perform a change of
time on~\eqref{eq:cont_metadynamics} by changing $t$ to $\beta t$ so
that~\eqref{eq:cont_metadynamics} rewrites as follows (keeping the same notation
for the time-changed processes):
\begin{equation}\label{eq:cont_metadynamics_time_change}
\left\{
\begin{aligned}
dZ_t &= - \beta \left( F'(Z_t) +  \Psi_t' (Z_t)\right) \, dt + \sqrt{2}
dB_t, \\
\frac{d \Psi_t(z) }{dt} &= \beta \gamma \delta^\epsilon(Z_t-z).
\end{aligned}
\right.
\end{equation}
One can now identify the quantities involved in the discrete model
with the quantities involved in the continuous model as follows: $2A_k$ with $F(z)$, $I_t$ with $Z_t$, $2L_t$ with $\Psi_t$, $2X_t$
with $\Psi_t'$. To make the comparison complete, one would then need
to replace the deposition rate $\gamma_d$ for the discrete
model by $\gamma_c = \beta \gamma_d $. The reason why we used slightly different
notation for the discrete model than the standard notations for
metadynamics is because we would like to make the connection with the
fundamental paper~\cite{TV11} easier.
Notice that with this link drawn between the discrete and continuous
models, the invariant measures are  consistant: they are product
measures, with marginals the uniform law for $I_t$ and $Z_t$, and a
white noise for $X_t$ and $\psi_t'$. Moreover, one observes that the
potential function of the invariant measure for the $k$-th component
of $2 (X_t+A'_k)$ is then $-\frac{1}{\beta^2 \gamma} \cosh( \beta y /
2)$ which indeed compares well with the potential function of the
invariant measure for the Fourier components of $\Psi_t$ which are
$-y^2/ (2 \gamma k^{-2})$ in the regime $y \to 0$.
\end{rem}

The following invariance properties are easy to check. 
\begin{prop}[Identities in distribution]
  \label{thm:invariance}
  The processes for various values of the parameters are linked as follows: 
  \begin{enumerate}
  	\item 
  Let $(X_t,I_t)_{t \ge 0}$ start from $(X_0,I_0)=(0,i_0)$ in the landscape $A$
  (namely with generator~\eqref{eq:generator_metadynamics}). 
  Let $(Y_t,J_t)_{t \ge 0}$ start from $(Y_0,J_0)=(A',i_0)$ in a ``flat
  landscape'' (namely with generator~\eqref{eq:generator_metadynamics}
  with $A'=0$).  Then $(X_t+A',I_t)_{t \ge 0}$ and $(Y_t,J_t)_{t \ge 0}$ have the same distribution. 
\item\label{gamma1}
  Let $(X_t,I_t)_{t \ge 0}$ be the process in the landscape $A$ at inverse temperature
  $\beta$ and deposition rate $\gamma$. Then $(\gamma^{-1}X_t,I_t) _{t \ge 0}$ has the
  same distribution as the process in the landscape $\gamma^{-1}A$, at 
  inverse temperature $\beta\gamma$ and deposition rate $1$. 
  \end{enumerate}
\end{prop}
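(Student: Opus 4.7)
Both identities reduce to checking that the push-forward of the generator $L$ under the claimed change of variables coincides, on smooth cylindrical test functions, with the generator of the target process, and that initial data match. Since $(X_t, I_t)$ is a (piecewise deterministic) Markov process whose law is characterized by $L$ and its initial distribution (cf.\ the PDMP framework recalled in Remark~\ref{rem:pdmp}), this is enough.

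\textbf{Step 1: part (1).} Let $\Phi(x, k) = (x + A', k)$, and for a test function $f : \dR^K \times \{0, \ldots, K\} \to \dR$ set $g = f \circ \Phi$, so that $\partial_{x_k} g(x, k) = \partial_{y_k} f(x + A', k)$ and $g(x, k \pm 1) - g(x, k) = f(x + A', k \pm 1) - f(x + A', k)$. Substituting into~\eqref{eq:generator_metadynamics} and writing $y = x + A'$, the shifts $A'_{k+1}$ and $A'_k$ in the exponentials are absorbed into $y_{k+1}$ and $y_k$ respectively, so that $Lg(x, k)$ equals the generator~\eqref{eq:generator_metadynamics} with $A' \equiv 0$ applied to $f$ at $(y, k)$. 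Combined with $(X_0 + A', I_0) = (A', i_0) = (Y_0, J_0)$, this gives the identity in law.

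\textbf{Step 2: part (2).} Write the parameters of the second process as $\tilde\gamma = 1$, $\tilde\beta = \beta\gamma$, $\tilde A = \gamma^{-1} A$ (so $\tilde A'_k = \gamma^{-1} A'_k$). Apply the scaling $\Psi(x, k) = (\gamma^{-1} x, k)$. For $g(x, k) = f(\gamma^{-1} x, k)$ one has $\gamma\,\partial_{x_k} g(x, k) = \partial_{\tilde x_k} f(\gamma^{-1} x, k)$, which replaces the prefactor $\gamma$ in the drift term of~\eqref{eq:generator_metadynamics} by $\tilde\gamma = 1$. In the jump rates, $\exp(-\beta(x_{k+1} + A'_{k+1}))$ becomes $\exp(-\beta\gamma(\tilde x_{k+1} + \gamma^{-1} A'_{k+1})) = \exp(-\tilde\beta(\tilde x_{k+1} + \tilde A'_{k+1}))$, and symmetrically for the left jumps. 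Thus $Lg(x, k)$ equals the generator~\eqref{eq:generator_metadynamics} with parameters $(\tilde\gamma, \tilde\beta, \tilde A)$ applied to $f$ at $(\gamma^{-1} x, k)$. The initial conditions match trivially, and uniqueness in law for the generator yields the claimed equality.

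\textbf{Expected difficulty.} Both statements are ``change of variable'' identities that, once the generator is written explicitly as in~\eqref{eq:generator_metadynamics}, reduce to bookkeeping on the exponents and the drift prefactor; there is no real analytic obstacle. The only point that deserves a brief justification is that $L$ indeed characterizes the law, which follows because $X$ moves with bounded velocity on any finite time interval (noted just before Remark~\ref{rem:pdmp}), so $(X_t, I_t)$ is a well-defined PDMP in the sense of~\cite{BH12,BLMZ12,BLMZ15} and martingale problem well-posedness applies to cylindrical test functions $f \in C^1_c(\dR^K) \otimes \dR^{K+1}$.
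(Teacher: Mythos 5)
Your proof is correct. The paper itself does not give a proof of Proposition~\ref{thm:invariance} (it merely says "The following invariance properties are easy to check"), and the generator pushforward computation you carry out is exactly the natural verification: for part~(1) the translation by $A'$ absorbs the $A'_k$ terms in the jump rates while leaving the drift untouched; for part~(2) the dilation by $\gamma^{-1}$ cancels the drift prefactor $\gamma$ while the jump rates combine $\beta\gamma$ and $\gamma^{-1}A'$ exactly as required. Your closing remark that the generator together with bounded-velocity/PDMP well-posedness characterizes the law is the right justification for concluding the equality in distribution.
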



Proposition~\ref{thm:invariance} ensures that it is
enough to consider the case $A'_k=0$ and $\gamma=1$. In the following,
we thus consider the process $(X_t,I_t)_{t \ge 0}$, starting
from an initial distribution $(X_0,I_0) \sim \nu$
and with generator  defined by:
 for any
function $f:\dR^K \times \{0,\ldots,K\} \to \dR$,
for all $ x=(x_1,\ldots,x_K)\in\dR^K$ and $k \in \{0,\ldots,K\}$, 
\begin{equation}\label{eq:generator_metadynamics_flat}
  \begin{split}
    Lf(x,k) =& 1_{\{k>0\}}\partial_{x_k} f(x,k) - 1_{\{k<K\}} \partial_{x_{k+1}} f(x,k) \\
  & + 1_{\{k<K\}} \exp(-\beta x_{k+1}) (f(x,k+1)- f(x,k))  + 1_{\{k>0\}} \exp(\beta x_k) (f(x,k-1) - f(x,k)).
        			  \end{split}
\end{equation}
We denote by $\dP_\nu$ the law of this process, and write $\dP_{(x,i)} \egalDef \dP_{\delta_{(x,i)}}$ when the 
initial distribution is a Dirac mass. Consistently, one has
$g_k(y)=\frac{2}{\beta\gamma} \cosh(\beta y)$ in the definition of the
invariant measure $\mu$, see Proposition~\ref{prop:mu}.

The two main results are the exponential ergodicity of the process
$(X_t,I_t)_{t\ge 0}$ and a central limit theorem for the ergodic averages.
\begin{thm}\label{thmexperg}
  The process $(X_t,I_t)$ with generator~\eqref{eq:generator_metadynamics_flat} is exponentially ergodic~: there exists $\delta>0$, $C'_\delta<\infty$
 such that,  for any 
  starting point $(x,i)$, 
  \begin{equation}\label{eq:experg}
    \| \cL \left( (X_t,I_t) | (X_0,I_0) = (x,i)\right) - \mu \|_{TV} \leq C'_\delta W(x,i) e^{-\delta t}, 
  \end{equation}
  where $W(x,i) = \exp(\chi S(x))$ with 
\begin{equation}\label{eq:S}
S(x)=\sum_{i=0}^K \left(\left(\max_{0\le j\le K}
  \sum_{k=1}^jx_k\right) - \sum_{k=1}^ix_k\right)
\end{equation}
 (with the convention $\sum_{k=1}^0
  x_k=0$) and $\chi\in (0,+\infty)$ is a constant chosen in the proof of
  Theorem~\ref{thm:Vdecreases} below. 
\end{thm}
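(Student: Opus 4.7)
I would establish~\eqref{eq:experg} through the classical Meyn--Tweedie strategy: verify a Foster--Lyapunov drift inequality of the form $LW\leq -\delta W+b\mathbf{1}_C$ for the proposed weight $W(x,i)=\exp(\chi S(x))$, together with a uniform minorization $P_T((x,i),\cdot)\geq \varepsilon\,\nu(\cdot)$ on the sublevel set $C=\{W\leq R\}$. Combined, these two ingredients yield geometric ergodicity in the $W$-weighted total variation norm (for instance via Meyn--Tweedie or the Hairer--Mattingly criterion), which is exactly~\eqref{eq:experg}. The key technical statement is the drift inequality, which the authors flag as Theorem~\ref{thm:Vdecreases}, so the bulk of the work goes there.

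\textbf{Drift step.} I would split $L=L_{\mathrm{flow}}+L_{\mathrm{jump}}$ according to~\eqref{eq:generator_metadynamics_flat}. Writing $s_i(x)=\sum_{k=1}^i x_k$, $M(x)=\max_{0\leq j\leq K}s_j(x)$ and $S(x)=(K{+}1)M(x)-\sum_{i=0}^K s_i(x)$, the telescoping identity $s_i=L_t(i)-L_t(0)$ combined with the PDMP flow vectors $F_k$ of Remark~\ref{rem:pdmp} shows that under the deterministic pieces of the dynamics, $t\mapsto S(X_t)$ is piecewise affine with slope uniformly bounded by a constant depending only on $K$. Hence $L_{\mathrm{flow}}W\leq c_0(K)\chi W$. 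For the jump part, the nearest-neighbour move $k\to k-1$ at rate $e^{\beta x_k}$ turns out to decrease $s_k$-related contributions to $S$, so that $W(x,k-1)-W(x,k)\leq -(1-e^{-\chi\eta(x,k)})W(x,k)$ for an explicit positive increment $\eta(x,k)$; the analogous statement holds for the jump $k\to k+1$ at rate $e^{-\beta x_{k+1}}$. Choosing $\chi\in(0,\beta)$ small enough ensures that the gain from these jumps, once multiplied by the exponentially large rates in the regions where $S$ is large, dominates the linear $c_0(K)\chi$ coming from the flow, and one obtains $LW(x,i)\leq -\delta W(x,i)$ for $S(x)$ outside a compact interval. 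This gives the drift inequality with $C=\{S\leq R\}$ for $R$ large.

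\textbf{Minorization step.} On the sublevel set $C$, all jump rates $e^{\pm\beta x_k}$ are bounded above and below by positive constants. Using the switching-vector-field description of Remark~\ref{rem:pdmp}, from any starting point $(x,i)\in C$ one can explicitly prescribe a finite sequence of jumps in $I$, interspersed with deterministic flows along the $F_k$'s, which drives the process in a uniformly bounded time $T$ into a fixed small neighbourhood of a reference point, with probability bounded below uniformly in $(x,i)\in C$. The explicit linear flow along each $F_k$ makes the induced distribution on the $X$-coordinates absolutely continuous with a density bounded below on that neighbourhood, yielding a Doeblin-type minorization $P_T((x,i),\cdot)\geq \varepsilon\,\nu(\cdot)$ on $C$, in the spirit of the PDMP references \cite{BH12,BLMZ12,BLMZ15} cited in the remark.

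\textbf{Main obstacle.} The hard part is the drift estimate, for two reasons. First, $S$ is non-smooth because of the maximum, so I have to work with directional derivatives along the PDMP flows rather than plain gradients. Second, the jump intensities $e^{\pm\beta x_k}$ blow up exponentially precisely in the regions where $S$ is large, so a naive bound is hopeless; the whole argument rests on the delicate cancellation between these explosive rates and the exponential Lyapunov weight, which forces the sharp tuning $\chi\in(0,\beta)$. Once this drift is secured, passing to exponential ergodicity through minorization and the Meyn--Tweedie machinery is comparatively routine.
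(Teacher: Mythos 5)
Your plan rests on an infinitesimal Foster--Lyapunov drift $LW\le -\delta W+b\mathbf 1_C$ for $W(x,i)=e^{\chi S(x)}$, and the step that is supposed to save the day — "the nearest-neighbour move $k\to k-1$ \ldots turns out to decrease $s_k$-related contributions to $S$, so that $W(x,k-1)-W(x,k)\leq -(1-e^{-\chi\eta})W(x,k)$" — is simply false. A jump of the walker changes only $I_t$, not $X_t$ (the $X$-coordinates are continuous; only their velocities switch), and $W$ depends on $x$ alone, so $W(x,k-1)=W(x,k)$ for every $x$ and the whole jump part of the generator contributes \emph{exactly zero} to $LW$. Consequently there is no cancellation between the exponential jump rates $e^{\pm\beta x_k}$ and the exponential Lyapunov weight, because those rates never appear in $LW$ at all. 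What remains is only the flow part: a short computation with $S(x)=(K{+}1)\max_j s_j(x)-\sum_i s_i(x)$ gives
\[
LW(x,k)=\chi W(x)\left[\,(K{+}1)\,\mathbf 1_{\{k=j^*(x)\}}-1\,\right],
\]
where $j^*(x)=\arg\max_j s_j(x)$, so $LW=K\chi W>0$ whenever the walker sits at the current maximum of the potential profile, and $LW=-\chi W$ otherwise. The set $\{(x,j^*(x)):x\in\dR^K\}$ is unbounded, so the positive part cannot be confined to a compact set $C$ and the infinitesimal drift inequality you are after fails outright.

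This is precisely why the paper does \emph{not} use a one-step drift. Theorem~\ref{thm:Vdecreases} proves a contraction for the transition operator over a macroscopic time horizon, $P_tW(x,i)\le\kappa W(x,i)$, but only for $(t,a,A)$-separated configurations; the heart of that proof is that $S(X_t)-S(x)$ is controlled by the total time $M_t$ the walker spends at the running maximum, and the Ray--Knight representation (Section~\ref{sec:RayKnight}) is what makes it possible to show $\prb{M_t>bt}$ is exponentially small when the initial gaps are large compared to $t$. Because separation is a $t$-dependent notion, the paper then builds an auxiliary discrete-time chain with adaptively chosen sampling times (Lemma~\ref{lem:YnJn}) to turn the $P_t$-contraction into a genuine geometric drift and into the exponential-return-time bound of Theorem~\ref{thm:expMoments_continuous}, after which the passage to~\eqref{eq:experg} via Meyn--Tweedie/Down--Meyn--Tweedie and a $T$-chain/petite-set argument is indeed the routine part, as you anticipate. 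So the minorization side of your plan is consistent with what the paper does, but the drift side as you describe it cannot work and must be replaced by the multi-step, Ray--Knight-based estimate.
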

The function $S$ represents the ``computational sand'' needed to fill
the energy landscape defined by $l_k=\sum_{i=1}^k x_i$, see
Figure~\ref{fig:sand} below for a schematic representation.

In order to state precisely a result on ergodic averages, we need to 
ensure that they are defined. To this effect we introduce 
the following condition, linking a function 
$f$ and the initial measure $\nu$:
\begin{equation}
  \label{eq:finite_along_trajectories}
  \prb[\nu]{ \forall t\geq 0, \int_0^t \abs{f}(X_s,I_s) ds < \infty} = 1.
\end{equation}

\begin{thm}[Ergodic limit]\label{th:ergo}
  Let $f:\dR^K\times\{0,\hdots,K\} \to \dR$ be a measurable function such that $\int
  |f|d\mu<\infty$. If $f$ and the initial measure $\nu$ are such
  that \eqref{eq:finite_along_trajectories} holds, then 
 \[
 \frac{1}{t} \int_0^t f(X_s, I_s) ds \xrightarrow[t\to\infty]{\dP_\nu - \text{a.s.}} \int f d\mu.
  \]
\end{thm}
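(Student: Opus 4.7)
The plan is to deduce this ergodic theorem from the exponential ergodicity already proved in Theorem~\ref{thmexperg}, via a standard two-step argument: first apply Birkhoff's theorem to the stationary version of the process, then transfer the result to an arbitrary initial law $\nu$ by a coupling. Because $W \geq 1$ and the total-variation bound~\eqref{eq:experg} holds from every starting point, standard Meyn--Tweedie theory for continuous-time Markov processes implies that $(X_t, I_t)_{t\geq 0}$ is positive Harris recurrent with unique invariant probability $\mu$. Moreover, the exponential decay in~\eqref{eq:experg} forces the shift on the path space of the stationary process to be ergodic: two independent copies started from $\mu$ become asymptotically independent, so any shift-invariant event has $\dP_\mu$-probability $0$ or $1$.

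Denote by $(X^\star_s, I^\star_s)_{s\geq 0}$ the process with initial law $\mu$. Birkhoff's ergodic theorem applied to the canonical shift, together with the ergodicity just observed, yields for every $f \in L^1(\mu)$
\[
\frac{1}{t}\int_0^t f(X^\star_s, I^\star_s)\,ds \xrightarrow[t\to\infty]{\dP_\mu-\text{a.s.}} \int f\,d\mu,
\]
and the integrals in question are $\dP_\mu$-a.s.\ finite by Fubini. To pass from $\dP_\mu$ to $\dP_\nu$, the exponential TV convergence~\eqref{eq:experg} allows the construction of a Markovian coupling $((X_s, I_s), (X^\star_s, I^\star_s))_{s\geq 0}$ with initial marginals $\nu$ and $\mu$, and an a.s.\ finite coupling time $\tau$ after which the two trajectories coincide. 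Writing
\[
\frac{1}{t}\int_0^t f(X_s, I_s)\,ds = \frac{1}{t}\int_0^\tau f(X_s, I_s)\,ds + \frac{1}{t}\int_\tau^t f(X^\star_s, I^\star_s)\,ds,
\]
hypothesis~\eqref{eq:finite_along_trajectories} ensures that the first term is a well-defined random variable divided by $t$, so it vanishes $\dP_\nu$-a.s., while the second term converges to $\int f\,d\mu$ by the stationary case.

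The main obstacle is the absence of any growth relation between $f$ and the Lyapunov function $W$ of Theorem~\ref{thmexperg}. Without such a bound, ergodic averages of $f$ cannot a priori be controlled by $W$, and it is not automatic that the trajectory integrates $|f|$ on compact time intervals: the process could spend too long near singularities of $f$ even though $\int |f|\,d\mu < \infty$. This is precisely why~\eqref{eq:finite_along_trajectories} is imposed as a hypothesis; in practice it can be checked in each application (for $f$ bounded, for $f$ continuous, or for $f$ dominated by $W$) using~\eqref{eq:experg} and the explicit form of $\mu$ given in Proposition~\ref{prop:mu}. A cleaner alternative to the coupling step above would be to invoke directly the ergodic theorem of Azéma--Duflo--Revuz for positive Harris recurrent continuous-time processes, which produces the a.s.\ limit from every initial distribution as soon as $f \in L^1(\mu)$ and the integrals over finite horizons are defined.
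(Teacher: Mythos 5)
Your proposal follows the paper's proof almost step for step: establish uniqueness of $\mu$ and ergodicity of the stationary process from the total-variation bound in Theorem~\ref{thmexperg}, apply Birkhoff's theorem for the process started from $\mu$, construct a coalescing coupling between the process started from $\nu$ and the one started from $\mu$ (the paper cites Thorisson for this), and then split the time integral at the coupling time, using hypothesis~\eqref{eq:finite_along_trajectories} to make the pre-coupling contribution vanish when divided by~$t$. The only substantive difference is cosmetic (you decompose the integral at $\tau$, the paper uses the equivalent triangle-inequality decomposition), and your remark on the need for~\eqref{eq:finite_along_trajectories} in the absence of a growth bound linking $f$ to $W$ correctly identifies the role of that hypothesis.
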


\begin{thm}[Central limit theorem]
  \label{thm:clt}
  Let $f:\dR^K\times\{0,\hdots,K\} \to \dR$ be a measurable function. 
 Assume that $f$ is in $L^p(\mu)$ for some $p>2$, and that $\int f d\mu = 0$. 
 Then 
  $\int_0^\infty|\esp[\mu]{f(X_0,I_0)f(X_t,I_t)}|dt<\infty$ and
  $c_f:=2\int_0^\infty\esp[\mu]{f(X_0,I_0)f(X_t,I_t)}dt\in[0,\infty)$.
  Moreover, if the initial measure $\nu$ is such that \eqref{eq:finite_along_trajectories}
  is satisfied, then 
  \[
 \frac{1}{\sqrt{t}} \int_0^t f(X_s, I_s) ds\mbox{ converges in law to }\cN(0, c_f)\mbox { under }\dP_{\nu}\mbox{ as }t\to\infty.
  \]
  \end{thm}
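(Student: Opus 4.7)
The plan is to follow the classical martingale-approximation route to a CLT for Markov processes, using as the main input the exponential ergodicity provided by Theorem~\ref{thmexperg}. I would proceed in four steps: (i) show exponential decay of the stationary autocorrelation $R_f(t)\egalDef\esp[\mu]{f(X_0,I_0)f(X_t,I_t)}$, which gives its integrability and defines the constant $c_f$; (ii) solve the Poisson equation $-Lg=f$ via the resolvent $g(x,i)=\int_0^\infty (P_tf)(x,i)\,dt$; (iii) apply Dynkin's formula to $g(X_t,I_t)$ to obtain a decomposition of $\int_0^t f(X_s,I_s)\,ds$ as a martingale plus bounded boundary terms; (iv) conclude via Rebolledo's martingale CLT after checking that $\langle M\rangle_t/t\to c_f$ using Theorem~\ref{th:ergo}.

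For step (i), under $\dP_\mu$ one has $R_f(t)=\int f(P_tf-\mu f)\,d\mu$. Theorem~\ref{thmexperg} gives the pointwise bound $|P_th(x,i)-\mu h|\le 2\|h\|_\infty C'_\delta W(x,i)e^{-\delta t}$ for bounded $h$. The Lyapunov function $W=\exp(\chi S)$ belongs to every $L^q(\mu)$: $S$ grows at most linearly in the $|x_k|$, whereas the invariant density involves $\exp\bigl(-\frac{2}{\beta\gamma}\sum_k \cosh(\beta x_k)\bigr)$ and decays doubly exponentially. To reach $f\in L^p(\mu)$ with $p>2$, I would split $f=f\ind{|f|\le M}+f\ind{|f|>M}$: the bounded part enjoys the exponential TV bound (controlled by $M$ together with Cauchy--Schwarz against $f\in L^2(\mu)$ and $W\in L^2(\mu)$), and the tail is uniformly small in $L^2(\mu)$ using the trivial contraction $\|P_t\|_{L^2(\mu)\to L^2(\mu)}\le 1$. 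Optimizing over $M$ yields $|R_f(t)|\le C(\|f\|_{L^p(\mu)})e^{-\delta' t}$ for some $\delta'>0$, hence $R_f\in L^1(\dR_+)$ and $c_f\in[0,\infty)$ is well defined (non-negativity is standard, since $c_f$ is the asymptotic variance of the stationary functional).

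The same bound shows that the resolvent $g=\int_0^\infty P_tf\,dt$ converges as a Bochner integral in $L^2(\mu)$ and satisfies $-Lg=f$ $\mu$-a.e.\ (step (ii)). For step (iii), the PDMP structure recalled in Remark~\ref{rem:pdmp} makes Dynkin's formula tractable: between jumps, $X_t$ evolves deterministically along $F_{I_t}$, and $g$ is differentiable in $x$ along these flows by differentiation under the integral sign (the semigroup inherits smoothness in $x$ from the first-order drift in the generator). One obtains
$$\int_0^tf(X_s,I_s)\,ds=g(X_0,I_0)-g(X_t,I_t)+M_t,$$
with $M_t$ a local martingale whose predictable bracket is $\langle M\rangle_t=\int_0^t\Gamma(g,g)(X_s,I_s)\,ds$, where $\Gamma(g,g)=Lg^2-2gLg$ is a positive combination of squared jumps of $g$ at $I_t$-transitions. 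Invariance of $\mu$ gives $\esp[\mu]{\Gamma(g,g)}=2\esp[\mu]{gf}=c_f$.

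For step (iv), apply Theorem~\ref{th:ergo} to $\Gamma(g,g)$ (its pathwise integrability condition~\eqref{eq:finite_along_trajectories} follows from the local boundedness of jump rates stressed in Section~\ref{sec:the_model}) to get $\langle M\rangle_t/t\to c_f$ $\dP_\nu$-a.s. The Lindeberg condition is straightforward because the jumps of $g(X_t,I_t)$ are controlled in $L^2(\mu)$ by continuity of $g$ and the finiteness of the $I$-state space. Rebolledo's continuous-time martingale CLT then delivers $M_t/\sqrt{t}\Rightarrow\cN(0,c_f)$, and the boundary terms $g(X_0,I_0)/\sqrt{t}$ and $g(X_t,I_t)/\sqrt{t}$ vanish in probability thanks to Theorem~\ref{th:ergo} applied to $g^2\in L^1(\mu)$. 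The main obstacle I anticipate is step (iii): the resolvent construction gives $g$ only in $L^2(\mu)$, and justifying Dynkin's formula for it likely requires first establishing the decomposition on a core of smooth compactly supported $f$, then extending to $f\in L^p(\mu)$ by approximation, using the $L^2$-convergence of $g$ and the ergodic control of the error terms.
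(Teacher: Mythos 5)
Your strategy is a genuinely different route from the paper's. The paper reduces to the stationary case $\nu=\mu$ by a simple total-variation argument, discretizes time into unit blocks $Y_n=\int_n^{n+1}f(X_s,I_s)\,ds$, observes that Theorem~\ref{thmexperg} and $\int W\,d\mu<\infty$ give an exponentially decaying strong-mixing coefficient for the stationary sequence $(Y_n)$, and then invokes the Ibragimov--Linnik CLT for strongly mixing stationary sequences. This requires no Poisson equation, no Dynkin's formula, and no martingale CLT. You propose instead the martingale-approximation method: resolvent solution of $-Lg=f$, Dynkin decomposition, then Rebolledo. Both are classical avenues, and your step (i) (the truncation/interpolation argument showing $|R_f(t)|\le Ce^{-\delta'(1-2/p)t}$, using $W\in L^q(\mu)$ for all $q$ and the $L^2$-contractivity of $P_t$) is correct and essentially the same ingredient the paper uses for the mixing coefficients. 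The main trade-off is that the paper's route bypasses entirely the domain issues your steps (ii)--(iv) must confront.

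Two of those issues are genuine gaps in your sketch rather than routine details. First, you define $g=\int_0^\infty P_tf\,dt$ as an $L^2(\mu)$ limit, but $-Lg=f$ and the Dynkin identity require $g\in\mathrm{Dom}(L)$; you correctly flag this, but the fix you outline (establish the decomposition for a ``core of smooth compactly supported $f$'' and pass to the limit) is not obviously viable, because the Poisson solution for a smooth truncation of $f$ does not converge to the Poisson solution for $f$ in a topology that preserves the martingale structure and the $\carr{e}$-du-champ without extra work. The standard escape is the Kipnis--Varadhan $\lambda$-resolvent trick ($g_\lambda=(\lambda-L)^{-1}f$, which \emph{is} in the domain, then $\lambda\downarrow 0$), but even that needs the $H_{-1}$-type estimate, which you have not established and which is not automatic for non-reversible processes. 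Second, your justification of the Lindeberg condition (``the jumps of $g(X_t,I_t)$ are controlled in $L^2(\mu)$ by continuity of $g$ and the finiteness of the $I$-state space'') does not hold: $g$ is only an $L^2$ element, not a continuous function, and the jumps $g(X_{t},I_{t^-}\pm1)-g(X_{t},I_{t^-})$ can be unbounded. In the stationary case one can appeal to Brown's ergodic martingale CLT which avoids Lindeberg, but under a general $\nu$ you would need to rerun the whole martingale analysis under $\dP_\nu$, and the rationale you give is not sufficient. Until those two points are closed, the argument does not yet reach the same rigor as the paper's proof.
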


  \begin{cor}[Learning the free energy differences]\label{cor:TCL}
    Consider the process in a landscape $A$, that is, with generator given by
    \eqref{eq:generator_metadynamics}. For any $k$, there exists $c_k\in[0,\infty)$ such that
    for any initial measure $\nu$, the time average $M_t(k) \defEgal  \frac{1}{t} \int_0^t X_s(k) ds$
    satisfies:
    \begin{align*}
      M_t(k) &\xrightarrow[t\to\infty]{\text{a.s.}}  - A'_k,   &
                                                                 \sqrt{t}
                                                                 \left(
                                                                 M_t(k)
      +A_k'\right)&\xrightarrow[t\to\infty]{(d)} \cN(0,c_k).
    \end{align*}
  \end{cor}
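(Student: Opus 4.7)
My plan is to reduce Corollary~\ref{cor:TCL} to an application of Theorems~\ref{th:ergo} and~\ref{thm:clt} to the coordinate function $f(x,k) = x_k$, using the invariance properties stated in Proposition~\ref{thm:invariance} to translate between a general landscape $A$ at deposition rate $\gamma$ and the flat, unit-rate process for which those theorems are proved.

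\textbf{First}, I would combine the two items of Proposition~\ref{thm:invariance}. Item~\ref{gamma1} rescales $X$ by $\gamma^{-1}$ to reduce the deposition rate to $1$, at the cost of turning the landscape into $\gamma^{-1}A$ and the inverse temperature into $\beta\gamma$. Item~$1$ then shifts the continuous component to flatten the landscape. Putting both together and choosing initial laws that match under these bijections, one obtains the identity in distribution
\begin{equation*}
X_t(k) + A'_k \;\stackrel{d}{=}\; \gamma\, Y_t(k),
\end{equation*}
where $(Y_t, J_t)$ is a process with generator~\eqref{eq:generator_metadynamics_flat} at inverse temperature $\beta\gamma$, started from the law obtained from $\nu$ via the map $(x,i)\mapsto (\gamma^{-1}x+\gamma^{-1}A', i)$.

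\textbf{Next}, I would check the hypotheses of Theorems~\ref{th:ergo} and~\ref{thm:clt} for $f(y,j) = y_k$. The invariant measure $\mu$ of the flat, unit-rate process, described in Proposition~\ref{prop:mu}, is a product whose $k$-th marginal has density proportional to $\exp\!\bigl(-(2/(\beta\gamma))\cosh(\beta\gamma\, y_k)\bigr)$. This density is symmetric, so $\int f\, d\mu = 0$, and decays super-exponentially, so $f\in L^p(\mu)$ for every $p>0$; in particular the $p>2$ integrability required by Theorem~\ref{thm:clt} holds. The deterministic bound $|Y_s(k)| \le |Y_0(k)| + s$ on $[0,t]$ (stated just after the definition of $L$ in~\eqref{eq:generator_metadynamics}) makes condition~\eqref{eq:finite_along_trajectories} automatic for this $f$ and any initial measure.

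\textbf{Finally}, Theorem~\ref{th:ergo} yields $\frac{1}{t}\int_0^t Y_s(k)\, ds \to 0$ almost surely, and Theorem~\ref{thm:clt} gives $\frac{1}{\sqrt t}\int_0^t Y_s(k)\, ds \to \cN(0, c_{f,k})$ in distribution, with
\begin{equation*}
c_{f,k} \;=\; 2\int_0^\infty \esp[\mu]{Y_0(k) Y_t(k)}\, dt \;\in\; [0,\infty).
\end{equation*}
Multiplying by $\gamma$ through the identity in distribution of the first step gives $M_t(k) + A'_k \to 0$ almost surely and $\sqrt{t}\,(M_t(k) + A'_k) \to \cN(0, \gamma^2 c_{f,k})$ in distribution; setting $c_k = \gamma^2 c_{f,k}$ is the announced conclusion. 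I do not foresee any real obstacle beyond bookkeeping: the one point requiring care is to keep track of the two successive rescalings so that the factor $\gamma^2$ in the asymptotic variance is accounted for correctly.
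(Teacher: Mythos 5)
Your proof is correct and follows exactly the route the paper intends: the corollary is stated without a written proof because it is meant to be a direct consequence of Proposition~\ref{thm:invariance}, Theorem~\ref{th:ergo}, and Theorem~\ref{thm:clt} applied to the coordinate function $f(y,j)=y_k$, which is precisely what you do. The only point worth flagging is that item~1 of Proposition~\ref{thm:invariance} is stated only for the specific initial condition $X_0=0$, so your chain of reductions tacitly uses the generalization to arbitrary starting points $X_0=x_0$ (giving $(X_t+A',I_t)\stackrel{d}{=}(Y_t,J_t)$ with $Y_0=x_0+A'$); this holds by inspection of the generator~\eqref{eq:generator_metadynamics}, which depends on $x$ and $A'$ only through $x+A'$, but it should be said explicitly since you need the identity for a generic initial law $\nu$.
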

  Under the invariant measure $\mu$, the $X(k)$'s are
  independent. Moreover, in the landscape $A$,
 the marginal density of $X(k)$ is symmetric  around its mean value $-A'_k$. 
Corollary~\ref{cor:TCL} implies that the 
  ergodic means $\frac{1}{t} \int_0^t X_s(k) ds$ converge to their 
  expected value $-A'_k$. In this sense, the process $X_t$  "learns"
  the derivative of the free energy profile $A$.

\begin{rem}[On the finitess condition \eqref{eq:finite_along_trajectories}]
  There are many couples $(f,\nu)$ for which  \eqref{eq:finite_along_trajectories}
  is easily checked. 
   \begin{itemize}
\item Since the process moves at a finite speed, in the sense that 
  \[ \prb[(x,i)]{\forall k\in\{0,\hdots,K\},\,\forall s\in[0,t],\,|X_s(k)|\le |x_k|+t}=1,\]
  the condition~\eqref{eq:finite_along_trajectories} is satisfied for all initial measures
  $\nu$ if $f$ is locally bounded on $\dR^K\times\{0,\hdots,K\}$.
\item Let $f\in L^1(\mu)$. Since $\mu$ is invariant, 
  \eqref{eq:finite_along_trajectories} is satified for $\nu = \mu$. 
  Consequently, there is a measurable set $S_f$ of starting points 
  such that $\mu(S_f) = 1$ and~\eqref{eq:finite_along_trajectories} is
  satisfied for $\nu = \delta_{(x,i)}$ for all $(x,i)\in S_f$.
  Consequently  \eqref{eq:finite_along_trajectories} also 
  holds if $\nu$ is absolutely continuous with respect to $\mu$. 
\item When $K=1$, the condition~\eqref{eq:finite_along_trajectories} is satisfied 
  for any initial measure $\nu$ as soon as $f\in L^1(\mu)$. Indeed, denoting by $N_t$ the a.s. finite number of jumps of the
  component $I_s$ on $[0,t]$, one checks by performing a change of variables
  between $0$ and the first jump time then between two successive of these jump
  times, last between the last one and $t$ that for each measurable function
  $\varphi:\dR\to\dR_+$ 
  \[\int_0^t\varphi(X_s)ds\le 
(N_t+1)\int_{\inf_{s\in[0,t]}X_s}^{\sup_{s\in[0,t]}X_s}\varphi(x)dx\le 
(N_t+1)\exp\left(g_1\left(\inf_{s\in[0,t]}X_s\right)\vee g_1\left(\sup_{s\in[0,t]}X_s\right)\right)\int\varphi(x)dm(x), \]
so that the left hand side is a.s.\ finite when $\varphi\in L^1(m)$. 
\item
  By contrast, if $K\geq 2$ then one may construct couples $(f,\nu)$ for which 
  $f\in L^1(\mu)$ but \eqref{eq:finite_along_trajectories} is \emph{not} satisfied. 
  Indeed, for the initial measure $\nu = \delta_{((0,...,0),K)}$, there is a positive
  probability to observe the trajectory $(X_s,I_s) = ((0,...,0,s),K)$ until time $1$, 
  and one can easily define an $f\in L^1(\mu)$ such that the integral $\int_0^1 f((0,...,0,s),K) ds$ 
  diverges. 
\end{itemize}
\end{rem}

The main ingredient to prove these results is the following
control on return times to a compact set. 
\begin{thm}
  \label{thm:expMoments_continuous}
There exists $\eta>0$, a compact set
$\cK\subset \dR^K$ (defined in Section \ref{sec:expMoments} below), and  
two  constants $\tilde{C}>0$ and $\delta>0$ such that for all $(x,i) \in \dR^K \times \{0, \ldots,K\}$,
  \[ \esp[(x,i)]{\exp\left(\delta \tau_\cK(\eta)\right)} \leq
    \tilde{C} \exp(\chi S(x)) <\infty,\]
  where $S(x)$ has been defined in~\eqref{eq:S} and $\tau_\cK(\eta) = \inf\{ t\geq \eta,
  X_t\in\cK\}$ is the first return time to $\cK$ after the time $\eta$
  for the process $(X_t,I_t)_{t \ge 0}$ with generator~\eqref{eq:generator_metadynamics_flat}.
\end{thm}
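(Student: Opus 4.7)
The plan is to establish the exponential moment bound on $\tau_\cK(\eta)$ by verifying a Foster--Lyapunov drift condition with Lyapunov function $V(x,i) = \exp(\chi S(x))$ and then applying standard arguments from the stability theory of Markov processes in the spirit of Meyn--Tweedie--Down.

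\textbf{Step 1 (drift inequality).} I would first establish an inequality of the form $LV(x,i) \leq -\alpha V(x,i) + b\, \mathbf{1}_{\cK'}(x)$ for some compact set $\cK' \subset \dR^K$ and positive constants $\alpha, b$. This is precisely what Theorem~\ref{thm:Vdecreases} (referenced in the statement) must provide. The intuition is the ``sand'' interpretation of $S$: when $S(x)$ is large some sites have accumulated much more local time than others, which boosts the jump rate $e^{\beta x_k}$ pushing the walker away from high-sand sites toward low-sand sites, so that the overall tendency on $S$ is negative. The delicate point is the non-smoothness of $S$ via the $\max_j$: one has $\partial_{x_k} S(x) = (K+1)\mathbf{1}_{k \leq j^\star(x)} - (K+1-k)$ where $j^\star(x)$ denotes the argmax, and one must combine the deterministic-drift contribution $\chi V \cdot (\partial_{x_i} S - \partial_{x_{i+1}} S)$ with the jump contribution involving $e^{\pm \beta x_i}$ terms, showing that outside a compact set the exponential jump part dominates and yields a negative multiple of $V$.

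\textbf{Step 2 (from drift to exponential moments).} Given such a drift inequality, I would apply the standard martingale argument: the process
\[ M_t := e^{\delta t} V(X_t, I_t) - V(X_0, I_0) - \int_0^t e^{\delta s}\bigl(\delta V + LV\bigr)(X_s, I_s)\, ds \]
is a local martingale. For $\delta \in (0, \alpha)$, the drift condition yields $\delta V + LV \leq b\, \mathbf{1}_{\cK'}$. Localising via the finite-speed property $|X_s(k)| \leq |x_k| + s$ (noted just before Remark~\ref{rem:pdmp}) to avoid any explosion issue, then applying optional stopping at $\tau_\cK(\eta)$ for a compact set $\cK \supset \cK'$, one obtains
\[ \esp[(x,i)]{e^{\delta \tau_\cK(\eta)} V(X_{\tau_\cK(\eta)}, I_{\tau_\cK(\eta)})} \leq V(x,i) + b \sup_{\cK'} V \cdot \esp[(x,i)]{\int_0^{\tau_\cK(\eta)} e^{\delta s} \mathbf{1}_{\cK'}(X_s)\, ds}, \]
and the right-hand side is bounded by $\tilde{C} V(x,i)$ after handling the integral by another application of the same drift estimate. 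Since $V = \exp(\chi S) \geq 1$ on $\cK$, this yields $\esp[(x,i)]{e^{\delta \tau_\cK(\eta)}} \leq \tilde{C} \exp(\chi S(x))$.

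\textbf{Step 3 (role of $\eta$).} The waiting time $\eta>0$ is necessary because otherwise $\tau_\cK(0) = 0$ trivially when $(x,i) \in \cK$. Using the finite-speed bound, $V(X_s, I_s)$ is uniformly bounded on $[0,\eta]$ by $e^{\chi S(x) + C(K,\eta)}$, which permits the above supermartingale estimate to extend to all starting points, including those in $\cK$ itself, at the price of enlarging the constant $\tilde{C}$.

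The main obstacle is Step~1: verifying the drift inequality requires a careful case analysis tracking how the argmax $j^\star(x)$ interacts with the current position $I_t$ of the walker and the signs of the jumps, and exploiting the fact that the exponential jump rates $e^{\beta x_k}$ eventually overwhelm the polynomial-in-$x$ contribution coming from the drift part $\gamma(\partial_{x_i} - \partial_{x_{i+1}})$. This is where the parallel with Tarres--Toth--Valko~\cite{TV11} and the Ray--Knight-type decomposition mentioned in the remark on the discrete circle should enter, providing the structural identity needed to see the negative feedback quantitatively.
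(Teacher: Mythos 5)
The proposal breaks down at Step 1. You claim Theorem~\ref{thm:Vdecreases} ``must provide'' the infinitesimal drift condition $LW(x,i) \leq -\alpha W(x,i) + b\,\mathbf{1}_{\cK'}(x)$ for $W(x,i)=\exp(\chi S(x))$. No such inequality holds, and the paper does not try to prove one. The reason is structural: $W$ is independent of the discrete coordinate $i$, while the jumps of the process change only $i$ and leave $x$ continuous, so the two jump terms of $L$ evaluated on $W$ each give $\exp(\pm\beta x_\cdot)\bigl(W(x)-W(x)\bigr)=0$. The jump rates $e^{\pm\beta x_k}$ therefore contribute nothing at all to $LW$; the dominant-jump mechanism you invoke cannot even enter the generator calculation. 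What survives is the flow part, $LW(x,i)=\chi W(x)\bigl(\mathbf{1}_{i>0}\,\partial_{x_i}S - \mathbf{1}_{i<K}\,\partial_{x_{i+1}}S\bigr)$, and using your (correct) formula $\partial_{x_k}S=(K+1)\mathbf{1}_{k\leq j^\star}-(K+1-k)$ one checks that this equals $-\chi W(x)$ when $i\neq j^\star$ but is \emph{strictly positive}, of order $K\chi W(x)$, whenever $i=j^\star$. Configurations with $i=j^\star$ and $\|x\|$ arbitrarily large exist, so $LW\leq -\alpha W + b\,\mathbf{1}_{\cK'}$ cannot hold outside any compact set. The stabilization by the jumps is real, but it acts only on a positive time horizon, by removing the walker from the argmax before $S$ has time to grow; it is invisible infinitesimally.

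Accordingly, Theorem~\ref{thm:Vdecreases} gives a different kind of statement: $P_tW\leq\kappa W$, but only for $t\geq t_0$ and only for \emph{$(t,a,A)$-separated} starting points. The missing ingredient in your proposal is how to turn this conditional, finite-horizon bound into a global drift condition. The paper does this by covering $\dR^K$ by the separated sets $\cS_0,\dots,\cS_{K-1}$ (with $t_k=t_0(A/a)^k$) and a compact set $\cK$, then defining an auxiliary discrete-time chain $(Y_n,J_n)$ that samples the original process at the state-dependent horizon $t_{\underline{k}(Y_n)}$, so that Theorem~\ref{thm:Vdecreases} applies at every step. Lemma~\ref{lem:YnJn} gives $PW\leq\kappa W + C\mathbf{1}_\cK$ for \emph{this} chain, after which Meyn--Tweedie theory yields exponential moments of the discrete return time $\tilde\tau_\cK$; the comparison $\tau_\cK(\eta)\leq t_K\,\tilde\tau_\cK$ concludes. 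Your Steps 2 and 3 are sensible in spirit, but without this discrete-time intermediary the argument has nothing to start from.
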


\subsubsection{A key tool: the Ray-Knight representation}
\label{sec:RayKnight}

We consider here the process $(X_t,I_t)_{t \ge 0}$ with $(X_t(1),\ldots,X_t(K))\in\dR^K$ 
and $I_t\in\{0,\ldots,K\}$ with generator~\eqref{eq:generator_metadynamics_flat}
(flat landscape and $\gamma=1$), starting from an arbitrary point $(x_0,i_0)$. 
Recall that $L_t(k) = \int_0^t \ind{I_s = k} ds$ is the local time --- namely the
time spent by the discrete coordinate $I_s$ on site $k$ before time $t$. 
Note that with this notation,
\[
\text{ for } t \ge 0 \text{ and } k \in \{1, \ldots, K\}, \,  X_t(k) = x_0(k) + L_t(k) - L_t(k-1).
\]
The key tool in the proof of the control on the return times
(Theorem~\ref{thm:expMoments_continuous}) is a very nice representation of the local
time profile $k \mapsto L_t(k)$ as a random walk, in the spirit of the classical Ray-Knight
theorem for the Brownian motion (see for example \cite[Chapter XI.2]{RY99}). 
This representation is established in~\cite{TV11} in a very similar setting. 
We describe it here,  using very similar notation to~\cite{TV11}.

For $r\geq 0$ and $j\in\{0,\ldots,K\}$, let $T_{j,r}$ be the first time when $L_t(j)$ exceeds $r$: 
\[ 
  T_{j,r} = \inf\{ s\geq 0, L_t(j) \geq r\}. 
\]
Let $\Lambda_{j,r}(k)$ be the local time profile at the "inverse local
time" $T_{j,r}$: for all $k \in \{0, \ldots K\}$,
\[ \Lambda_{j,r}(k) = L_{T_{j,r}}(k). \]
Thus, $\Lambda_{j,r}(k)$ is the time spent in $k$ when $L_t(j)$
reaches $r$.

Ray-Knight theorems describe the law of the local time profile $k
\mapsto \Lambda_{j,r}(k)$: 
in our case, as shown in \cite{TV11}, it is given by a random walk
with ``time'' $k$, and an explicit distribution of the increments
$\Lambda_{j,r}(k+1)-\Lambda_{j,r}(k)$, as a function of $k$ and
$\Lambda_{j,r}(k)$. To specify this distribution, let us introduce three (families of) auxiliary processes.

For $k\in\{1,\ldots,K\}$, let  $\xi_k(s)$ be the $k$-th coordinate $X(k)$ "viewed only when it
changes". The construction is illustrated in Figure~\ref{fig:defXiEta}. 
More formally, let $\tau_t(k) = L_t(k) + L_t(k-1)$ record the time
spent in $\{k-1,k\}$, let $\theta_s(k) = \inf\{ t, \tau_t(k) \geq s\}$ be its inverse, 
and let
\[ 
  \xi_k(s) = X_{\theta_s(k)} (k).
\]
Let $\alpha_k(s) \in \{-1,1\}$ be the "discrete velocity"
\[
  \alpha_k(s) = \ind{I_{\theta_s(k)} = k} - \ind{I_{\theta_s(k)} = k-1}. 
\]
At the time $s$ for the "local" clock, either $\alpha_k = 1$ and $\xi_k$ is moving up, 
or $\alpha_k = -1$ and $\xi_k$ is moving down. 

\begin{figure}
  \centering
\begin{tikzpicture}[
  flat/.style={dashed},
  up/.style={very thick,red},
  down/.style={very thick,blue},
  scale=0.7
  ]
  \draw[help lines,->] (0,0)--(17,0);
  \draw[help lines,->] (0,-2)--(0,3);
  \path (0,0)  coordinate (a) 
      ++(2,0)  coordinate (b) 
      ++(2,2)  coordinate (c) 
      ++(3,-3) coordinate (d) 
      ++(1,0)  coordinate (e) 
      ++(1,-1) coordinate (f)
      ++(2,2)  coordinate (g)
      ++(3,0)  coordinate (h)
      ++(1,1)  coordinate (i)
      ++(2,-2)  coordinate (j);
   \draw[up] (b)--(c) (f)--(g) (h)--(i) ;
   \draw[down] (c)--(d) (e)--(f) (i)--(j);
   \draw[flat] (a)--(b) (d)--(e) (g)--(h);
\end{tikzpicture}

\medskip

\begin{tikzpicture}[
  flat/.style={dashed},
  up/.style={very thick,red},
  down/.style={very thick,blue},
  scale=0.7
  ]
  \draw[help lines,->] (0,0)--(17,0);
  \draw[help lines,->] (0,-2)--(0,3);
  \path (0,0)  coordinate (a) 
      coordinate (b) 
      ++(2,2)  coordinate (c) 
      ++(3,-3) coordinate (d) 
      coordinate (e) 
      ++(1,-1) coordinate (f)
      ++(2,2)  coordinate (g)
      coordinate (h)
      ++(1,1)  coordinate (i)
      ++(2,-2)  coordinate (j);
   \draw[up] (b)--(c) (f)--(g) (h)--(i) ;
   \draw[down] (c)--(d) (e)--(f) (i)--(j);
\end{tikzpicture}

\medskip

\begin{tikzpicture}[
  flat/.style={dashed},
  up/.style={very thick,red},
  down/.style={very thick,blue},
  scale=0.7
  ]
  \draw[help lines,->] (0,0)--(17,0);
  \draw[help lines,->] (0,-2)--(0,3);
  \path (0,0)  coordinate (a) 
      ++(0,2)
      coordinate (b) 
      coordinate (c) 
      ++(3,-3)
      coordinate (d) 
      coordinate (e) 
      ++(1,-1)
      coordinate (f)
      ++(0,3)
      coordinate (g)
      coordinate (h)
      coordinate (i)
      ++(2,-2)  coordinate (j);
   \draw[down] (c)--(d) (e)--(f) (i)--(j);
   \draw[up,dotted]   (a)--(b) (f)--(g);
\end{tikzpicture}
\caption{Top figure: trajectory of $X_t(k)=L_t(k) - L_t(k-1)$. Middle figure: corresponding
  trajectory $\xi_k(s)$ --- the flat parts (when $I_t \notin \{k-1,k\}$) have been removed. 
  Bottom figure: corresponding trajectory $\eta_k^-(s)$ ---  the upward parts (when
  $I_t = k$) have been removed. }\label{fig:defXiEta}
\end{figure}
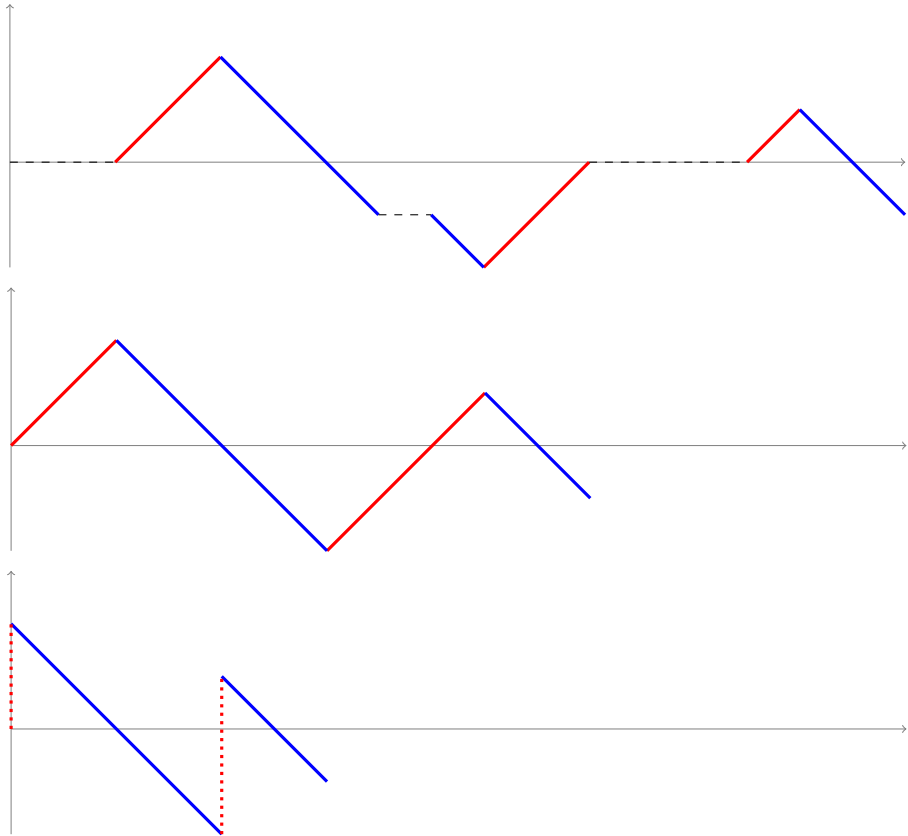


The following facts are easy to check: 
\begin{enumerate}
	\item The $(\xi_k,\alpha_k)$ are one-dimensional PDMPs with a
          common generator (whatever $k \in \{1, \ldots, K \}$)
	  \[
 \forall x \in \dR^K, \,	    Gf(x,\pm 1) = \pm f'(x,\pm 1) + \exp(\pm \beta x) (f(x, \mp 1) - f(x,\pm 1)).
	  \]
	\item The continuous component $\xi_k$ starts from $x_0(k)$. The discrete
	  component $\alpha_k$ starts from $-1$ if $k> i_0$ and from $1$ if $k\leq i_0$. 
	\item The $(\xi_k,\alpha_k)$ for different values of $k$ are independent. 
\end{enumerate}
\begin{rem} The two minor differences with the corresponding statement
  in \cite{TV11} (Proposition 2.1) is that $\xi_k$ does not start from $0$
  and there is a shift in indices, due to differing conventions: our
  $X_k$ is defined on the edge $\{k-1,k\}$ rather than $\{k,k+1\}$. 
\end{rem}

The second family of processes $\eta^-_k$ is defined similarly by looking at $X_t(k)$
only when $I_t$ is in $\{k-1\}$ (see Figure~\ref{fig:defXiEta}) and
the third family $\eta^+_k$ is defined by looking at $(-X_t(k))$ when
$I_t$ is in $k$:
$$\eta^\pm_k(s)= \mp X_{\theta^\pm_s(k)}(k) \text{ where } \theta^\pm_s(k)=
\inf \{t, \tau^\pm_t(k) \ge s\} \text{ with } \tau^+(k)=L_t(k) \text{
    and } \tau^-(k)=L_t(k-1).
$$

Still quoting \cite[Proposition 2.1]{TV11}, we get
\begin{thm}\label{th:ray-knight}
  The PDMP's $\eta^{\pm}_k$ for different values of $k$ are independent. 
  Their common  evolution is given by the generator
  \begin{equation}\label{eq:H}
 Hf(y) = -f'(y) + \exp(-\beta y) \int_y^\infty q(y,z) (f(z) - f(y))
 dz,
\end{equation}
  where the normalized jump kernel $q$ is given by
  \begin{equation}\label{eq:q}
    q(y,z) = \ind{z\geq y} e^{\beta z} \exp\PAR{ - \int_y^z e^{\beta s} ds}.
  \end{equation}
  Their initial distributions are given by: 
  \[
  \begin{array}{r|c|c}
         &\eta^-_k & \eta^+_k \\
	 \hline
 i_0< k  &\delta_{x_0(k)} & q(-x_0(k),z) \, dz \\
 \hline
 i_0\geq k  &q(x_0(k),z) \, dz  & \delta_{-x_0(k)} 
 \end{array}
 \]
\end{thm}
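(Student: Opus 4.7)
The statement is essentially a reformulation of \cite[Proposition 2.1]{TV11}, adapted to the finite segment $\{0,\ldots,K\}$ and to an arbitrary initial condition $(x_0,i_0)$. My plan is to mirror the TV11 argument, organised into three tasks: derivation of the common generator $H$, identification of the initial distributions in the table, and proof of independence across $k$. Throughout, I will use the strong Markov property of $(X_t,I_t)$ at the stopping times $\theta^\pm_s(k)$ defined in the excerpt.

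\emph{Generator.} While $I_t = k-1$, the clock $\tau^-_t(k)=L_t(k-1)$ advances at unit speed and $X_t(k)=L_t(k)-L_t(k-1)$ decreases at unit speed, so the continuous part of the generator of $\eta^-_k$ is the drift $-f'(y)$. Excursions of $I$ strictly below $k-1$ freeze both $\tau^-$ and $X(k)$, hence are invisible to $\eta^-_k$. Excursions of $I$ into $\{k,\ldots,K\}$ freeze $\tau^-$ but modify $X(k)$, so they appear as instantaneous jumps in $\eta^-_k$. Such an excursion begins at rate $\exp(-\beta X_t(k)) = \exp(-\beta y)$ (the rate for $I$ to jump from $k-1$ to $k$), which matches the jump intensity in $H$. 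During the excursion, $X(k)$ only evolves while $I_t = k$ (it is frozen when $I_t \geq k+1$), where it increases at rate $1$; reparametrising time by the amount $u$ spent at level $k$, the excursion ends at hazard $\exp(\beta(y+u))$ for a $k \to k-1$ jump, so that the terminal value $z = y + u$ has density
\[
  \exp(\beta z)\exp\PAR{-\int_y^z \exp(\beta s)\,ds} = q(y,z).
\]
A symmetric computation with signs reversed gives the same generator $H$ for $\eta^+_k$.

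\emph{Initial distributions.} When $i_0 < k$, the walker starts strictly below $k$ and must reach $k-1$ before either $L_t(k-1)$ or $L_t(k)$ can become positive; in particular $X_t(k) = x_0(k)$ up to the first visit of $I$ to $k-1$, so $\eta^-_k(0^+) = x_0(k)$ almost surely. When $i_0 \geq k$, the clock $\tau^-_0(k) = 0$ but we must wait for $I$ to first descend to $k-1$; repeating the excursion analysis above with initial value $y = x_0(k)$ gives that the value of $X$ at this first visit has density $q(x_0(k),\cdot)$. The laws of $\eta^+_k(0^+)$ follow symmetrically by exchanging the roles of $k-1$ and $k$ and negating signs.

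\emph{Independence.} This is the main technical obstacle. Following \cite{TV11}, the strategy is to realise $(X_t,I_t)$ as a deterministic functional of a countable family of independent Poisson processes, one attached to each type of jump at each edge $\{k-1,k\}$, and to show that the pair $(\eta^-_k,\eta^+_k)$ is measurable with respect only to the Poisson input at edge $\{k-1,k\}$. Iterating the strong Markov property at the stopping times $\theta^\pm_s(k)$ identifies, on each edge separately, a sequence of ``drift-then-jump'' increments whose laws are completely determined by the edge-$k$ randomness; since the inputs attached to distinct edges are independent by construction, so are the pairs $(\eta^-_k,\eta^+_k)$ for different values of $k$. The finite-segment setting introduces no essential new difficulty: at the boundary edges ($k=1$ and $k=K$) the set of admissible excursions is restricted, but this only simplifies the excursion analysis and leaves both the generator formula and the independence statement unchanged.
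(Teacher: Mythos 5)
Your proposal is correct and follows the paper's approach: the paper also presents this statement as a near-verbatim adaptation of \cite[Proposition 2.1]{TV11} (differing only by a shift in indices and general initial values), and its accompanying Remark gives essentially the same excursion-based derivation of the drift $-f'$ and jump kernel $q$ that you spell out for $\eta^-_k$. Your extension of that computation to $\eta^+_k$, the case analysis for the initial-distribution table, and the sketch of the Poisson-input factorisation for independence are consistent elaborations of the parts the paper delegates entirely to the cited reference.
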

\begin{rem}
The derivative in the generator of $\eta^-_k$ comes from the fact that
$X_t(k)=L_t(k)-L_t(k-1)$ decreases with velocity $-1$ when $I_t=k-1$. When
$I_t$ moves to smaller values, it must come back to $k-1$ before reaching~$k$
so that no jump is observed. When~$I_t$ moves to higher values, only the time
spent at $k$ before returning to $k-1$ affects $\eta^-_k$: the excursions to
the right of $k$ have no influence. When $X(k)=y$ and $I=k-1$, $I$ moves to $k$
with rate $e^{-\beta y}$ and since the jump rate from $k$ to $k-1$ is given by
$e^{\beta x_k}$ and $X(k)$ increases with velocity $1$ as long as $I$ is equal
to $k$, the probability that the total time spent by $I$ in $k$ before it
returns to $k-1$ is larger than $t$ is $e^{-\int_0^te^{\beta(y+s)}ds}$ so the
density of the value of $X(k)$ when $I$ returns to $k-1$ is $q(y,z)=1_{\{z\ge
  y\}}e^{\beta z}e^{-\int_y^ze^{\beta s}ds}$. Once more, the only differences with \cite{TV11} are a shift in indices and
different initial values.
\end{rem}
 One has
\begin{equation}\label{eq:1}
\Lambda_{j,r}(k)-\Lambda_{j,r}(k-1)=L_{T_{j,r}}(k)-L_{T_{j,r}}(k-1)=X_{T_{j,r}}(k)-x_0(k).
\end{equation}
Suppose now that $k>j$. Then, to reach $j$ from $k$, $I_t$ spends some time at
$k-1$ after the last visit to $k$ so that
\begin{equation}\label{eq:2}
X_{T_{j,r}}(k)=\eta_k^-(\Lambda_{j,r}(k-1)).
\end{equation}
Equation~\eqref{eq:2} is easy to check by noting that by definition of
$\eta_k^-$  and $\Lambda_{j,r}$,
$\eta_k^-(\Lambda_{j,r}(k-1))=X_{\theta^-_{L_{T_{j,r}}(k-1)}(k)}(k)$
and then, by definition of $\theta^-$, $\theta^-_{L_s(k-1)}(k)=s$.

 From~\eqref{eq:1}-\eqref{eq:2}, one gets:
\begin{equation}
  \label{eq:ray-knight-lambda}
  \Lambda_{j,r}(k) = \Lambda_{j,r}(k-1) + \eta^-_k(\Lambda_{j,r}(k-1)) - x_0(k).
\end{equation}

Therefore the local time profile on the right of $j$ is given by a walk: the 
distribution of the difference $\Lambda_{j,r}(k) - \Lambda_{j,r}(k-1)$ only
depends on the "current position" $\Lambda_{j,r}(k-1)$ and the "time" $k$. 
The size of the step is obtained by running the $\eta^-_k$ process for 
a time $\Lambda_{j,r}(k-1)$ independent of this process (since $\Lambda_{j,r}(j)=r$ and the $\eta_k^-$ are independent for different values of $k$).

Similarly the walk "on the left" is given for $k\leq j$ by
\[
  \Lambda_{j,r}(k-1) = \Lambda_{j,r}(k) + \eta^+_k(\Lambda_{j,r}(k)) + x_0(k).\]

This very precise description of the local time profile will be 
essential in our proof of Theorem~\ref{thm:Vdecreases} below. 

\subsection{On the efficiency of metadynamics in the adiabatic case}

In the adiabatic case, the results presented in Section~\ref{sec:adiabatic_cont} for the
continuous model and in Section~\ref{sec:adiabatic_disc} for the discrete model show
that the time average of the penalty term ($\Psi_t$ for the continuous
model and $L_t$ for the discrete model) converges to minus the free energy
profile in which the original non-penalized dynamics evolves (namely~\eqref{eq:OL}
and~\eqref{eq:chaine_naive}). We would like to discuss here the
efficiency of the approach to approximate the free energy, both at
equilibrium and in the transient regime, compared to an estimator
based on the non-penalized dynamics (say $-\frac{1}{\beta} \ln \int_0^t
\delta^\epsilon(Z^{OL}_t -z)$ for the continuous dynamics~\eqref{eq:OL}, and  $-
\frac{1}{2\beta} \ln \int_0^t \ind{I^{OL}_s = k} ds$ for the discrete
dynamics, $(I^{OL}_t)_{t \ge 0}$ being the
Markov process with generator $L^{OL}_{disc}$, see\eqref{eq:chaine_naive}).

First, we observed in both cases that at equilibrium, the asymptotic
variance of the estimator of the free energy derivative does not
depend on the free energy profile $F$ and $A$. This is a consequence of the
``trick'' which makes the free energy
 disappear by changing the initial condition on $\Psi_t$ and $X_t$
 (this trick was already
 mentioned in~\cite{bussi-laio-parrinello-06}). This means that even if $F$ or $A$ have
 wells separated by very large barriers, these are not seen in the
 longtime limit (which is not the case for estimators based on the non-penalized dynamics).

On the other hand, the free energy profile does influence the
transient regime (the ``burn-in'' time). Let us focus here on the
discrete model. By Proposition~\ref{thm:invariance}, the initial free energy profiles
appears in the initial condition of the process we study and the explicit bounds for the Lyapunov function~$W$
ensure that, starting from  $X_0 = A'$, the time $T$ needed to reach a compact set near $0$ 
is at most linear in $S(A')$, which is the ``computational sand'' needed
to fill the wells of $A$ (see Figure~\ref{fig:sand}): 
\[
  \prb[X_0=A']{T>t} \leq \tilde{C} \exp(-\delta t) \exp(\chi S(A')),
\]
see Theorem~\ref{thm:expMoments_continuous}.

Let us finally discuss the role of $\gamma$. One expects that, in the
transient regime, the
larger  $\gamma$, the quicker the filling of the wells. On the
other hand, the asymptotic
 variance increases as $\gamma$ increases. There is thus a balance to
 find on the deposition rate, to optimize the efficiency both in the
 transient and the asymptotic regimes (see the discussion
 after~\cite[Theorem 3.6]{fort-jourdain-kuhn-lelievre-stoltz-15}
 and~\cite[Section 5.1]{fort-jourdain-lelievre-stoltz-17}  for similar
 considerations on the stepsize sequence for algorithms with vanishing
 adaption, namely with a deposition rate such that $\gamma(t) \to 0$ when $t \to \infty$).

\section{Non consistency of metadynamics in the absence of adiabatic separation}\label{sec:non_adiabatic}

The objective of this section is to show that, for both the continuous
and the discrete metadynamics, the average of the penalization yields
a biased estimator of the free energy in the absence of adiabatic
separation. We also illustrate the fact that this bias goes to zero
when the deposition rate $\gamma$ goes to zero. 

\subsection{Continuous state space metadynamics: a numerical study}

We consider a two-dimensional potential $V:\dT\times \dR \to \dR$ where
$\dT=\dR / (2 \pi \dZ)$ denotes the interval $(-\pi,\pi]$ with periodic
boundary conditions.  The dynamics is the following:
\begin{equation}\label{eq:MD}
\left\{
\begin{aligned}
dx_t =& - \left(\partial_x V(x_t,y_t) + \frac{d\Psi_t}{dx}
  (x_t)\right) \, dt + \sqrt{2\beta^{-1}} dB^x_t \\
dy_t=& -\partial_y V(x_t,y_t) \, dt + \sqrt{2\beta^{-1}} dB^y_t\\
d\Psi_t(x) =& \frac{\gamma}{\sqrt{2 \pi} \varepsilon}
\exp\left(-\frac{(x-x_t)^2}{2 \varepsilon^2}\right)  \, dt
\end{aligned}
\right.
\end{equation}
where $\gamma > 0$ is the deposition rate and $\varepsilon >0$ is
the width of the Gaussian. Notice that $\Psi_t(x)$ is defined for $t
\ge 0$ and $x \in \dT$: the periodic images of the Gaussians are
taken into account.

Let us emphasize that we use periodic boundary conditions in the
$x$-direction in order to avoid difficulties related to the correct
implementation of boundary conditions for metadynamics, see~\cite{bussi-laio-parrinello-06,CMPL10}.

The precise form of the potential is (see Figure~\ref{fig:V})
$$V(x,y)=\cos(2x)+0.05 (y-3 \cos(2x)-3)^2 + 0.5 \sin(x).$$
The inverse temperature is $\beta^{-1}=50$.
In this setting, the free energy is analytically computable and
defined, up to an additive constant, by:
$$\forall x \in \dT, \, F(x) = - \beta^{-1} \ln \int_{\dR} \exp(-\beta
V(x,y)) \, dy = \cos(2x) + 0.5\sin(x)$$
so that its derivative is
$$\forall x \in \dT, \, F'(x)=-2 \sin(2x) + 0.5 \cos(x).$$

\begin{figure}
\centerline{\includegraphics[width=12cm]{./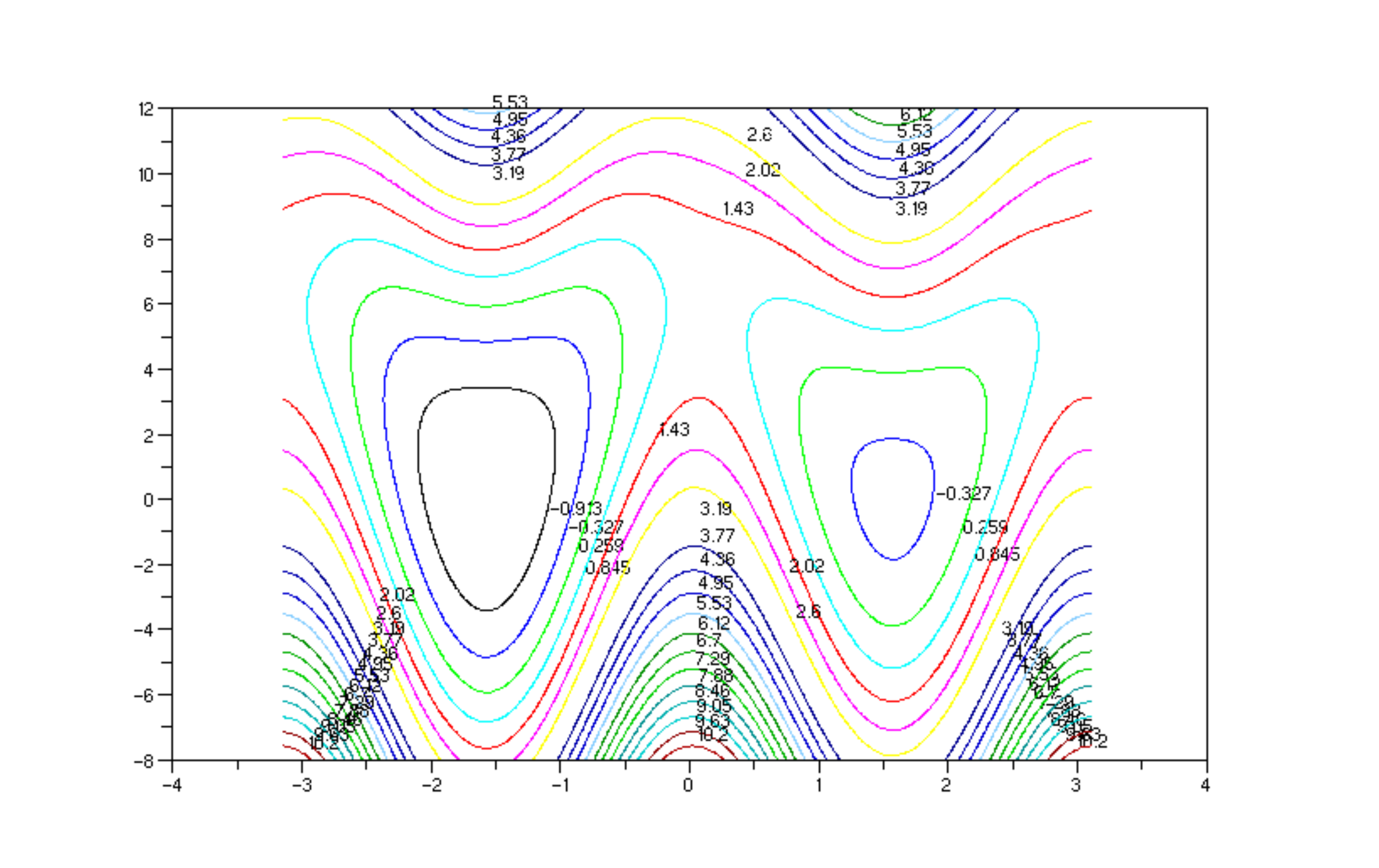}}
\caption{Contour plot of the potential $V$.}\label{fig:V}
\end{figure}

The dynamics is discretized using an Euler-Maruyama scheme with
timestep $\delta t=10^{-4}$ over the time interval $[0,10^4]$.
 We use a mesh of $(-\pi,\pi]$ with $I=40$ intervals of the same size.
 The function $x \mapsto \Psi_t(x)$ is approximated over this mesh
 using continuous piecewise affine interpolation. The width of the Gaussians is set to the size of the intervals: $\varepsilon=2\pi/I$.

In the following, we compare,  for
$x \in \dT$,  $-F'(x)$ with $\lim_{t \to \infty}
\frac{1}{t} \int_0^t \frac{d\Psi_s}{dx}(x) \, ds$. We have checked numerically
that the longtime limit is attained at $t=10^4$.

We observe on Figures~\ref{fig:1},~\ref{fig:2} and~\ref{fig:3} that the
quality of the result highly depends on the choice of the parameter
$\gamma$: as $\gamma$ decreases, the result gets closer to the
expected answer. This justifies the use of a parameter $\gamma$
depending on time $t$ and going to zero in the longtime limit
(vanishing adaption), as in
Wang Landau~\cite{wang-landau-01-PRL}, in Well Tempered Metadynamics~\cite{barducci-bussi-parrinello-08} or in Self Healing Umbrella Sampling~\cite{marsili-barducci-chelli-procacci-schettino-06}.

\begin{figure}
\centerline{\includegraphics[angle=270,width=12cm]{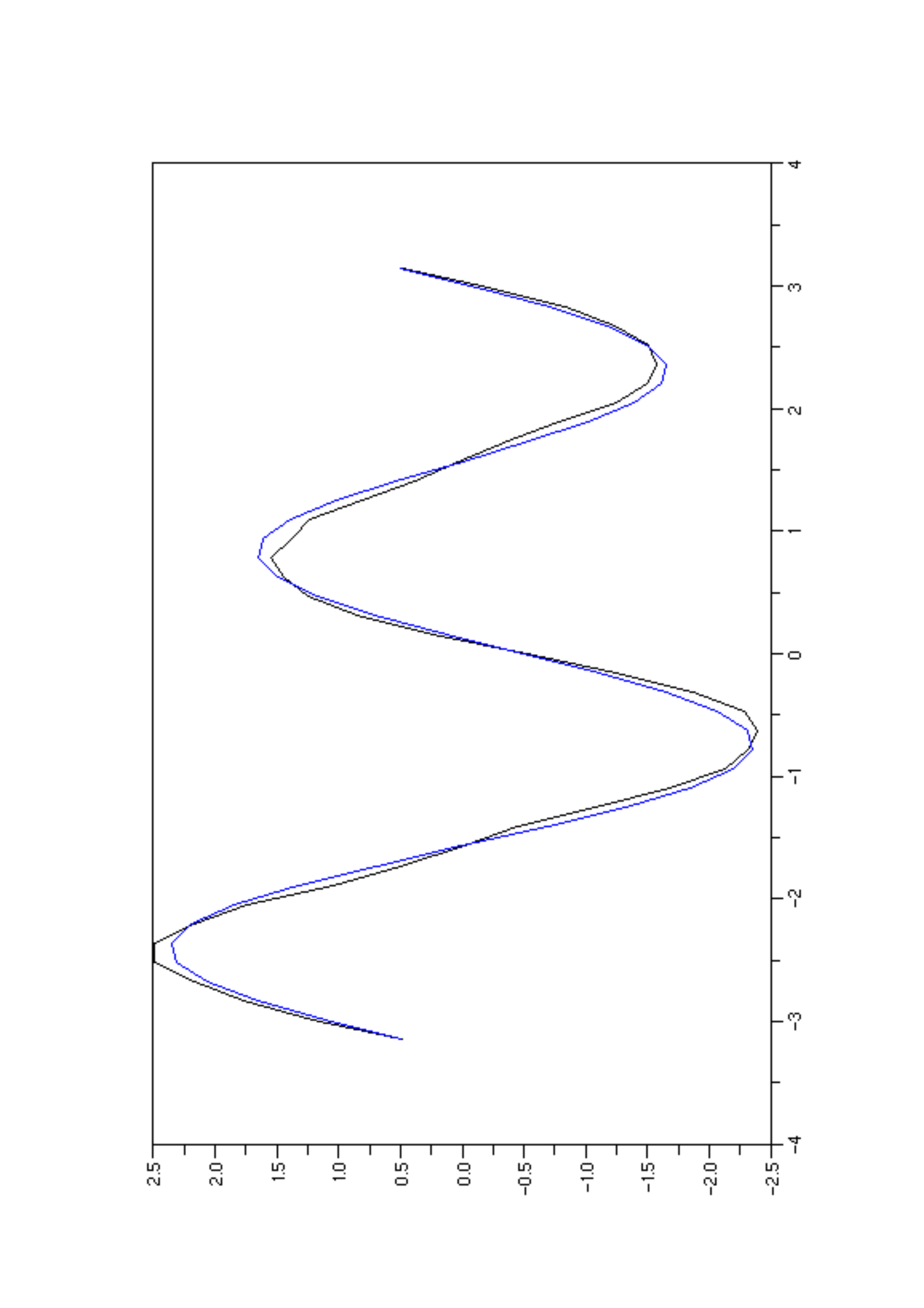}}
\caption{Comparison of the mean force profiles,  $\gamma=0.1$.  Blue:
  $-F'(x)$. Black: $\lim_{t \to \infty}
\frac{1}{t} \int_0^t \frac{d\Psi_s}{dx}(x) \, ds$.}  \label{fig:1}%
\end{figure}

\begin{figure}
\centerline{\includegraphics[angle=270,width=12cm]{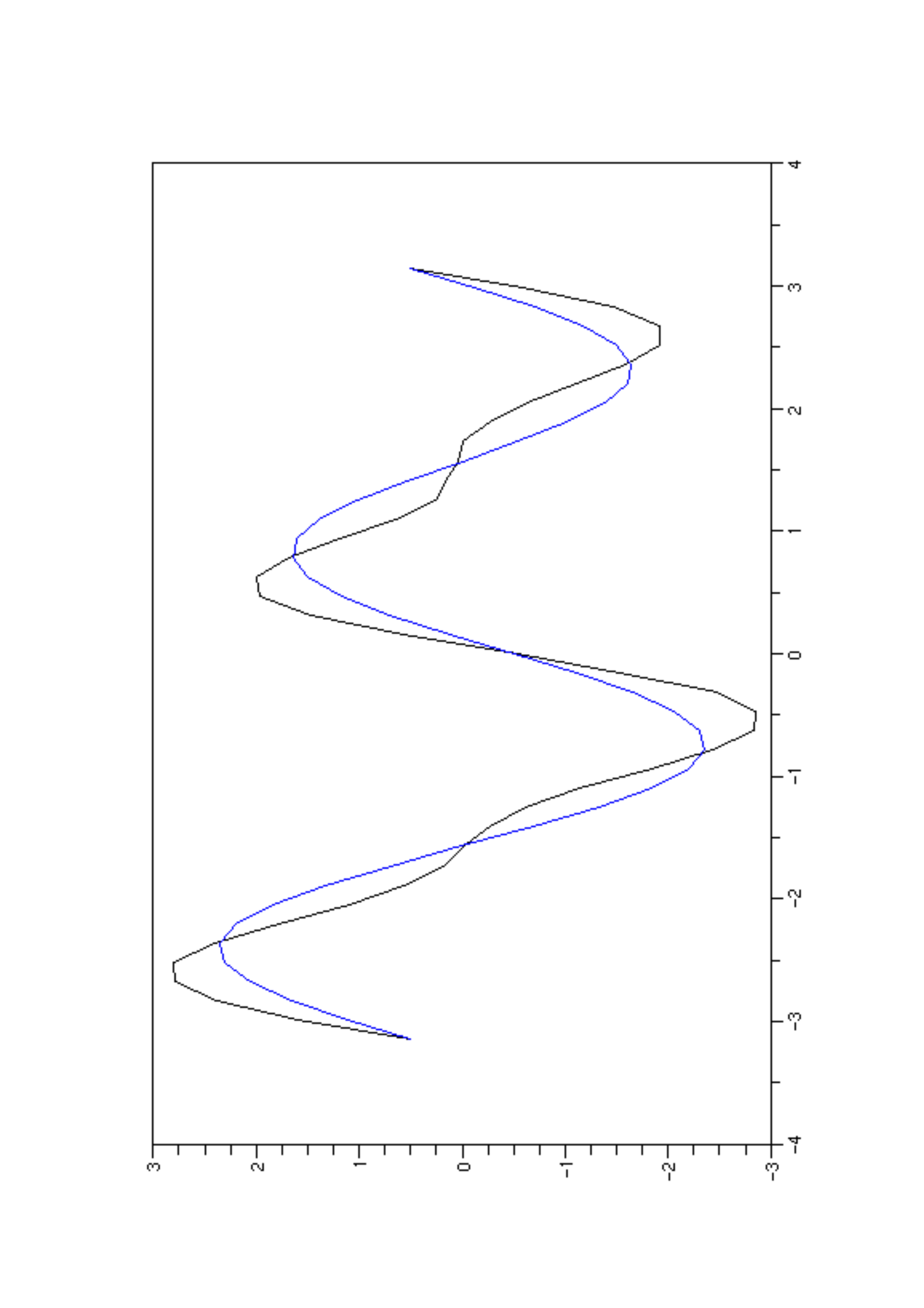}}
\caption{Comparison of the mean force profiles,  $\gamma=1$. Blue:
  $-F'(x)$. Black: $\lim_{t \to \infty}
\frac{1}{t} \int_0^t \frac{d\Psi_s}{dx}(x) \, ds$.}  \label{fig:2}%
\end{figure}

\begin{figure}
\centerline{\includegraphics[angle=270,width=12cm]{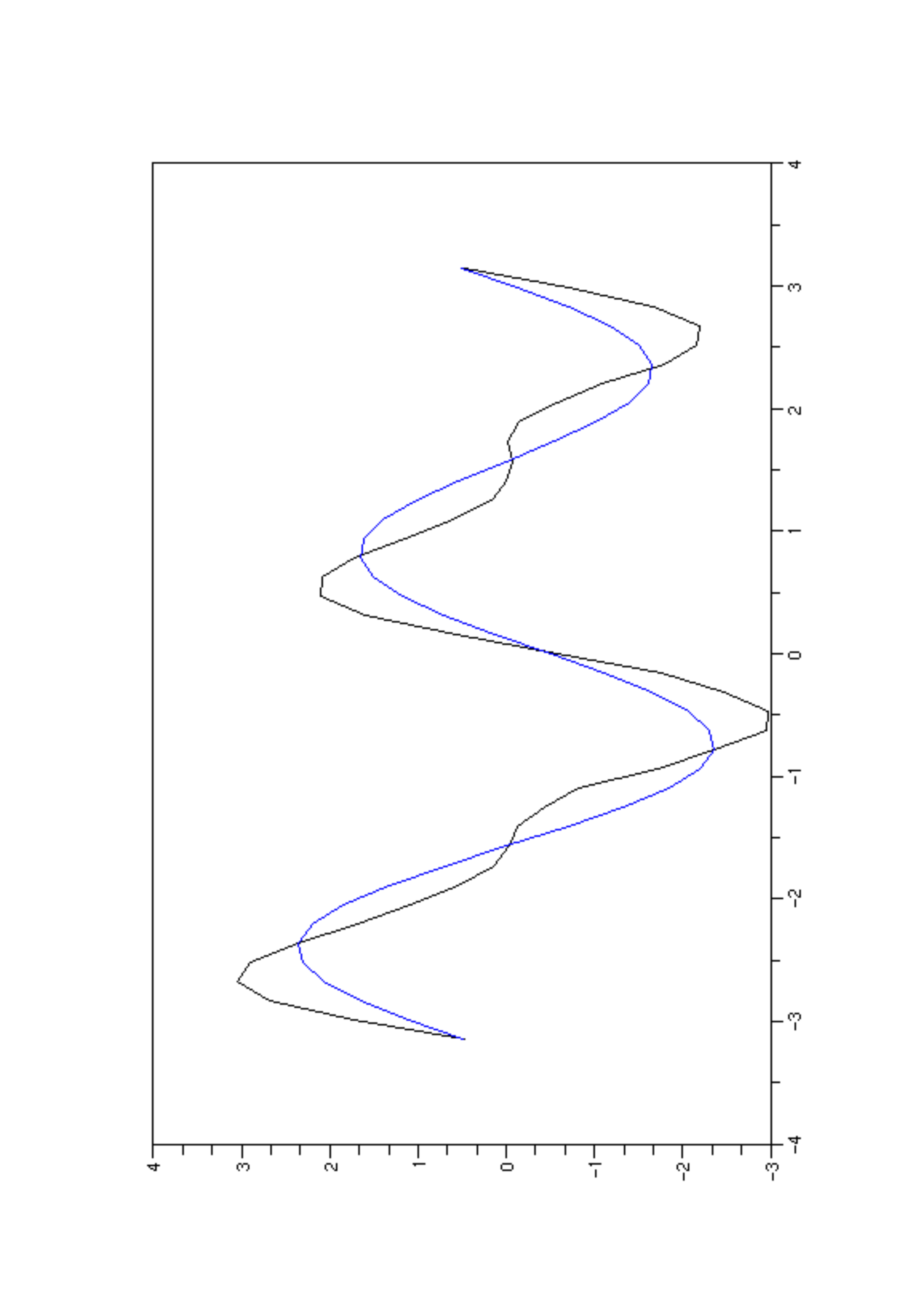}}
\caption{Comparison of the mean force profiles,  $\gamma=10$. Blue:
  $-F'(x)$. Black: $\lim_{t \to \infty}
\frac{1}{t} \int_0^t \frac{d\Psi_s}{dx}(x) \, ds$.}\label{fig:3}%
\end{figure}

\subsection{Discrete state space metadynamics: an analytical study}\label{sec:non_adiabatic_disc}

\subsubsection{The non-adiabatic metadynamics}

The non-adiabatic version of the discrete metadynamics introduced in
Section~\ref{sec:adiabatic_disc} consists in grouping states into a certain number of bins, and compute the 
bias not site by site, but bin by bin. More precisely let 
 $\xi: \{0, \ldots , K\} \to \{0, \ldots , B\}$ be a collective variable,
 with $B<K$:
  the bins are the
 levet sets $(\xi^{-1}(\{b\}))_{0\leq b\leq B}$. 
Let us  record the 
 current position of the walker $I_t \in \{0, \ldots, K\}$  and the local times spent
 in each bin $b \in \{0, \ldots, B\}$
 \[
   L_t(b) = \int_0^t \ind{\xi(I_s) = b}ds
 \]
so that $L_t=(L_t(0), \ldots, L_t(B)) \in \dR^{B+1}$. Let $V:\{0,
\ldots, K\} \to \dR$ be the potential function and let us define
$$\forall k \in \{1, \ldots, K\}, V_k'=V_k-V_{k-1}.$$
 The generator of the process $(L_t,I_t)_{t \ge 0}$ is given by: for any test
 function $f: \dR^{B+1} \times \{0, \ldots , K\} \to \dR$, for all
 $(l,k) \in \dR^{B+1} \times \{0, \ldots , K\}$,
\[
  \begin{split}
    Lf(l,k) =& \partial_{l_k} f(l,k)  \\
  & + 1_{\{k<K\}} \exp\left(-\beta ( \gamma [l(\xi(k+1)) - l(\xi(k))] +V'_{k+1})\right) \left(f(l,k+1)- f(l,k)\right) \\
  & + 1_{\{k>0\}} \exp\left(+\beta ( \gamma [l(\xi(k)) - l (\xi(k-1))]
    + V'_k) \right) \left(f(l,k-1) - f(l,k)\right).
				  \end{split}
\]
Just as in Section~\ref{sec:adiabatic_disc}, we could instead define a process acting on the differences
of local times --- since the expression of the generator is messier we omit it. 

One could hope that this process, similarly to the simpler one we described in the
Section~\ref{sec:adiabatic_disc}, "learns the free energy profile", in the sense that the difference
of local times $\gamma (L_t(b) - L_{t}(b'))$ (for $(b,b') \in \{0,\ldots,B\}^2$) fluctuates around the difference of free energy
\[ A_{b'} - A_{b} = \frac{1}{2\beta} \ln\PAR{
    \frac{\pi(\xi^{-1}(\{b\}))}{\pi(\xi^{-1}(\{b'\}))}}\]
 where for  $k \in \{0,\ldots,K\}$, $\pi(\{k\})=\frac{e^{-2\beta V_k}}{\sum_{j=0}^Ke^{-2\beta V_j}}$.
Should this hold, then the free energy profile $(A_b)_{0 \le b \le B}$ could then 
be recovered by taking time averages.

Unfortunately this is not the case. To see why, we consider a simple case 
with $4$~sites $\{0,1,2,3\}$ and two bins: $B_-=\{0,1\}$ and $B_+=\{2,3\}$. We define
the potential $V$ by $V_1 = V_2$, $V_1 - V_0 = D_- >0$ and 
$V_2 - V_3 = D_+ >0$: $V$ has two wells in $0$ and $3$, separated 
by a saddle $\{1,2\}$. We let $$\lambda_\pm = \exp(-\beta D_\pm)$$ be
the jump rates from the bottom of the wells to the saddle. 

The target measure is proportional to $\exp(-2\beta V)$; the weights 
it gives to each bin are
\begin{align*}
  \pi(B_-) =& \frac{1+\lambda_-^{-2}}{2+\lambda_-^{-2} + \lambda_+^{-2}} 
  &
  \pi(B_+) =& \frac{1+\lambda_+^{-2}}{2+\lambda_-^{-2} + \lambda_+^{-2}}
\end{align*}
so the difference of local times $X_t=\gamma ( L_t(+) - L_{t}(-))$ should fluctuate around
\begin{equation}\label{eq:FEdiff}
A_--A_+= \frac{1}{2\beta} \ln\PAR{
  \frac{1+\lambda_+^{-2}}{1+\lambda_-^{-2}}}.
\end{equation}

For this model however, the heuristic picture is very different: 
\begin{equation}\label{eq:approx}
\text{\emph{the
  trajectorial average of $(X_t)$ behaves like $\gamma\left(\frac{1}{\lambda_+} -
  \frac{1}{\lambda_-}\right)$,}}
\end{equation} at least when
$\gamma$ is sufficiently large, and this limit is indeed different
from the free energy difference~\eqref{eq:FEdiff}.
We justify this claim with an informal reasoning just below, and with 
an analytical proof in a slightly simplified setting in Section~\ref{sec:3States}.

Let us first give an idea of  the trajectories of the process in the
large $\gamma$ regime. 
  Starting from $i=0$ and $x=0$, the process:  
  \begin{enumerate}
  	\item stays in $0$ for a time $E_-$ (exponential with parameter
	  $\lambda_-$), 
	\item jumps to $1$, where it immediately jumps to $2$ (since
	  at that time, $X_t= - \gamma E_-$, with $\gamma E_-$ very large), and then quickly jumps
	  to $3$. 
  \end{enumerate}
  At this point $|X_t|$ is large. While it stays large, even if $I_t$ jumps
  to $2$ it will quickly jump back to $3$, since the jump to $1$ is 
 essentially blocked.  So the process must
  wait in  $3$ for a  time at least $E_-$, so that $X_t$ has time to decrease
  back to $0$. 
  After this, the process waits for an exponential time $E_+$ (with parameter~$\lambda_+$) until it jumps to $2$. Once more it jumps immediately
  to $1$ and then to $0$, and must stay there (with brief visits to $1$) until 
  $X_t$ comes back to $0$. 
  During such a cycle, of duration $T = 2(E_-+E_+)$,  $\int_0^T  X_s ds$ is
  approximately $- \gamma E_-^2 + \gamma E_+^2$. In expectation this yields
  $(2\gamma)(\lambda_+^{-2} -\lambda_-^{-2})$. By the ergodic theorem, since
  $\esp{T} = 2(\lambda_+^{-1} + \lambda_-^{-1})$, this gives 
  the claimed approximation~\eqref{eq:approx}: 
  $\frac{1}{t} \int_0^t X_s \, ds \approx_{t \to \infty} \gamma \left(\frac{1}{\lambda_+} -
    \frac{1}{\lambda_-} \right)$. 

  We tested this claim numerically for various values of $\gamma$, $\lambda_\pm$, 
  see Figure~\ref{fig:fourStates} for an illustration. 

  \begin{figure}
    {\centering
    \includegraphics{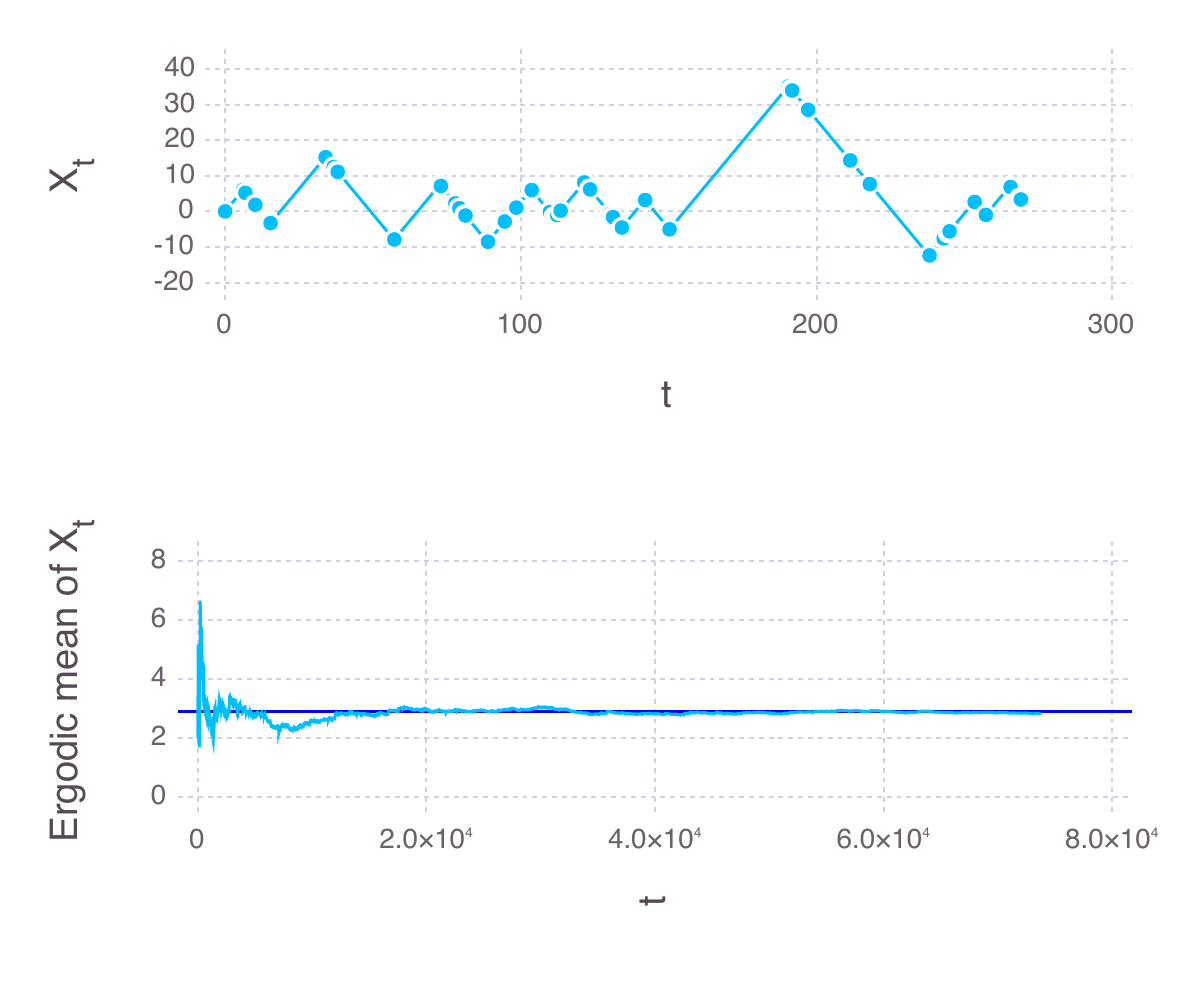}
    \par}
    An example trajectory for the four-states, two bins model, in the 
    uneven double well potential $V$ given by   $V_0=0$, 
    $V_1=V_2=2$ and $V_3=0.5$, with $\gamma = 1$ and $\beta=1$. Upper plot: the trajectory of $X_t$. The dots represent times when $I_t$ jumps. Lower plot: the ergodic mean $\frac{1}{t} \int_0^t X_s ds$. The horizontal 
    line is $\gamma\left(\frac{1}{\lambda_+} - \frac{1}{\lambda_-}\right)$. 
    \caption{The four-states, two-bins model.}
    \label{fig:fourStates}
  \end{figure}

Rather than trying to make this informal picture more precise, we simplify 
the model a bit more in the next section so that it becomes analytically tractable. 

\subsubsection{Analytical results on a simplified non-adiabatic metadynamics}
\label{sec:3States}
In order to be able to write an explicit expression for the invariant measure, we modify
the model with 4 sites and 2 bins considered in the previous section. We idealize the saddle $\{1,2\}$ and replace it by an abstract state; 
we relabel the states to $\{-,0,+\}$ where $-$ and $+$ are the two wells and $0$ is the saddle
between them. 
Let us then define the following generator: 
\begin{equation}\label{eq:Lsimp}
\begin{aligned}
  L^{simp}f(x,i) &= \gamma i\partial_x f(x,i) 
    + 1_{\{i=\pm\}}\lambda_\pm(f(x,0)-f(x,\pm))\\
&  \quad  + 1_{\{i=0\}}\left(\frac{e^{\beta x}}{\lambda_-}(f(x,-)-f(x,0))
    + \frac{e^{-\beta x}}{\lambda_+}(f(x,+)-f(x,0))\right).
\end{aligned}
\end{equation}
The measure 
\begin{equation}\label{eq:musimp1}
d\mu^{simp}(x,i)  = \delta_{-}(di)\mu_-(x)dx 
+ \delta_{0}(di) \mu_0(x)dx
+ \delta_{+}(di) \mu_+(x)dx
\end{equation}
is invariant for the dynamics with generator $L^{simp}$ if and only if
\[
  \left\{\begin{aligned}
      \gamma\mu_-'(x)-\lambda_-\mu_-(x)+\frac{e^{\beta x}}{\lambda_-}\mu_0(x) =& 0 \\
      \lambda_-\mu_-(x)+\lambda_+\mu_+(x) 
        - \left(\frac{e^{\beta x}}{\lambda_-}+\frac{e^{-\beta x}}{\lambda_+}\right)
	\mu_0(x) =& 0 \\
      \gamma\mu_+'(x)+\lambda_+\mu_+(x)-\frac{e^{-\beta x}}{\lambda_+}\mu_0(x) =& 0.
   \end{aligned}\right.
 \]
This system is equivalent to
\[
  \left\{\begin{aligned}
   \mu_0(x) =& \frac{\lambda_+\lambda_-^2\mu_-(x)+\lambda^2_ 
           + \lambda_-\mu_+(x)}{\lambda_+e^{\beta x}+\lambda_-e^{-\beta x}}\\
   \gamma \mu_-'(x) = \gamma \mu_+'(x) 
         =& \frac{\lambda_-^2e^{-\beta x}\mu_-(x)-\lambda_+^2e^{\beta x}\mu_+(x)}{\lambda_+e^{\beta x}+\lambda_-e^{-\beta x}}.
\end{aligned}\right.
\]
The last equation implies that $\mu_-(x)-\mu_+(x)$ is constant and for the invariant measure to be finite that $\mu_-(x)=\mu_+(x)$. Hence 
\[
  \left\{\begin{aligned}
  \mu_-(x) =& \mu_+(x)\\
  \mu_0(x) =& \frac{\lambda_+\lambda_-^2+\lambda^2_+\lambda_-}{\lambda_+e^{\beta x}+\lambda_-e^{-\beta x}}\mu_+(x)\\
  \gamma \mu_+'(x) =& \frac{\lambda_-^2e^{-\beta x}-\lambda_+^2e^{\beta x}}{\lambda_+e^{\beta x}+\lambda_-e^{-\beta x}}\mu_+(x).
\end{aligned}\right.
\]
Setting $\tilde{\mu}(y)=\mu_+\left(\frac{\ln(y)}{\beta}\right)$, one has
$$(\ln \tilde{\mu})'(y)=\frac{1}{\gamma\beta}\frac{\lambda_-^2-\lambda_+^2y^2}{\lambda_+y^3+\lambda_-y}=\frac{1}{\gamma\beta}\left(\frac{\lambda_-}{y}-\frac{\lambda_+(\lambda_++\lambda_-)y}{\lambda_+y^2+\lambda_-}\right).$$
One concludes that 
\begin{equation}\label{eq:musimp2}
  \mu_+(x) 
  = \frac{1}{C}e^{\frac{\lambda_-}{\gamma} x}
    (\lambda_+ e^{2\beta x}+\lambda_-)^{-\frac{\lambda_++\lambda_-}{2\beta \gamma}}
  = \frac{1}{C}\left(
    \lambda_+ e^{\frac{2\beta \lambda_+}{\lambda_++\lambda_-}x}
    + \lambda_- e^{-\frac{2\beta \lambda_-}{\lambda_++\lambda_-}x}
  \right)^{-\frac{\lambda_++\lambda_-}{2\beta \gamma}}
\end{equation}
where the constant $C$ is chosen so that $\mu$ is a probability measure.  
To simplify notation define $x_0 = \frac{1}{2\beta}
\ln(\lambda_-/\lambda_+)=\frac1 2 \left( D_+ - D_-\right)$, $d  = \lambda_+ - \lambda_-$ and $s = \lambda_++\lambda_-$. 
Then 
\[
  \lambda_+ \exp\PAR{\frac{2\beta\lambda_+ x}{\lambda_+ + \lambda_-}}
  +  \lambda_- \exp\PAR{\frac{- 2\beta\lambda_- x}{\lambda_+ + \lambda_-}}
  = \exp\PAR{\frac{\beta dx}{s}} 2\cosh(\beta(x-x_0)) \sqrt{\lambda_-\lambda_+}
\]
The density of the marginal of the invariant measure $\mu^{simp}$
on the $x$ variable is proportional to: 
\[
  f(x) = \exp\PAR{ -\frac{d(x-x_0)}{2\gamma} }
  (2\cosh(\beta(x-x_0)))^{-s/(2\beta\gamma)}
  \PAR{ 2 + \frac{s \sqrt{\lambda_-\lambda_+}}{2\cosh(\beta(x-x_0))}}. 
\]
\begin{lem}[Asymptotics for $\int x d\mu^{simp}(x,i)$]
 Assume that $D_+\neq D_-$.  Then, the average of $x$
under the
 invariant measure $d\mu^{simp}(x,i)$ defined by~\eqref{eq:musimp1}--\eqref{eq:musimp2} behaves like $\gamma\PAR{e^{\beta D_+} - e^{\beta D_-}}$ when 
  this quantity is large in absolute value, in the sense that
  \[
    \frac{1}{\gamma\PAR{e^{\beta D_+} - e^{\beta D_-}}} \int x d\mu^{simp}(x,i) 
    \to 1,
  \]
  when any one of the parameters $\beta$, $\gamma$, $D_+$ or $D_-$ goes to 
  infinity while the others stay fixed. 
\end{lem}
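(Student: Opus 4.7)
The plan is to compute the integrals exactly by Beta functions and then expand asymptotically.

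\emph{Step 1 (closed form).} The marginal of $\mu^{simp}$ on the variable $x$ has density proportional to $f$. With the substitutions $y = x - x_0$ and then $w = \beta y$,
\[
  \int x\,d\mu^{simp}(x,i) = x_0 + \frac{1}{\beta}\cdot\frac{2h_2(\alpha,\kappa) + s\sqrt{\lambda_+\lambda_-}\,h_2(\alpha+1,\kappa)}{2h_1(\alpha,\kappa) + s\sqrt{\lambda_+\lambda_-}\,h_1(\alpha+1,\kappa)},
\]
where $\alpha = s/(2\beta\gamma)$, $\kappa = d/(2\beta\gamma)$ and $h_j(\alpha,\kappa) = \int_\dR w^{j-1}\,e^{-\kappa w}(2\cosh w)^{-\alpha}\,dw$. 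Setting $a = \lambda_-/(2\beta\gamma)$ and $b = \lambda_+/(2\beta\gamma)$, the change of variable $u = 1/(1+e^{-2w})$ produces the classical Beta integral
\[
  h_1(\alpha,\kappa) = \frac{\Gamma(a)\Gamma(b)}{2\Gamma(a+b)},
\]
and differentiating in $\kappa$ under the integral gives $h_2(\alpha,\kappa) = (h_1(\alpha,\kappa)/2)(\psi(a)-\psi(b))$. The terms $h_j(\alpha+1,\kappa)$ are obtained the same way, with $a+\tfrac12, b+\tfrac12$ in place of $a,b$.

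\emph{Step 2 (asymptotic expansion).} In each of the four regimes at least one of $a, b$ tends to $0$, while the other is either vanishing too (when $\beta$ or $\gamma \to \infty$) or fixed (when $D_+$ or $D_- \to \infty$). Using $\Gamma(t) = 1/t + O(1)$ and $\psi(t) = -1/t + O(1)$ as $t \downarrow 0$, together with continuity of $\Gamma$ and $\psi$ on $(0,\infty)$, one obtains, in all four regimes (with the convention that bounded quantities are negligible compared to diverging ones),
\[
  2h_1(\alpha,\kappa) \sim \tfrac{1}{a}+\tfrac{1}{b},\qquad 2h_2(\alpha,\kappa) \sim \tfrac{1}{2b^2}-\tfrac{1}{2a^2}.
\]
The arguments of $h_j(\alpha+1,\kappa)$ stay bounded below by $\tfrac12$, so $h_1(\alpha+1,\kappa)$ is uniformly bounded and $h_2(\alpha+1,\kappa) = O(|a-b|)$ by Taylor-expanding $\psi$; hence the corrective terms weighted by $s\sqrt{\lambda_+\lambda_-}$ are of strictly lower order and can be dropped. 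Substituting and simplifying,
\[
  \frac{1}{\beta}\cdot\frac{2h_2(\alpha,\kappa)}{2h_1(\alpha,\kappa)} \;\sim\; \gamma\,(\lambda_+^{-1}-\lambda_-^{-1}) \;=\; \gamma\bigl(e^{\beta D_+}-e^{\beta D_-}\bigr).
\]

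\emph{Step 3 (conclusion and main obstacle).} The shift $x_0 = (D_+-D_-)/2$ is at most linear in $D_\pm$ and independent of $\beta,\gamma$, whereas $\gamma(e^{\beta D_+}-e^{\beta D_-})$ grows exponentially in $\beta$ or $D_\pm$ and linearly in $\gamma$, so it is negligible; dividing by $\gamma(e^{\beta D_+}-e^{\beta D_-})$ then gives the announced limit $1$. The most delicate bookkeeping lies in the asymmetric regimes $D_\pm \to \infty$, where only one of $a, b$ vanishes: there the expansions of $2h_1$ and $2h_2$ collapse to their single diverging term (e.g.\ $2h_1 \sim 1/b = 2\beta\gamma e^{\beta D_+}$ and $2h_2 \sim 1/(2b^2) = 2\beta^2\gamma^2 e^{2\beta D_+}$ when $D_+\to\infty$), but a direct check shows the ratio is still $\gamma e^{\beta D_+}$, which matches $\gamma(e^{\beta D_+}-e^{\beta D_-})$ asymptotically; the case $D_- \to \infty$ is symmetric.
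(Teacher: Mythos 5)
Your proof is correct and takes a genuinely different route from the paper. The paper performs a single affine change of variable $x = x_0 - \frac{\gamma d}{\lambda_-\lambda_+}y$ (the scaling is chosen so that the target quantity becomes $\frac{\int y g(y)\,dy}{\int g(y)\,dy}$), identifies a pointwise double-exponential limit $h(y)$ of the rescaled density, checks that $\int yh = \int h$ by a direct computation, and concludes by dominated convergence. You instead reduce the ratio to exact Beta/digamma expressions: after recentering at $x_0$ and rescaling by $\beta$, the integrals $h_1, h_2$ are Beta functions and their $\kappa$-derivatives, with parameters $a=\lambda_-/(2\beta\gamma)$, $b=\lambda_+/(2\beta\gamma)$; the asymptotics then follow from the pole expansions of $\Gamma$ and $\psi$ at $0$. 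Both approaches are sound. The paper's is shorter and manifestly uniform across the four limiting regimes; yours gives a closed form for the mean (useful for sharper error bounds, or for other moments) at the cost of more bookkeeping—you are right that the delicate point is that in the regimes $D_\pm\to\infty$ only one of $a,b$ vanishes and the dominant term in $\psi(a)-\psi(b)$ is one-sided, but your Step~3 handles this correctly, as does the observation that the correction terms $s\sqrt{\lambda_+\lambda_-}\,h_j(\alpha+1,\kappa)$ are genuinely of strictly lower order than the main terms in all four regimes.
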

\begin{rem} In the 
  symmetric case $D_- = D_+$, the mean value of $x$ is zero. 
\end{rem}
\begin{proof}
 We wish to compute the limit  of \( \displaystyle m(\beta,\gamma,D_+,D_-) =  \frac{\int xf(x) dx}{ \int f(x) dx}\) 
 when one of the parameters $\beta$, $\gamma$, $D_+$ or $D_-$  goes to infinity. 
We 
change variables by 
setting $x = x_0 + \gamma\PAR{\frac{1}{\lambda_+} - \frac{1}{\lambda_-}} y
 = x_0 - \frac{\gamma d}{\lambda_-\lambda_+} y$. 
Let $g(y) = f\PAR{x_0 - \frac{\gamma d}{\lambda_-\lambda_+} y}$. 
Then 
\[
  m(\beta,\gamma,D_-,D_+) = x_0 - \frac{\gamma d}{\lambda_-\lambda_+} \frac{\int yg(y) dy}{\int g(y)dy},
\]
where 
\[ 
  g(y) = 
  \exp\PAR{ \frac{d^2}{2\lambda_-\lambda_+} y}
  \PAR{2\cosh \PAR{ \frac{\beta\gamma d}{\lambda_-\lambda_+} y }}^{-s/(2\beta\gamma)}
  \PAR{2 + \frac{ s \sqrt{\lambda_-\lambda_+}}
                {2\cosh\PAR{\frac{\beta \gamma d}{\lambda_-\lambda_+} y}}%
      }. 
\]
We thus have, since $\gamma(e^{\beta D_-}-e^{\beta D_+})=\frac{\gamma d}{\lambda_-\lambda_+}$:
\[ 
  \frac{1}{\gamma\PAR{e^{\beta D_+} - e^{\beta D_-}}} m(\beta,\gamma,D_-,D_+)
  = 
  \frac{D_+-D_-}{2 \gamma\PAR{e^{\beta D_+} - e^{\beta D_-}}} + \frac{\int yg(y) dy}{\int g(y)dy}.\]

For any of the limits we consider, the first term in the right-hand
side goes to zero, and for any fixed $y$, $\frac{\gamma d}{\lambda_-\lambda_+}$
goes to infinity so that
\begin{align*}
  g(y) &\sim 2 
  \exp\PAR{ \frac{d^2}{2\lambda_-\lambda_+} y}
  \PAR{\exp\PAR{ - \frac{ s\abs{d}}{2\lambda_-\lambda_+} \abs{y} }} \\
  &\sim \begin{cases} 
    2\exp\PAR{ - \frac{\abs{d}}{\max(\lambda_-,\lambda_+)}  \abs{y}}, & \text{ if } y>0 \\
    2\exp\PAR{ - \frac{\abs{d}}{\min(\lambda_-,\lambda_+)}\abs{y}}, & \text{ if } y<0.
  \end{cases}
\end{align*}
Let us write $h(y)$ for the last function appearing in these equivalences. 

Consider first the $\gamma\to\infty$ limit. The function $h(y)$ 
 does not depend on $\gamma$, and a 
direct computation shows that:
\begin{align*}
  \int y h(y) dy 
  =& \frac{2}{\abs{d}^2}\PAR{\max(\lambda_-,\lambda_+)^2 - \min(\lambda_-,\lambda_+)^2} \\
  =& \frac{2}{\abs{d}}\PAR{\max(\lambda_-,\lambda_+) + \min(\lambda_-,\lambda_+)} 
  =\int h(y) dy. 
\end{align*}
Since $yg(y)$ is easy to dominate, we get  by Lebesgue's theorem:
\[ 
  \frac{1}{\gamma\PAR{e^{\beta D_+} - e^{\beta D_-}}} m(\beta,\gamma,D_-,D_+)
\to \frac{\int yh(y)dy}{\int h(y)dy} = 1.
\]

For the other cases, where one of  $\beta$, $D_-$ or $D_+$ converges to $\infty$ 
with all other parameters fixed, we compute one step further to get
\begin{align*}
  g(y) \sim h(y) 
  \to 2\ind{y>0} \exp\PAR{ - y}. 
\end{align*}
We apply Lebesgue's theorem once more:  $\int yg(y)dy/\int g(y)dy$ converges to
$\int_0^\infty y e^{-y} dy/\int_0^\infty e^{-y}dy = 1$, 
and the result follows. 
\end{proof}

The fact that the trajectorial average of $X_t$ converges to $\int xd\mu^{simp}(x,i)$ 
is guaranteed by the following result. 
\begin{lem}
The process $(X_t,I_t)$ with generator $L^{simp}$ defined
by~\eqref{eq:Lsimp} is  exponentially ergodic (in the sense of
Equation~\eqref{eq:experg}) with respect to the
measure $\mu^{simp}$ defined
by~\eqref{eq:musimp1}--\eqref{eq:musimp2}. Furthermore we have the convergence of
the ergodic mean: 
\[ \frac{1}{t} \int_0^t X_s ds \xrightarrow{\text{a.s.}} \int x d\mu^{simp}(x,i).\]  
\end{lem}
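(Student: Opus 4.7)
The plan is to apply the Foster--Lyapunov/Harris strategy for PDMPs (see, e.g., \cite{BLMZ12,BLMZ15}): invariance of $\mu^{simp}$ is already ensured by the derivation preceding~\eqref{eq:musimp1}--\eqref{eq:musimp2}, so what remains is to combine a drift condition $L^{simp}W\le -\alpha W + b\,\mathbf{1}_K$ outside a compact $K\times\{-,0,+\}$ with a Doeblin minorization on $K$. Once exponential ergodicity~\eqref{eq:experg} is in hand, the convergence of the ergodic mean follows from the ergodic theorem applied to $f(x,i)=x$, which is $\mu^{simp}$-integrable: the explicit density~\eqref{eq:musimp2} has tails $e^{-\lambda_+ x/\gamma}$ as $x\to+\infty$ and $e^{\lambda_- x/\gamma}$ as $x\to-\infty$, so $|x|\le C_\kappa W(x,i)$ and $\int W\,d\mu^{simp}<\infty$ with the Lyapunov function $W$ below.

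For the Lyapunov function I would fix $\kappa\in\bigl(0,\min(\lambda_+,\lambda_-)/\gamma\bigr)$ and set
\[ W(x,i) = e^{\kappa|x|}\bigl(1+a_i(x)\bigr), \]
where $a_i(x)$ is a bounded, piecewise constant (smoothed near $x=0$) correction depending on the sign of $x$: for $x>0$ take $a_+(x)$ large, $a_0(x)$ moderate, $a_-(x)=0$, and symmetrically for $x<0$. A region-by-region computation of $L^{simp}W$ then shows the following. At $(x,+)$ with $x>0$, the positive flow contribution $\gamma\kappa(1+a_+)e^{\kappa x}$ is absorbed by the jump term $\lambda_+(a_0-a_+)e^{\kappa x}$, provided $\gamma\kappa<\lambda_+$ and $a_+$ is large enough compared to $a_0$. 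At $(x,-)$ with $x>0$ the flow already contributes $-\gamma\kappa W$, and the jump term $\lambda_- a_0\,e^{\kappa x}$ is controlled by taking $a_0$ small. At $(x,0)$ with $x>0$ only the jumps contribute, and the dominant rate $e^{\beta x}/\lambda_-$ multiplies the negative quantity $a_--a_0$, yielding a strong negative drift of order $-e^{(\beta+\kappa)x}$. These three inequalities and their mirror images for $x<0$, together with the absorption of a neighborhood of $x=0$ into $K$, give the Foster--Lyapunov inequality.

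The Doeblin minorization on $K$ comes from PDMP controllability: the three constant vector fields $F_\pm=\pm\gamma$ and $F_0=0$ generate a transitive control system on $\dR$ by appropriate switching of the discrete coordinate, and the jump rates appearing in $L^{simp}$ are continuous and uniformly positive on $K$. Standard arguments (cf.\ \cite{BLMZ12,BH12}) then show that for some $t_0>0$, $\rho>0$ and a probability measure $\nu$, $P_{t_0}((x,i),\cdot)\ge \rho\,\nu$ uniformly for $(x,i)\in K\times\{-,0,+\}$. Harris' theorem applied to the drift and the minorization yields~\eqref{eq:experg} and the uniqueness of $\mu^{simp}$, and the $\dP_\nu$-a.s.\ convergence of $\frac{1}{t}\int_0^t X_s\,ds$ to $\int x\,d\mu^{simp}(x,i)$ then follows from the ergodic theorem.

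The main obstacle is tuning the constants in the correction $a_i$ so that the three inequalities above hold \emph{simultaneously}. The most delicate one is the state-$0$ inequality: since there is no flow contribution at state $0$, the sign of the drift is governed entirely by $W(x,\pm)-W(x,0)$ multiplied by the huge rates $e^{\beta|x|}/\lambda_\mp$, which forces the ordering $a_0(x)>a_-(x)$ for $x>0$ (and $a_0(x)>a_+(x)$ for $x<0$). Reconciling this with the conditions at states $\pm$ requires $\kappa$ small (so that $\gamma\kappa<\min(\lambda_+,\lambda_-)$) together with $a_0$ small and $a_\pm$ large; a careful bookkeeping of the six regions shows that such a choice exists.
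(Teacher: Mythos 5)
Your proposal is correct and follows essentially the same route as the paper: a Foster--Lyapunov drift inequality with a Lyapunov function of the form (state-and-sign-dependent prefactor)\,$\times\, e^{\text{const}\cdot|x|}$ (the paper's $W(x,i)=C_i e^{\alpha x}$ on $\dR_+$ with $0<C_-<C_0<C_+$ is the same construction as your $(1+a_i(x))e^{\kappa|x|}$ with $a_+>a_0>a_-=0$ for $x>0$), combined with a PDMP/controllability minorization on a compact set and the Harris-type ergodic theorem, and then the convergence of the ergodic average. The only cosmetic difference is that the paper defines $W$ separately on $\dR_+$ and $\dR_-$ (allowing different exponents on each side) while you use a single $\kappa<\min(\lambda_+,\lambda_-)/\gamma$ via $|x|$; both yield the needed drift condition.
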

\begin{proof}
We build a function $W$ that satisfies
\[ L^{simp} W(x,i) \leq - \delta W(x,i) + M \ind{\abs{x} \leq C}.\]
If the process starts in $\{-,0,+\}\times [C,\infty)$, it cannot
reach $\{-,0,+\}\times (-\infty,-C]$ without coming back in the compact $\{-,0,+\} \times [-C,C]$. 
Therefore we may define $W$ separately on $\{-,0,+\}\times \dR_+$ and $\{-,0,+\}\times \dR_-$. 
We only do the first case, the second being similar. 
On $\dR_+\times \{-,0,+\}$, let $W(x,i) = C_ie^{\alpha x}$ where the constants
satisfy $0<C_-<C_0<C_+$ and
\[ \lambda_-\left( \frac{C_0}{C_-} - 1\right) < \gamma\alpha < \lambda_+ \PAR{1 - \frac{C_0}{C_+}}.\]
Such a choice is always possible by choosing $C_0=1$, $C_+=2$ and $C_-$ close enough to $1$. 
A simple computation yields
\begin{align*}
  L^{simp}W(x,+) =& \gamma\alpha C_+ e^{\alpha x} + \lambda_+(C_0e^{\alpha x} - C_+e^{\alpha x}) \\
  =& - \PAR{ \lambda_+\PAR{1- \frac{C_0}{C_+}} - \gamma\alpha} W(x,+), \\
  L^{simp}W(x,0) =& \frac{e^{\beta x}}{\lambda_-}(C_- - C_0) e^{\alpha x} + \frac{e^{-\beta x}}{\lambda_+} (C_+ - C_0) e^{\alpha x} \\
  =& - \PAR{ \frac{e^{\beta x}}{\lambda_-} \PAR{1 - \frac{C_-}{C_0}} 
            - \frac{e^{-\beta x}}{\lambda_+}\PAR{\frac{C_+}{C_0} -1}} W(x,0) \\
  L^{simp}W(x,-) =& - \gamma\alpha C_- e^{\alpha x} + \lambda_-\PAR{C_0 e^{\alpha x} - C_- e^{\alpha x}} \\
          =& - \PAR{\gamma\alpha - \lambda_-\PAR{\frac{C_0}{C_-} - 1}} W(x,-)
\end{align*}
so that $L^{simp}W\leq - \delta W + C \ind{\abs{x} \leq C}$ for appropriate choices of $C<\infty$ and $\delta>0$. 

The exponential ergodicity follows by classical arguments that will be
developed below  in the 
proof of Theorem~\ref{thm:cvg_vt}: the process is irreducible, aperiodic
and the compact set $\{-,0,+\}\times [-C,C]$ is petite, so 
the ergodicity follow from~\cite[Theorem 5.2 (c)]{DMT95}.

The fact that the ergodic mean converges then follows from the fact that $(x,i)\mapsto x$
is locally bounded, so that Equation~\eqref{eq:finite_along_trajectories} holds, and
we can follow the same proof as for Theorem~\ref{th:ergo}. 
\end{proof}

\section{Mathematical analysis of the adiabatic discrete case}\label{sec:proof_discrete}
We prove the main mathematical results, Theorems~\ref{thmexperg}, \ref{th:ergo}  and
\ref{thm:clt}, as follows. 
We start in Section~\ref{sec:invariant} by checking the explicit expression of the invariant
measure.
In Section~\ref{sec:separated},  we state that there exists an
explicit Lyapunov
function~$W$ such that, for ``nice'' starting points $(x,i)$, $P_tW(x,i) \leq
\kappa W(x,i)$ (where $P_t$ is the transition operator associated with
the infinitesimal generator~\eqref{eq:generator_metadynamics_flat}). This is the content of Theorem~\ref{thm:Vdecreases}, the
proof of which is postponed to Section~\ref{sec:Vdecreases} for ease of reading. 
Using this, we prove in Section~\ref{sec:expMoments} that $W$ is a Lyapunov
function for an auxiliary, discrete time Markov chain, from which we
deduce the control on return times stated in Theorem~\ref{thm:expMoments_continuous}. 
We proceed to the proof of the main results,  Theorems~\ref{thmexperg}, \ref{th:ergo} and \ref{thm:clt}, 
in Section~\ref{sec:prf_clt}. 
Finally, we establish in Section~\ref{sec:auxiliary_process} precise bounds
for  the process appearing in the Ray-Knight representation already stated in Section~\ref{sec:RayKnight}; in turn, these
bounds enable us to prove the  key intermediate result
Theorem~\ref{thm:Vdecreases} in Section~\ref{sec:Vdecreases}. 

\subsection{The invariant measure}
\label{sec:invariant}
We prove here the explicit expression of the invariant measure
announced in Proposition~\ref{prop:mu}.
  Let $f:\dR^K\times\{0,\hdots,K\}\to\dR$ be a nice function (say, for each $k\in\{0,\hdots,K\}$, $x\mapsto f(x,k)$ is smooth with compact support)
  and let us compute $\int Lf d\mu$. 
  \begin{align*}
    Z\times \int Lf(x,k) d\mu(x,k)
    =& \gamma\int\sum_{0<k\leq K} \partial_kf (x,k) \exp( - \sum_j g_j(x_j)) dx_1\cdots dx_K
    \\
    &\quad 
      -  \gamma \int \sum_{0\leq k< K}\partial_{k+1}f (x,k)  \exp( - \sum_j g_j(x_j)) dx_1\cdots dx_K \\
    &\quad 
    + \int\sum_{0\leq k < K}  e^{-\beta (x_{k+1}+A'_{k+1})} f(x,k+1) dm(x)
      - \int\sum_{0\leq k< K}  e^{-\beta( x_{k+1}+A'_{k+1})} f(x,k) dm(x) \\
    &\quad +
    \int\sum_{0<k\leq K} e^{\beta(x_{k}+A'_k)} f(x,k-1) dm(x)
      - \int\sum_{0<k\leq K} e^{\beta (x_{k}+A'_k)} f(x,k) dm(x).
    \end{align*}
    In the first two terms, we swap the sum and integral signs and integrate by parts,
    respectively on the $x_k$ and  $x_{k+1}$  variables. In the third and fifth term
    we shift indices to get:
  \begin{align*}
    Z\times \int Lf(x,k) d\mu (x,k)
    &= \gamma\int\sum_{0<k\leq K} f (x,k)g_k'(x_k)  dm(x)
      -  \gamma\int \sum_{0\leq k< K} f(x,k) g_{k+1}'(x_{k+1}) dm(x) \\
    &\quad 
    + \int\sum_{0< k \leq K}  e^{-\beta (x_{k}+A'_{k})} f(x,k) dm(x)
      - \int\sum_{0\leq k< K}  e^{-\beta( x_{k+1}+A'_{k+1})} f(x,k) dm(x) \\
    &\quad +
    \int\sum_{0\leq k< K} e^{\beta(x_{k+1}+A'_{k+1})} f(x,k) dm(x)
      - \int\sum_{0<k\leq K} e^{\beta (x_{k}+A'_k)} f(x,k) dm(x) \\
    &=\int\sum_{0<k\leq K} f (x,k)\PAR{
      \gamma g_k'(x_k) +  e^{-\beta (x_{k}+A'_{k})} - e^{\beta (x_{k}+A'_k)}
      }	dm(x)  \\
    &\quad   +  \int \sum_{0\leq k< K} f(x,k) \PAR{ 
      -\gamma  g_{k+1}'(x_{k+1})  - e^{-\beta( x_{k+1}+A'_{k+1})} 
      + e^{\beta(x_{k+1}+A'_{k+1})} }
    dm(x)
    \end{align*}
    by regrouping terms in the sums. 
    Since $\gamma g'_k(y) = e^{\beta (y+A'_k)} - e^{-\beta ( y+A'_k)}$,
    the terms between brackets vanish, so $\int Lf(x,k) d\mu(x,k) = 0$ and
    $\mu$ is invariant.

\subsection{Bounds for \texorpdfstring{$t$}{t}-separated starting points}
\label{sec:separated}

\begin{defi}[Separated configuration and plateau]
  \label{def:separated}
 Let $A>a>0$ and $t >0$.
\begin{itemize}
   \item A configuration $(x,i)\in \dR^K\times\{0,\ldots,K\}$ (or a vector $x\in\dR^K$) is called \emph{$(t,a,A)$-separated}
  if the components of $x$ are either very large or very small, in the sense that
  \[
    \forall j\in\{1,\ldots,K\}, \quad
    \abs{x_j} \leq a t 
    \text{ or }
    \abs{x_j} >At,
  \]
  and at least one of the $|x_j|$ is larger than $At$. 
\item A connected set of sites 
  $I=\{l,l+1, \ldots,r-1\}\subset\{0,\ldots,K\}$ (for $l \le r-1$) is called a
  \emph{plateau} of a $(t,a,A)$-separated vector $x\in\dR^K$ if \begin{itemize}
\item $\abs{x_j}<at$ for all $l<j<r$,
\item when $l\ge 1$, $x_l>At$,
\item when $r\le K$, $x_r<-At$. \end{itemize}
\end{itemize}
 \end{defi}
For a vecteur $x \in \dR^K$,
let us consider, for $k \in \{0, \ldots, K\}$, $l_k=\sum_{m=1}^k x_m$: a plateau is a connected set
of sites where $l$ does not vary too much, with on the boundaries of the
plateau a
drop to lower values. In the following, we will refer to sites
which are not on a plateau as wells. We refer to Figure~\ref{fig:sand} for a schematic representation of
a separated vector, with its plateaux. 
\begin{lem}
    Any $(t,a,A)$-separated vector $x\in\dR^K$ admits a plateau.
\end{lem}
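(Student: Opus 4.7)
The plan is to reduce the existence of a plateau to a short combinatorial case analysis on the signs of the ``large'' components of $x$.

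First I would identify the set of indices carrying a large component. Let $J = \{j \in \{1, \ldots, K\} : |x_j| > At\}$, which is nonempty by the separation hypothesis. Write $J = \{j_1 < j_2 < \cdots < j_m\}$ with $m \geq 1$, and set $s_i = \operatorname{sign}(x_{j_i}) \in \{+,-\}$. Every index $j \in \{1, \ldots, K\} \setminus J$ satisfies $|x_j| \leq at$ by the alternative in the definition of $(t,a,A)$-separated.

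Next I would enumerate the natural plateau candidates, namely the maximal runs of non-large indices, and record when each such candidate satisfies the three conditions in Definition~\ref{def:separated}:
\begin{itemize}
\item $I_0 = \{0, 1, \ldots, j_1 - 1\}$ (with $l = 0$, $r = j_1$): valid iff $s_1 = -$, because the left condition is vacuous and the right condition requires $x_{j_1} < -At$;
\item $I_i = \{j_i, j_i+1, \ldots, j_{i+1}-1\}$ for $1 \leq i < m$ (with $l = j_i$, $r = j_{i+1}$): valid iff $s_i = +$ and $s_{i+1} = -$;
\item $I_m = \{j_m, j_m + 1, \ldots, K\}$ (with $l = j_m$, $r = K+1$): valid iff $s_m = +$, because the right condition is then vacuous.
\end{itemize}
In each case the interior indices lie in $\{1,\ldots,K\} \setminus J$, so the interior smallness requirement is automatic.

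Finally I would perform the case analysis on $(s_1, s_m)$. If $s_1 = -$ the candidate $I_0$ is a plateau; if $s_m = +$ the candidate $I_m$ is a plateau. Otherwise we have $s_1 = +$ and $s_m = -$ (so in particular $m \geq 2$), and the sign sequence $(s_1, \ldots, s_m)$ must descend from $+$ to $-$ somewhere: there exists $1 \leq i < m$ with $s_i = +$ and $s_{i+1} = -$, giving the plateau $I_i$. This exhausts all cases, so a plateau always exists. There is no real obstacle here; the only care needed is to check that the boundary cases $l=0$ and $r = K+1$ correctly trivialize, respectively, the $x_l > At$ and $x_r < -At$ conditions.
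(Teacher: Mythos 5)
Your proof is correct. The paper establishes the lemma by induction on $K$: it looks at the \emph{smallest} index $\underline{j}$ with $|x_{\underline{j}}| > At$, handles directly the two cases $x_{\underline{j}} < -At$ (left segment is a plateau) and $x_{\underline{j}} > At$ with no further large component (right segment is a plateau), and otherwise recurses on the shorter vector $(x_{\underline{j}+1},\ldots,x_K)$. You instead enumerate \emph{all} large indices $j_1 < \cdots < j_m$ at once and observe that the sign sequence $(s_1,\ldots,s_m)$ must either begin with $-$, end with $+$, or contain a consecutive pair $(+,-)$, each of which yields a plateau. The combinatorial content is the same (the paper's recursion, when unwound, is essentially a search for your sign descent), but your version is non-inductive and arguably more transparent: it cleanly isolates the observation that a plateau is exactly a maximal run of small indices bracketed appropriately by the large ones, and it reduces existence to the trivial fact that a $\{+,-\}$-sequence cannot avoid all three of ``starts with $-$'', ``ends with $+$'', ``descends somewhere''. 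Both proofs share the same cosmetic imprecision, inherited from Definition~\ref{def:separated}: the plateau condition requires $|x_j| < at$ strictly while separation only gives $|x_j| \leq at$; this is harmless and could be fixed by making the inequalities match in either definition.
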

\begin{proof}We proceed by induction on $K$.
If $K=1$, then either $x_1<-At$ and $I=\{0\}$ is a plateau or $x_1>At$ and $I=\{1\}$ is a plateau. 
Let us assume that the result holds when $1\le K<\bar{K}$ with $\bar{K}\ge 2$ and let $(x,i)\in \dR^{\bar{K}}\times\{0,\ldots,\bar{K}\}$ be $(t,a,A)$-separated.
Let $\underline{j}=\min\{j\ge 1:|x_j|>A t\}$. If $x_{\underline{j}}<-At$, then $I=\{0,\ldots,\underline{j}-1\}$ is a plateau. If $x_{\underline{j}}>At$, then either $\max_{\underline{j}+1\le j\le \bar{K}}|x_j|\le a t$ and $I=\{\underline{j},\ldots,\bar{K}\}$ is a plateau or $\max_{\underline{j}+1\le j\le \bar{K}}|x_j|> At$. In the latter case, $(x_{\underline{j}+1},\ldots,x_{\bar{K}})$ is $(t,A,a)$-separated and admits a plateau by the induction hypothesis at rank $\bar{K}-\underline{j}<\bar{K}$. The translation by $\underline{j}$ of the indices of this plateau form a plateau of $(x_0,\ldots,x_K)$ since $x_{\underline{j}}>At$.
\end{proof}

Starting from a $(t,a,A)$-separated configuration $(x,i)$, the process with generator~\eqref{eq:generator_metadynamics_flat} is highly 
predictable on the $t$ time scale:
if $a$ is small enough and $A$ large enough, 
then during a time $t$, the process will (almost always) either jump
to to lower energy sites or jump over small barriers of size $at$
but (almost never) over large barriers of size $At$. This is because
over the time $t$, the process cannot fill the wells of the initial configuration.
Let us state this fact more precisely. For a configuration
$x$, let 
\begin{equation}\label{eq:antiderivative}
(l_0,\ldots,l_K)=(l_0,l_0+x_1,x_1+x_2,\ldots,l_0+x_1+\ldots+x_K)
\end{equation}
be a discrete antiderivative of $x$ (where $l_0$ is completely
arbitrary and plays no role in the dynamics and in the value of $S(x)$) and
let us recall the definition of the function $S$ introduced
in~\eqref{eq:S}, written here in terms of the antiderivative $l$:
\begin{equation}\label{eq:Sl}
  S(x) = \sum_{i=0}^K \left(\left(\max_{0\le j\le K} l_j\right) - l_i\right).
\end{equation}
This represents the amount of "computational sand" needed to fill all the gaps in $x$
(see Figure~\ref{fig:sand}). Notice that 
\begin{equation}
   \label{eqborns} S(x)\le K \sum_{k=1}^K|x_k|.
\end{equation}

\begin{figure}
  \centering
\begin{tikzpicture}
    \begin{axis}[ybar stacked,
        bar width=0.8]
    \addplot coordinates
        {(0,3.1) (1,3) (2,3.2) (3,1) (4,4) (5,3.8) (6,2) (7,2.2) (8,2.1) (9,2)};
    \end{axis}
\end{tikzpicture}
\hspace{2em}
\begin{tikzpicture}
    \begin{axis}[ybar stacked,
        bar width=0.8]
    \addplot coordinates
        {(0,3.1) (1,3) (2,3.2) (3,1) (4,4) (5,3.8) (6,2) (7,2.2) (8,2.1) (9,2)};
    \addplot coordinates
        {(0,0.9) (1,1) (2,0.8) (3,3) (4,0) (5,0.2) (6,2) (7,1.8) (8,1.9) (9,2)};
    \end{axis}
\end{tikzpicture}

{\small
On both pictures the blue bars represent the profile
$l_0,l_1,...,l_K$, that satisfies $x_k=l_k - l_{k-1}$. The value of
$l_0$ can be chosen arbitrarily (energy landscapes are defined up to
an additive constant). On the left: a separated configuration with two plateaus: $\{0,1,2\}$
and $\{4,5\}$ (for $(t,a,A)=(1,1/2,3/2)$, see
Defintion~\ref{def:separated}).  On the right:  The function $S(x)$ is
the sum of the lengths in red (see~\eqref{eq:S}). 
}
\caption{Schematic representation of the plateaus of a separated
  vector, and of the function $S$.}\label{fig:sand}
\end{figure}

Starting from  a $(t,a,A)$-separated configuration  $(x,i)$, then
between time $0$ and $t$, the process $(I_t)_{t \ge 0}$
will behave essentially essentially as follows: if $i$ is in a well,
$I_t$ will stay in this well, if $i$ is on a plateau, $I_t$ will go
very quickly in one of the wells on the boundaries of the plateau, and stay there. In both cases, the
process $I_t$ will stay for a long time in a well and ``fill it up'',
until time $t$: this will thus decrease the values of $S(X_t)$. This
is quantified in the following theorem, and it yields a natural Lyapunov
function for the process $(X_t,I_t)$.
\begin{thm}
  \label{thm:Vdecreases}
Let us consider the process $(X_t,I_t)$ with
generator~\eqref{eq:generator_metadynamics_flat}. Let us denote by
$P_t$ the associated transition operator, and $(x,i)$ the initial
condition of the process.
  There exist positive numbers $a<A$, $\chi$, $t_0$ and $\kappa\in(0,1)$, 
  such that  for all $t\geq t_0$, for all starting points $(x,i)\in\dR^K\times\{0,\hdots,K\}$, 
  \[
  (x,i) \text{ is }(t,a,A)\text{-separated}
 \implies 
    P_t W(x,i) \leq \kappa W(x,i)
  \]
where $W(x,i) = \exp(\chi S(x))$.
\end{thm}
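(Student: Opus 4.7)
My plan is to track how the sand $S(X_t)$ evolves. Starting from the identities $l_k(t)-l_k(0)=L_t(k)-L_t(0)$ (with $l_k(t)=\sum_{m=1}^k X_t(m)$) and $\sum_k L_t(k)=t$, a short computation yields
\[
S(X_t) = (K+1)\max_{0\le k\le K}\bigl(l_k(0)+L_t(k)\bigr) - \sum_{k=0}^K l_k(0) - t,
\]
so that $S$ mechanically decreases at rate $1$ unless some site accumulates enough local time to push $l_k(0)+L_t(k)$ above $M:=\max_k l_k(0)$. The whole job is to prevent this on an event of overwhelming probability.

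I would fix $A$ large (say $A=3$), $a<(A-1)/K$, and $\chi>0$ with $\chi K<\beta(A-2)$, and introduce the good event $E$ that no ``uphill big-barrier jump'' (an edge $\{k-1,k\}$ with $|x_k|>At$ traversed in the energetically unfavorable direction) is performed during $[0,t]$. The a priori bound $|X_s(k)-x_k|\le 2s$ keeps every big barrier larger than $(A-2)t$ throughout $[0,t]$, so the rate of any such jump stays below $e^{-\beta(A-2)t}$ and a Poisson/union argument gives $\dP(E^c)\le C_K\, t\, e^{-\beta(A-2)t}$. On $E$, the process is confined (after a possibly brief initial sojourn on the initial plateau if $i$ starts on one) to a single non-plateau flat region $R$, that is, a connected component of $\{0,\dots,K\}$ after deletion of all big-barrier edges.

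Plugging into the identity: for $k\in R$, the choice $a<(A-1)/K$ gives $l_k(0)\le M-(A-Ka)t$, hence $l_k(0)+L_t(k)\le M$ since $L_t(k)\le t$; for $k$ on the initial plateau one has $l_k(0)\le M$ and $L_t(k)\le\tau_{\mathrm{exit}}$, where $\tau_{\mathrm{exit}}$ is the first exit time of $I$ from the initial plateau. Consequently
\[
S(X_t)\,\ind{E}\le S(x)-t+(K+1)\tau_{\mathrm{exit}},
\]
while $S(X_t)\le S(x)+Kt$ holds always. Exponentiating and splitting over $E$ and $E^c$,
\[
P_t W(x,i)\le W(x,i)\,\Bigl(e^{-\chi t}\,\dE_{(x,i)}\bigl[e^{\chi(K+1)\tau_{\mathrm{exit}}}\bigr]+C_K\, t\, e^{(\chi K-\beta(A-2))t}\Bigr),
\]
and both exponents being strictly negative, choosing $t_0$ large enough yields $P_tW(x,i)\le\kappa W(x,i)$ for some $\kappa<1$.

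The genuine obstacle is controlling $\dE_{(x,i)}[e^{\chi(K+1)\tau_{\mathrm{exit}}}]$ uniformly in the initial plateau configuration, since the jump rates $e^{\pm\beta X_s(k)}$ on the plateau depend on $X_s$, which may drift from $x$ by up to $2s$ before $I$ exits. A direct Kramers-type analysis on the coupled PDMP $(X_t,I_t)$ looks awkward; by contrast, the Ray-Knight representation of Theorem~\ref{th:ray-knight} rewrites the local time profile on the plateau as a walk driven by the independent one-dimensional PDMPs $\eta_k^\pm$, whose kernel $q$ of~\eqref{eq:q} has explicit exponential tails. The quantitative bounds on these $\eta_k^\pm$ to be carried out in Section~\ref{sec:auxiliary_process} should then supply the required uniform exponential moment on $\tau_{\mathrm{exit}}$ and close the argument.
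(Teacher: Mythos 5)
Your starting identity for $S(X_t)$ and the observation that $S$ drifts down at rate $1$ except when the running maximum of $l_\cdot(0)+L_t(\cdot)$ is being pushed up are exactly the paper's own starting point (the paper writes the coefficient of $M_t$ as $K$ where $K+1$ seems more natural, but this does not affect anything). After that the routes diverge. The paper bounds $\prb{M_t>bt}$ directly by a union bound over plateau sites $j$ of $\prb{L_t(j)>bt}$, using the deterministic observation that non-plateau sites cannot raise the maximum, and then a Ray--Knight tail estimate of $\prb{L_t(j)>bt}=\prb{T_{j,bt}<t}$; no ``good event'' in trajectory space is needed. You instead introduce the event $E$ that no uphill big-barrier jump occurs and reduce everything to the single random variable $\tau_{\mathrm{exit}}$. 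This is a genuinely different decomposition, but I do not think it is a simplification: you have traded a one-sided tail bound on a single local time $L_t(j)$ for a uniform \emph{exponential moment} of a sum of local times over the whole initial plateau, and the latter is strictly more demanding.

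Two concrete points. First, a small error: on $E$ the process is not confined to a \emph{single} non-plateau region $R$; it can cascade downhill through several successive non-plateau regions separated by big barriers. This does not hurt the bound (all such regions are at height $\le M-(A-(K-1)a)t$), but the statement as written is wrong. Second, and more seriously, the step you yourself identify as ``the genuine obstacle'' is precisely where the paper's argument lives, and your sketch does not indicate how to close it. You need $\esp[(x,i)]{e^{\chi(K+1)\tau_{\mathrm{exit}}}}\le e^{ct}$ with $c<\chi$, uniformly over all $(t,a,A)$-separated $(x,i)$ with $i$ on a plateau. Because the plateau may carry internal barriers of size up to $at$, $\tau_{\mathrm{exit}}$ can be of order $t$ (roughly $K^2 a t$ in the worst case), so this requires choosing $a$ and $\chi$ jointly and carefully (the paper's choice is $\chi=a/(2K)$ together with a much finer constraint on $a$ involving the spectral gap $\alpha$ of the auxiliary process). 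Moreover, to bound an exponential moment rather than a tail, you need sharper control on the $\eta^\pm_k$ walks than the paper's $\prb{L_t(j)>bt}$ computation supplies as stated: one must show that the conditional increments $|\eta^-_k(\Lambda_{j,\cdot}(k-1))|$ have uniform exponential tails for all relevant time arguments, and then sum over the plateau. This is doable with Theorem~\ref{thm:cvg_vt} and Lemma~\ref{lem:so}, but it is the bulk of the proof, not a routine application, and your proposal stops exactly at the point where it starts.
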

The proof of this theorem is  given below in
Section~\ref{sec:Vdecreases}. It draws on the Ray-Knight description of the process found in
\cite{TV11} and already introduced in Section~\ref{sec:RayKnight}.

\subsection{Exponential moments for return times --- Theorem~\ref{thm:expMoments_continuous}}
\label{sec:expMoments}
Let $t_0$, $a$, $A$ be given by Theorem~\ref{thm:Vdecreases}. For $k
\in \{0, \ldots, K\}$,
let $t_k = t_0(A/a)^k$, and let $$\forall k
\in \{0, \ldots, K-1\}, \, \cS_k = \left\{ x \in \dR^K, x \text{ is $(t_k,a,A)$-separated}\right\}.$$ 
Finally let us introduce the compact set $\cK \subset \dR^K$: 
$$\cK=\left\{x \in \dR^K, \, \max \abs{x_j} \leq At_{K-1}\right\}.$$
One has $\dR^K = \PAR{\bigcup_{k=0}^{K-1} \cS_k} \cup \cK$. Indeed, if
$x$ is not in $\cK$, then one of its $K$ coordinates is larger in
absolute value than $At_{K-1}$ and since the $K$ intervals $]at_0,At_0]$, $]at_1,At_1]$,\ldots, $]at_{K-1},At_{K-1}]$ are disjoint, there is an index $k\in\{0,\ldots,K-1\}$ such that $]at_k,At_k]$ contains no coordinate of $x$ so that $x\in\cS_k$.

Define the discrete time Markov chain $(Y_n,J_n)_{n \in \dN}$
with values in $\dR^K \times \{0, \ldots, K\}$ as follows: at step~$n$, 
from $(Y_n,J_n) = (y,j)$, compute
$\underline{k}(y)=\min\{k\in\{0,\hdots,K-1\}:y\in \cS_k\}$ with
convention $\min\emptyset=K$ (applied when
$y\in\cK\setminus\bigcup_{k=0}^{K-1} \cS_k$), run the continuous
time process $(X_t,I_t)_{t \ge 0}$ with
generator~\eqref{eq:generator_metadynamics_flat} starting from
$(X_0,I_0)=(y,j)$ for a time $t_{\underline{k}(y)}$ and then set
$(Y_{n+1}, J_{n+1}) = (X_{t_{\underline{k}(y)}},
I_{t_{\underline{k}(y)}})$. Notice that for any $y \in \dR^K$,
$t_{\underline{k}(y)} \in \{t_0, t_1, \ldots, t_K\}$. Let us denote by $P$ the transition operator for 
$(Y_n,J_n)_{n \in \dN}$. 

\begin{lem}\label{lem:YnJn}
The function $W$ defined in Theorem~\ref{thm:Vdecreases} is a Lyapunov function for
the chain $(Y_n,J_n)$: there exist two constants $C<\infty$ and
$\kappa<1$ such that, for all $ (y,j) \in \dR^K \times \{0, \ldots K\}$
\begin{equation}
  \label{eq:lyapY}
  PW(y,j) \leq \kappa W(y,j) + C\ind{\cK}(y,j).
\end{equation}
Moreover, the  return time to $\cK$ has exponential moments: 
there exists $\tilde{\delta}>0$ and $\tilde{C}<\infty$ such that, for
all  $ (y,j) \in \dR^K \times \{0, \ldots K\}$
  \begin{equation}\label{eq:returntime} \esp[(y,j)]{e^{\tilde{\delta}
        \tilde{\tau}_\cK}} \leq \tilde{C} W(y,j)
\end{equation}  
where $\tilde{\tau}_\cK=\inf \{n \ge 1: \, Y_n \in \cK \}$ is the
first entry time in $\cK \times \{0, \ldots, K\}$ for the chain $(Y_n,J_n)$. 
\end{lem}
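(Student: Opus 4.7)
My plan is to prove the drift inequality~\eqref{eq:lyapY} first, and then deduce the exponential control~\eqref{eq:returntime} by the standard geometric-sum argument.

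For~\eqref{eq:lyapY} I split on whether $y$ lies in $\cK$ or not. If $y\notin\cK$, then $\underline{k}(y)\leq K-1$, so by definition of the chain the starting configuration is $(t_{\underline{k}(y)},a,A)$-separated with $t_{\underline{k}(y)}\geq t_0$; Theorem~\ref{thm:Vdecreases} applies directly and yields $PW(y,j)=P_{t_{\underline{k}(y)}}W(y,j)\leq \kappa W(y,j)$, so the compact correction is not needed. If $y\in\cK$, the chain runs the continuous process for the deterministic time $t_K$; the finite-speed property noted right after~\eqref{eq:generator_metadynamics} gives almost surely $|X_{t_K}(k)-y_k|\leq t_K$, hence each coordinate stays bounded in absolute value by $At_{K-1}+t_K$. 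Combined with~\eqref{eqborns} this yields the uniform bound $S(X_{t_K})\leq K^{2}(At_{K-1}+t_K)$, and therefore $W(X_{t_K},I_{t_K})\leq \exp\bigl(\chi K^{2}(At_{K-1}+t_K)\bigr)=:C$, settling~\eqref{eq:lyapY} in both cases.

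For~\eqref{eq:returntime}, observe that for $n\geq 1$ the event $\{\tilde{\tau}_\cK>n\}$ is $\cF_n$-measurable and forces $Y_n\notin\cK$; conditioning on $\cF_n$ and invoking~\eqref{eq:lyapY} therefore kills the compact correction and gives
\[
\esp[(y,j)]{W(Y_{n+1},J_{n+1})\ind{\tilde{\tau}_\cK>n+1}}\leq \esp[(y,j)]{PW(Y_n,J_n)\ind{\tilde{\tau}_\cK>n}}\leq \kappa\,\esp[(y,j)]{W(Y_n,J_n)\ind{\tilde{\tau}_\cK>n}}.
\]
The base step $n=1$ is handled directly: $\esp[(y,j)]{W(Y_1,J_1)\ind{\tilde{\tau}_\cK>1}}\leq PW(y,j)\leq (\kappa+C)W(y,j)$, using $W\geq 1$ to absorb $C$ into $CW(y,j)$. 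By induction $\esp[(y,j)]{W(Y_n,J_n)\ind{\tilde{\tau}_\cK>n}}\leq \kappa^{n-1}(\kappa+C)W(y,j)$, and since $W\geq 1$ this implies $\prb[(y,j)]{\tilde{\tau}_\cK>n}\leq \kappa^{n-1}(\kappa+C)W(y,j)$ for every $n\geq 1$. Picking $\tilde\delta>0$ such that $e^{\tilde\delta}\kappa<1$ and bounding $\esp[(y,j)]{e^{\tilde\delta\tilde{\tau}_\cK}}\leq\sum_{n\geq 1}e^{\tilde\delta n}\prb[(y,j)]{\tilde{\tau}_\cK\geq n}$ with this geometric tail yields~\eqref{eq:returntime} with a suitable constant $\tilde C$.

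The only delicate step is the uniform bound on $PW$ over $\cK$, which relies essentially on the deterministic finite-speed property; everything else is the standard drift-to-a-small-set argument in the spirit of Meyn--Tweedie, with the minor observation that the indicator of $\cK$ in~\eqref{eq:lyapY} never contributes during the induction thanks to the strict inequality $n\geq 1$ built into the definition of $\tilde{\tau}_\cK$.
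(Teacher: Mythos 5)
Your proof is correct, and it takes a genuinely different route for the exponential-moment half of the statement.

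\paragraph{Drift inequality.} Your case split ($y\in\cK$ vs.\ $y\notin\cK$) differs cosmetically from the paper's ($y\in\bigcup_k\cS_k$ vs.\ $y\in\cK\setminus\bigcup_k\cS_k$), but the two are equivalent because $\dR^K=\bigl(\bigcup_{k=0}^{K-1}\cS_k\bigr)\cup\cK$. One small imprecision: when $y\in\cK$, the chain does \emph{not} necessarily run for time $t_K$ --- it runs for $t_{\underline{k}(y)}$, which equals $t_K$ only if $y\notin\bigcup_k\cS_k$, and can be as small as $t_0$ otherwise. Your bound survives anyway because $t_{\underline{k}(y)}\le t_K$ and the finite-speed estimate $|X_t(k)|\le|y_k|+t$ is monotone in $t$, but the statement as written is slightly off and should say ``for at most time $t_K$''.

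\paragraph{Exponential moments.} Here you genuinely diverge from the paper. The paper cites \cite[Theorem 15.3.3]{MT09}, whose hypotheses include the set $\cK$ being \emph{petite}; it then spends a paragraph establishing petiteness via open-set irreducibility and the $T$-chain property (through the strong bracket-generating condition on the driving vector fields). Your argument is a direct, self-contained induction on $\esp[(y,j)]{W(Y_n,J_n)\ind{\tilde\tau_\cK>n}}$ followed by a geometric tail bound, which needs only the drift inequality~\eqref{eq:lyapY} and the bound $W\ge 1$; no petiteness, no external theorem. The induction is correct (the event $\{\tilde\tau_\cK>n\}$ is $\cF_n$-measurable and forces $Y_n\notin\cK$, so the compact correction never fires for $n\ge 1$), and the resulting bound $\prb[(y,j)]{\tilde\tau_\cK>n}\le\kappa^{n-1}(\kappa+C)W(y,j)$ gives~\eqref{eq:returntime} once $e^{\tilde\delta}\kappa<1$.

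\paragraph{Trade-off.} Your route is shorter and more elementary for this particular lemma. What it does \emph{not} deliver is the petiteness of $\cK$, which the paper's proof establishes here as a side effect and which is re-used when invoking the $V$-uniform ergodicity results of \cite{DMT95} in the proof of Theorem~\ref{thmexperg}. So if one adopted your argument verbatim, the petiteness discussion would have to be relocated to that later proof; it cannot be dispensed with entirely.
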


\begin{proof}
  The fact that \eqref{eq:lyapY} holds when $y\in\bigcup_{k=0}^{K-1} \cS_k$ is a straightforward consequence of
  Theorem~\ref{thm:Vdecreases} and the definition of the chain
  $(Y_n,J_n)$. Otherwise, when $y\in\cK\setminus\bigcup_{k=0}^{K-1} \cS_k$, for
  all $k\in\{1,\hdots,K\}$, $|Y_1(k)|\le |y_k|+ t_K\le At_{K-1}+ t_K$,
  so that, by \eqref{eqborns}, $W(Y_1,J_1)\le e^{K^2(At_{K-1}+ t_K)}$
  and \eqref{eq:lyapY} still holds, by choosing appropriately $C$.

  To bound the return times,  we apply \cite[Theorem 15.3.3]{MT09}
  which states that  the drift condition~\eqref{eq:lyapY}
 and the fact that $\cK$ is petite entails the desired bound~\eqref{eq:returntime} on exponential moments
 of $\tilde{\tau}_{\cK}$. Let us recall that the compact set $\cK$ is "petite"
  in the terminology of \cite{MT09} if there exists a distribution
  $a$ on $\dN$ and a non trivial measure $\nu_a$ such that, 
  if $N$ is a random variable with distribution $a$ independent of the process,
  \[
    \forall (y,j)\in\cK, \forall B \subset \dR^K \times \{0, \ldots, K\},
    \quad
    \prb[(y,j)]{(Y_N,J_N) \in B} \geq \nu_a(B).
  \]
In order to prove that $\cK$ is "petite", we will use \cite[Theorem
6.2.5]{MT09} which states that every 
  compact set is petite for a Markov chain which is open set
irreducible and which is a so called $T$-chain. Let us now prove these
two properties for $(Y_n,J_n)$.

  For any starting point $(y,j)$ and any open set $\cO\subset\dR^K\times \{0,\ldots,K\}$, 
  $\prb{\exists n, (Y_n,J_n)\in \cO}>0$, so $(Y_n,J_n)$ is open set irreducible
  (\cite[Section 6.1.2]{MT09}).
  
  The family of (constant) vector fields $(F_k)_{0\leq k \leq K}$ defined
  in Remark \ref{rem:pdmp} in Section~\ref{sec:the_model} is such that the $(F_k - F_0)_{k>0}$ are 
  linearly independent. \emph{A fortiori} the family $(F_k)_{0\leq k \leq K}$ 
 is "strong bracket generating" in the terminology of \cite{BLMZ15}, 
   so for any positive $t$ and
  starting from any point, the distribution of $(X_t,I_t)$ has a continuous component
  (that is, a component absolutely continuous with respect to the
  Lebesgue measure times the uniform measure over $\{0, \ldots, K\}$)
  by \cite[Theorem 4.2]{BLMZ15} (see also \cite[Theorem 2]{BH12}). This
  implies that $(Y_n,J_n)$ is a so-called $T$-chain (\cite[Section 6]{MT09}). 

This concludes the proof of Lemma~\ref{lem:YnJn}.
\end{proof}

To finish the proof of Theorem~\ref{thm:expMoments_continuous}, we use
a simple comparison argument. Choose an $\eta \in (0,t_0)$. 
Since $Y_n=X_{t_{\underline{k}(Y_0)} + t_{\underline{k}(Y_1)} +
  \ldots+ t_{\underline{k}(Y_{n-1})}}$ with
$t_{\underline{k}(Y_0)}\ge t_0> \eta$, one has $\tau_\cK(\eta) \le t_{\underline{k}(Y_0)} + t_{\underline{k}(Y_1)} +
  \ldots+ t_{\underline{k}(Y_{\tilde{\tau}_\cK-1})}$. Moreover, since $t_{\underline{k}(Y_0)} + t_{\underline{k}(Y_1)} +
  \ldots+ t_{\underline{k}(Y_{n-1})}\le n t_K$, one obtains $\tilde{\tau}_\cK\ge \tau_\cK(\eta)/t_K$.
Therefore, setting $\delta = \tilde{\delta}/t_K$, we get
\begin{align}
\esp[(x,i)]{\exp\PAR{ \delta \tau_\cK(\eta)}} \le
  \esp[(x,i)]{\exp\PAR{ \tilde{\delta}\tilde{\tau}_\cK}} &\leq
                                                           \tilde{C}W(x,i)
     = \tilde{C} \exp(\chi S(x)). \label{eq:tauEtaMajore}
\end{align}

\subsection{Proof of the main results}
\label{sec:prf_clt}

\begin{proof}[Proof of Theorem \ref{thmexperg}]
 We apply~\cite[Theorems 6.2]{DMT95}, choosing $f=1$. In our notation, this result
 tells us that 
 \[ W_0(x,i) = 1 + \frac{1}{\delta} \PAR{ \esp[(x,i)]{\exp(\delta \tau_\cK(\eta)) } - 1}\]
 is a Lyapunov function for the chain sampled at integer times. Using
 \cite[Theorem 5.2]{DMT95}, this implies that $(X_t,I_t)$ is $W_0$-uniformly
 ergodic, which implies the total variation bound~\eqref{eq:experg}, 
 since $W_0(x,i) \leq C_\delta W(x,i)$, for some positive constant $C_\delta$ by Equation~\eqref{eq:tauEtaMajore}. 
\end{proof}

\begin{proof}[Proof of Theorem~\ref{th:ergo}]
  We prove here that the ``exponential ergodicity'' given by the
  convergence in total variation indeed implies that ergodic means converge
  for reasonable test functions. This result seems to be folklore and similar
  results can be found in the literature, but we include a proof for completeness. 

  \textbf{Step 1. } Let us check that for any initial probability measure $\nu$, $\nu P_t$ converges to $\mu$ in total variation. Indeed, for any
  measurable test function $\varphi$  bounded by $\frac 1 2$,
  \begin{align*}
\left|\dE^\nu(\varphi(X_t,I_t)) - \int \varphi d\mu\right| &\leq \int
                                                            \left| \dE^{(x,i)}
                                                            (\varphi(X_t,I_t))
                                                            - \int
                                                            \varphi
                                                            d\mu\right|
                                                            d\nu(x,i)\\
&
\le \int
                                                            \left\|
                                                                       \cL \left( (X_t,I_t) | (X_0,I_0) = (x,i)\right) - \mu \right\|_{TV}
                                                            d\nu(x,i).
  \end{align*}
Thus, taking the supremum over $\varphi$,
  \begin{align*}
\left\|\nu P_t - \mu\right\|_{TV}&
\le \int
                                                            \left\|
                                                                       \cL \left( (X_t,I_t) | (X_0,I_0) = (x,i)\right) - \mu \right\|_{TV}
                                                            d\nu(x,i).
  \end{align*}
For fixed $(x,i)$, $  \left\|
                                                                       \cL
                                                                       \left(
                                                                         (X_t,I_t)
                                                                         |
                                                                         (X_0,I_0)
                                                                         =
                                                                         (x,i)\right)
                                                                       -
                                                                       \mu
                                                                     \right\|_{TV}$
  converges to    0   by Theorem~\ref{thmexperg}, and Lebesgue's
 Theorem thus yields the conclusion.

Notice that, as a consequence, the probability measure $\mu$ 
is the unique invariant measure  and thus the process $(X_t,I_t)$ is ergodic with respect to $\mu$.

    
  \textbf{Step 2. } Let $f\in L^1(\mu)$ and suppose that we start from
  the invariant measure $\mu$.
  The result then follows from classical arguments of ergodic theory, see for example
  \cite[Theorems 9.6, 9.8]{Kal02}. 

  \textbf{Step 3. } Let $\nu$ be any initial measure.
  By a result from \cite{Tho00} (namely, the fact that
  item (f) implies item (a) in  Theorem 6.4.1,  p.~205, and
  the discussion p.~213), there exists
  a coupling $((X_t,I_t),(Y_t,J_t))$ such that
  \begin{itemize}
  \item $(X_t,I_t)$ has the law of the process starting from $\nu$,
  \item $(Y_t,J_t)$ has the law of the process starting from $\mu$,
  \item The stopping time $T= \inf\{t: (X_t,I_t)= (Y_t,J_t)\}$ is
    almost surely finite, and $\prb{X_t = Y_t , \, \forall t\geq T} = 1$.
  \end{itemize}
    
  \textbf{Step 4. } Using the coupling introduced in Step $3$, we write
  \begin{align*}
    &\abs{ \frac{1}{t} \int_0^t f(X_s,I_s) ds - \int f(x,i) d\mu(x,i) } \\
    &\quad \leq
    \abs{ \frac{1}{t} \int_0^t f(X_s,I_s) ds - \frac{1}{t} \int_0^t f(Y_s,J_s) ds }
    +
    \abs{ \frac{1}{t} \int_0^t f(Y_s,J_s) ds - \int f(x,i) d\mu(x,i) }.
  \end{align*}
  The second term on the right hand side converges almost surely to $0$ by Step~$1$.
  The first term is bounded above  by
  \[ \frac{1}{t} \int_0^{T\wedge t} \abs{f(X_s,I_s) - f(Y_s,J_s)} ds.\]
  When $t$ goes to infinity, the integral converges to $\int_0^T \abs{f(X_s,I_s) - f(Y_s,J_s)} ds$
  which is finite almost surely since \eqref{eq:finite_along_trajectories} holds,
  so the whole first term converges to zero. This concludes
  the proof.
  \end{proof}

\begin{proof}[Proof of the central limit Theorem~\ref{thm:clt}]
  We first show that it is enough to prove the result for the process starting
  from its invariant measure.

Indeed, suppose that the initial measure $\nu$ and the test function $f$
are such that \eqref{eq:finite_along_trajectories} holds, that is,
$\prb[\nu]{\forall t\ge
    0,\,\int_0^t|f(X_s,I_s)|ds<\infty}=1$. Let $g:\dR\to\dR$ be a bounded
  Lipschitz continuous test function, with Lipschitz constant~$\Lip(g)$. Let
  $G\sim{\cal N}(0,1)$ be a standard Gaussian random variable. For any
  $t>t_0>0$ and $\alpha>0$, one has
\begin{align*}
  & \left|
    \esp[\nu]{g\left(\frac{1}{\sqrt{t}}\int_0^tf(X_s,I_s)ds\right)}
    - \esp{g\left(\sqrt{c_f}G\right)}
    \right|\\
  &\le \left|
    \esp[\nu]{g\left(\frac{1}{\sqrt{t}} \int_0^tf(X_s,I_s)ds\right)
    - g\left(\frac{1}{\sqrt{t}} \int_{t_0}^tf(X_s,I_s)ds\right)}\right|
    + \left|\dE_{\nu}-\dE_{\mu}\right|\left[
    g\left(\frac{1}{\sqrt{t}}\int_{t_0}^tf(X_s,I_s)ds\right)
    \right]\\
  &\quad  +\left|
    \esp[\mu]{g\left(\frac{1}{\sqrt{t}}\int_{t_0}^t f(X_s,I_s)ds\right)}
    - \esp{g\left(\sqrt{c_f}G\right)}
    \right| \\
  &\le \Lip(g)\alpha + 2\|g\|_\infty \prb[\nu]{\frac{1}{\sqrt{t}} \int_0^{t_0}|f(X_s,I_s)|ds \ge \alpha}
     + 2\|g\|_\infty\| \cL(X_{t_0},I_{t_0} | X_0,I_0 \sim \nu ) - \mu \|_{TV} \\
     &\quad  + \left|\esp[\mu]{g\left(\frac{1}{\sqrt{t}}\int_{t_0}^tf(X_s,I_s)ds\right)}-\esp{g\left(\sqrt{c_f}G\right)}\right|.
\end{align*}
Assume that the CLT holds when starting from the equilibrium measure $\mu$.
By Slutsky's Theorem, this implies that the last term converges to zero as $t$
goes to infinity. Therefore
\begin{align*} \limsup_t
  &\left|
    \esp[\nu]{g\left(\frac{1}{\sqrt{t}}\int_0^tf(X_s,I_s)ds\right)}
    - \esp{g\left(\sqrt{c_f}G\right)}
  \right| \\
 &\quad  \leq
   \Lip(g)\alpha + 2\|g\|_\infty \limsup_t \left(
   \prb[\nu]{\frac{1}{\sqrt{t}} \int_0^{t_0}|f(X_s,I_s)|ds \ge \alpha}
   \right)
     + 2\|g\|_\infty\| \cL(X_{t_0},I_{t_0} | X_0,I_0 \sim \nu ) - \mu \|_{TV}. 
\end{align*}
Since \eqref{eq:finite_along_trajectories} holds,
the $\limsup$ on the right hand side vanishes. We can then  let $t_0\to\infty$, using
the result from the second step of the proof of Theorem~\ref{th:ergo}, 
and finally $\alpha\to 0$ to conclude. To sum up, it is enough now to
show the CLT when we start from the equilibrium measure, that is when $\nu = \mu$.

We are going to deduce the central limit theorem under $\dP_\mu$ from~\cite[Theorem
18.5.3]{IL71} which is dedicated to strongly mixing stationary discrete
time sequences. Section 18.7 in this book is dedicated to continuous time
stationary processes but only states central limit theorems under the stronger
uniformly mixing condition even if its introduction explains that the extension
of all the discrete time results gives rise to no serious difficulty and is
left to the reader. 
Let for $n\in{\mathbb N}$, $Y_n=\int_n^{n+1}f(X_s,I_s)ds$. One has
\begin{equation}
   \esp[\mu]{|Y_n|^{2+\epsilon}}
   \le \esp[\mu]{\left(\int_n^{n+1}|f(X_s,I_s)|ds\right)^{2+\epsilon}}
   \le \int|f|^{2+\epsilon}d\mu<\infty\label{inty}
\end{equation}
by the invariance of $\mu$ and Jensen's inequality. 

For $l,m,n\ge 1$, let $A$ (resp. $B$) be a Borel subset of $\dR^{l+1}$ (resp. $\dR^{n+1}$). 
By the Markov property,
\begin{align*}
   \esp[\mu]{\ind{A}(Y_0,\hdots,Y_l)\ind{B}(Y_{l+m},\hdots,Y_{l+m+n})}
   & = \esp[\mu]{\ind{A}(Y_0,\hdots,Y_l)\esp[\mu]{\ind{B}(Y_{l+m},\hdots,Y_{l+m+n})|(X_s,I_s)_{s\in[0,l+1]}}}\\
   & = \esp[\mu]{\ind{A}(Y_0,\hdots,Y_l)\prb[(x,i)]{(Y_{m-1},\hdots,Y_{m+n-1})\in B}\bigg|_{(x,i)=(X_{l+1},I_{l+1})}}.
\end{align*}
Since by Theorem \ref{thmexperg} and invariance of $\mu$,
\[
  \left|\prb[(x,i)]{(Y_{m-1},\hdots,Y_{m+n-1})\in
    B}-\prb[\mu]{(Y_{l+m},\hdots,Y_{l+m+n})\in B}\right|\le C'_\delta W(x,i)
e^{-\delta (m-1)},
\]
we deduce that
\[
  \begin{split}
   \left|
     \esp[\mu]{\ind{A}(Y_0,\hdots,Y_l)\ind{B}(Y_{l+m},\hdots,Y_{l+m+n})}
     - \prb[\mu]{(Y_0,\hdots,Y_l)\in A}\prb[\mu]{(Y_{l+m},\hdots,Y_{l+m+n})\in B}
   \right| 
   \\
   \le C'_\delta e^{-\delta (m-1)}\int W d\mu,
 \end{split}
 \]
where $\int W d\mu<\infty$ by \eqref{eqborns}.
Therefore, under $\dP_\mu$, the stationary sequence $(Y_n)_{n\ge 0}$ is
strongly mixing in the sense of Definition 17.2.1 \cite{IL71} with mixing
coefficient $\alpha(m)=C'_\delta e^{-\delta (m-1)}\int W d\mu$. Since
$\sum_{n=1}^\infty\alpha(n)^{\epsilon/(2+\epsilon)}<\infty$, Theorem
18.5.3 \cite{IL71} ensures that $\sum_{n=1}^\infty|\esp[\mu]{Y_0Y_n}|<\infty$
and that, under $\dP_\mu$, 
\[
  \frac{1}{\sqrt{n}}\sum_{j=1}^n Y_j
  \text{ converges in distribution to }
  \cN\PAR{0,\esp[\mu]{Y_0^2}+2\sum_{n=1}^\infty\esp[\mu]{Y_0Y_n}}.
\] 
The remainder term $\int_{\lfloor t\rfloor}^t f(X_s,I_s)ds$ is easily 
dealt with.  Indeed, 
\(\left|\int_{\lfloor t\rfloor}^tf(X_s,I_s)ds\right|\le\int_{\lfloor t\rfloor}^{\lfloor
  t\rfloor+1}|f(X_s,I_s)|ds.\)
Under $\dP_\mu$, the right-hand is finite
according to \eqref{inty} and its law does not depend on $t$ which implies that
$\frac{1}{\sqrt{t}}\int_{\lfloor t\rfloor}^tf(X_s,I_s)ds$ converges in
probability to $0$ as $t\to\infty$. By Slutsky's theorem, we conclude that,
under $\dP_\mu$, 
\[
  \frac{1}{\sqrt{t}}\int_0^tf(X_s,I_s)ds \text{  converges in law to }
\cN\PAR{0,\esp[\mu]{Y_0^2}+2\sum_{n=1}^\infty\esp[\mu]{Y_0Y_n}}, \]
 as
$t\to\infty$.

To conclude the proof we now check that the limiting variance is
equal to $2\int_{0}^{\infty}\esp[\mu]{f(X_0,I_0)f(X_{t},I_{t})}dt$ where the
integral makes sense. First, combining a computation similar to the one leading
to the above strong mixing property with Theorem 17.2.2 \cite{IL71}, we obtain
that the conditions $\int |f|^{2+\epsilon}d\mu<\infty$ and $\int f d\mu=0$
ensure the existence of a finite constant $C$ such that for all $t\ge 0$,
$|\esp[\mu]{f(X_0,I_0)f(X_{t},I_{t})}|\le
Ce^{-\delta\epsilon t /(2+\epsilon)}$ so that
$\int_0^\infty|\esp[\mu]{f(X_0,I_0)f(X_{t},I_{t})}|dt<\infty$.
Moreover, by symmetry, invariance of $\mu$ and the change of variables $t=s-r$,
\begin{align*}
   \esp[\mu]{Y_0^2}
   =& 2\int_0^1\int_r^1 \esp[\mu]{f(X_r,I_r)f(X_s,I_s)}dsdr
    = 2\int_0^1\int_r^1\esp[\mu]{f(X_0,I_0)f(X_{s-r},I_{s-r})}dsdr\\
   =& 2\int_0^1\int_0^{1-r}\esp[\mu]{f(X_0,I_0)f(X_{t},I_{t})}dtdr
\end{align*} and for $j\ge 1$,
\begin{align*}
   \esp[\mu]{Y_0Y_j}
   =& \int_0^1\int_j^{j+1}\esp[\mu]{f(X_r,I_r)f(X_s,I_s)}dsdr
    = \int_0^1\int_j^{j+1}\esp[\mu]{f(X_0,I_0)f(X_{s-r},I_{s-r})}dsdr\\
   =&\int_0^1\int_{j-r}^{j+1-r}\esp[\mu]{f(X_0,I_0)f(X_{t},I_{t})}dtdr.
\end{align*}
As a consequence,
\begin{align*}
   \esp[\mu]{Y_0^2}+2\sum_{j=1}^\infty\esp[\mu]{Y_0Y_j}
   =& 2\int_0^1\int_{0}^{\infty}\esp[\mu]{f(X_0,I_0)f(X_{t},I_{t})}dtdr
    = \int_{0}^{\infty}\esp[\mu]{f(X_0,I_0)f(X_{t},I_{t})}dt. \qedhere
\end{align*}
\end{proof}

%
%
%

\subsection{Longtime convergence analysis on the auxiliary process \texorpdfstring{$\eta$}{eta} with generator \texorpdfstring{$H$}{H}}
\label{sec:auxiliary_process}
In order to use the Ray-Knight representation from Section~\ref{sec:RayKnight}
we need to get information on the Markov process $\eta$ with generator $H$
defined by~\eqref{eq:H}, since the steps in the 
$k \mapsto \Lambda_{j,r}(k)$ walk are defined in terms of $\eta$ (see
Equation~\eqref{eq:ray-knight-lambda} and Theorem~\ref{th:ray-knight}). 

The article \cite{TV11} already contains interesting bounds on
the convergence in total variation for $\eta$  to its invariant 
measure $\nu$ with density $Z^{-1} e^{-\frac{2}{\beta}\cosh(\beta x)}$ (\cite[Lemma 2.4]{TV11}),
but only for two specific initial distributions. Rather than 
adapting their proof, we provide a new one using a Lyapunov function. 
To get an idea for it, we make a guess at the behaviour of exponential 
moments of return times to the center of the space. If the starting
point $x$ of $\eta$ goes to $-\infty$, 
the process immediately jumps up to a bounded interval,
so we look
for a Lyapunov function $W^H$ which is bounded on $(-\infty,0]$. If
$x$ goes to $+\infty$, the process goes down at linear speed 
so we look for an exponential-like function $W^H$ at $+\infty$. 
\begin{lem}
  \label{lem:Vlyap}
  For any positive constant $s$,  let  $W^H_s$ be
 defined by by: 
  \[ W^H_s(x) = \begin{cases}
      2 & \text{ if } x\leq -s - 1 \\
      1 & \text{ if } -s \leq x \leq s \\
      e^{2 (x-s)} & \text { if } x\geq s
    \end{cases}
  \]
  in such a way that $W^H_s$ is smooth and with values in $[1,2]$ on the interval $[-s-1,-s]$.   Then for some $s$ large enough, $W^H_s$
  satisfies: for all $x \in \dR$,
  \begin{equation}
    \label{eq:Vlyap}
    H W^H_s(x) \leq - W^H_s(x) + C \ind{\abs{x} \leq c}
  \end{equation}
  for some positive constants $c$ and $C$, where $H$ is defined by~\eqref{eq:H}.
\end{lem}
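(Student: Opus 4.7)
The plan is to establish the Foster--Lyapunov inequality $HW^H_s \leq -W^H_s + C\ind{|x|\leq c}$ region by region, with $c = s+1$ so that the compact indicator absorbs the bounded interval $[-s-1,s]$. Throughout we use the change of variable $t = \int_x^z e^{\beta u}\, du$, under which $q(x,z)\, dz = e^{-t}\, dt$ and $e^{\beta z} = e^{\beta x} + \beta t$; this reduces every jump integral to an integral over $[0,\infty)$ against $e^{-t}\, dt$.

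\textbf{Region $x \geq s$.} Here $W^H_s(x) = e^{2(x-s)}$ and $(W^H_s)'(x) = 2 W^H_s(x)$, so the drift contributes exactly $-2W^H_s(x)$. A short computation using the substitution gives
\[
e^{-\beta x}\int_x^\infty q(x,z)\, W^H_s(z)\, dz \;=\; e^{-\beta x}\, W^H_s(x) \int_0^\infty \bigl(1 + \beta t\, e^{-\beta x}\bigr)^{2/\beta}\, e^{-t}\, dt.
\]
The elementary bound $(1+u)^{2/\beta} \leq e^{(2/\beta)u}$ shows that the last integral is at most $(1 - 2e^{-\beta x})^{-1}$ whenever $e^{-\beta x} < 1/2$, which holds as soon as $s \geq (\log 2)/\beta$. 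Adding the drift and the ``mass one'' term $-e^{-\beta x} W^H_s(x)$, one obtains
\[
HW^H_s(x) \;\leq\; \PAR{-2 + \frac{2e^{-2\beta x}}{1 - 2e^{-\beta x}}}\, W^H_s(x),
\]
which is at most $-W^H_s(x)$ once $s$ is chosen large enough that the bracket is bounded above by $-1$.

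\textbf{Region $x \leq -s-1$.} Here $(W^H_s)'(x) = 0$, so $HW^H_s(x) = e^{-\beta x}\int_x^\infty q(x,z)\,(W^H_s(z) - 2)\, dz$. The substitution shows that, under $q(x,\cdot)$, the probability of jumping above $s$ is $e^{-t^*}$ with $t^* = \beta^{-1}(e^{\beta s} - e^{\beta x}) \to \infty$ as $s \to \infty$ uniformly in $x \leq -s-1$, and a Gamma-type tail estimate controls $\int_{\{z>s\}} q(x,z)\, e^{2(z-s)}\, dz$ by something of order $2^{2/\beta} e^{-t^*}$, doubly exponentially small in $s$. Similarly the mass on $[x,-s]$, where $W^H_s \in [1,2]$, is $O(e^{-\beta s})$, while the bulk mass on $[-s,s]$, where $W^H_s = 1$, is $1 - o(1)$. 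Collecting these estimates yields $\int q(x,z)\,(W^H_s(z) - 2)\, dz \leq -1/2$ for $s$ large, and since $e^{-\beta x} \geq e^{\beta(s+1)}$ on this region we get $HW^H_s(x) \leq -\tfrac12 e^{\beta(s+1)} \leq -2 = -W^H_s(x)$ for $s$ large.

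\textbf{Bounded region $-s-1 \leq x \leq s$.} By construction $(W^H_s)'$ is bounded on the smooth interpolation piece, the jump rate $e^{-\beta x}$ is bounded by $e^{\beta(s+1)}$, and the same substitution shows that $\int q(x,z)\,|W^H_s(z) - W^H_s(x)|\, dz$ is uniformly bounded on the compact $x \in [-s-1,s]$. Hence $|HW^H_s|$ is bounded on this interval by some constant $M$, and the inequality holds with $c = s+1$ and $C = M + 2$.

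The main obstacle is the tight accounting in the upper region: the exponential rate $2$ in $W^H_s(x) = e^{2(x-s)}$ is chosen precisely so that $-(W^H_s)'(x) = -2 W^H_s(x)$ is just large enough to beat the jump contribution; one must check that the subleading correction to the jump term is controlled by the small factor $e^{-\beta x}$ and that the same threshold $s$ can be used simultaneously in both extreme regions.
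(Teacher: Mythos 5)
Your proof is correct and reaches the same conclusion by a genuinely different route. The paper represents $q(x,\cdot)\,dz$ as the law of a (negated) Gumbel random variable $G$ conditioned on $\{G\geq x\}$ and rewrites $H$ accordingly; in the upper region it then splits $\esp{(e^{2G}-e^{2x})\ind{G\geq x}}$ into a near-range piece (controlled by $e^{2a}-1$) and a far-tail piece bounded by Cauchy--Schwarz, while in the lower region it picks $s$ so that $\esp{W^H_s(G)}\leq 4/3$ and uses $\prb{G\geq x}\geq 3/4$. You instead perform the explicit change of variables $t=\int_x^z e^{\beta u}\,du$, which turns the normalized jump kernel into the exponential density $e^{-t}\,dt$ and, via $(1+u)^{2/\beta}\leq e^{2u/\beta}$, collapses the upper-region jump integral into the closed-form geometric bound $(1-2e^{-\beta x})^{-1}$, so the drift $-2W^H_s$ visibly dominates once $e^{-\beta x}$ is small. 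Your estimate in the lower region is the same in spirit as the paper's (the mass concentrates on $[-s,s]$ where $W^H_s-2=-1$, with doubly-exponentially small tails past $s$) but is organized as a piecewise accounting rather than through a single moment condition on $G$. The net effect is that your argument trades the paper's extra tuning parameter $a$ and Cauchy--Schwarz step for a direct substitution and an elementary convexity inequality, making the crucial upper-region computation more transparent, at the modest cost of a slightly more hands-on tail estimate near $-\infty$.

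One small remark shared with the paper's own proof: $W^H_s$ as written has a kink at $x=s$ (left derivative $0$, right derivative $2$), so strictly speaking one should either smooth $W^H_s$ there as well, or read $HW^H_s$ with the right derivative; since $x=s$ lies inside the compact window $\{|x|\leq c\}$ this does not affect the conclusion.
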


\begin{thm}
  \label{thm:cvg_vt}
Let $(Q_t)_{t \ge 0}$ denote the transition semi-group associated with
the generator $H$  defined by~\eqref{eq:H}.
  For $W^H_s$ satisfying~\eqref{eq:Vlyap}, there exists $\alpha>0$ and
  $C<\infty$ such that for all $x \in \dR$,
  for all $t \ge 0$, 
   \[
     \NRM{\delta_xQ_t - \nu}_{TV} \leq C W^H_s(x)e^{-\alpha t},
   \]
where $\nu$ is the probability measure on $\dR$ with density $Z^{-1}
e^{-\frac{2}{\beta}\cosh(\beta x)}$ with respect to the Lebesgue measure.
\end{thm}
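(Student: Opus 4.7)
The plan is to combine the drift condition from Lemma~\ref{lem:Vlyap} with a petiteness property and the standard Down--Meyn--Tweedie theory of $V$-uniform ergodicity for continuous-time Markov processes, in exactly the same spirit as the arguments used in Sections~\ref{sec:expMoments}--\ref{sec:prf_clt} for $(X_t,I_t)$.

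First, I would translate the infinitesimal drift inequality $HW^H_s\leq -W^H_s + C\ind{|x|\le c}$ into an integrated semigroup inequality. Applying Dynkin's formula together with Gronwall's lemma (the standard procedure, taking care that $W^H_s$ is bounded below, bounded on compacts, and has linear exponential growth so that all quantities are finite along the PDMP trajectories), one obtains
\[
Q_tW^H_s(x)\leq e^{-t}W^H_s(x) + C\PAR{1-e^{-t}},
\]
which is the usual Foster--Lyapunov bound for the skeleton chains $(\eta_{n\Delta})_{n\ge 0}$ at any time step $\Delta>0$.

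Next, I would check that the compact set $\cK^H\defEgal[-c,c]$ is \emph{petite} for some skeleton $(\eta_{n\Delta})$ and that the process is irreducible. Irreducibility is immediate from the structure of the generator~\eqref{eq:H}: starting from any $x\in\dR$, the deterministic drift brings $\eta$ into any interval $(-\infty,y]$ in finite time, while the jump kernel $q(y,\cdot)$ has a strictly positive density on $[y,\infty)$, so every open set has positive probability to be visited. Petiteness follows by the same type of argument as in the proof of Lemma~\ref{lem:YnJn}: the one-dimensional PDMP has a nontrivial deterministic flow and a jump mechanism with absolutely continuous kernel, which makes the time-$t$ law admit a continuous component (this is again \cite[Thm.~4.2]{BLMZ15} or \cite[Thm.~2]{BH12} applied to the trivial one-field family $(F=-1)$ with jumps). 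Combined with irreducibility, \cite[Thm.~6.2.5]{MT09} ensures that every compact set, and in particular $\cK^H$, is petite for the skeleton.

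Finally, with the Foster--Lyapunov inequality in $W^H_s$, petiteness of $\cK^H$, irreducibility and aperiodicity in hand, I would invoke~\cite[Theorem 5.2(c)]{DMT95} (or equivalently~\cite[Theorem 6.1]{DMT95}) which yields $W^H_s$-uniform ergodicity: there exist $\alpha>0$ and $C<\infty$ such that
\[
\sup_{|g|\le W^H_s}\ABS{Q_tg(x)-\int g\,d\nu}\leq CW^H_s(x)e^{-\alpha t}.
\]
Specializing to $g$ bounded by $1\le W^H_s$ gives the announced total variation bound, and identifies $\nu$ as the unique invariant measure (its density with respect to Lebesgue measure being characterized by $H^*\nu=0$, which a direct computation confirms is $Z^{-1}e^{-\frac{2}{\beta}\cosh(\beta x)}$).

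The only genuinely delicate step is the petiteness verification; once one observes that the jump kernel $q(y,\cdot)$ has a density and that the deterministic flow mixes it across all of $\dR$ in finite time, the result is standard. The remainder is routine bookkeeping on the Lyapunov function.
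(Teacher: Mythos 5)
Your proposal follows essentially the same route as the paper: the drift inequality of Lemma~\ref{lem:Vlyap}, irreducibility, petiteness of $[-c,c]$, and then~\cite[Theorem 5.2(c)]{DMT95}, which is exactly how the paper concludes. The one thing I would flag is the reference you invoke for petiteness: \cite[Thm.~4.2]{BLMZ15} and \cite[Thm.~2]{BH12} concern PDMPs driven by switching between \emph{finitely many} vector fields, and the ``strong bracket generating'' hypothesis concerns that family of fields; the $\eta$ process has a single deterministic drift together with a continuous-state jump kernel $q(y,\cdot)\,dz$, so it is not of that form and those theorems do not apply as stated. The correct (and simpler) observation, which the paper uses via~\cite{MTStabilityII}, is that on the event of exactly one jump on $[0,1]$ the density of $q$ directly makes the law of $\eta_1$ admit an absolutely continuous component; the $T$-chain property and petiteness of compacts then follow from the general Meyn--Tweedie machinery without invoking results tailored to switching vector fields.
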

\begin{rem}
  Since $W^H_s(x)$ is bounded on $(-\infty,0)$, the process comes back from 
  $-\infty$ in finite time in the following sense:  $\forall \epsilon
  >0$, $\exists t_0>0$,
 $\forall x \le 0$, $\forall t \ge t_0$, $\NRM{\delta_xQ_t - \nu}_{TV}
 \leq \epsilon$.
 \end{rem}

Let us first prove Theorem~\ref{thm:cvg_vt} using Lemma~\ref{lem:Vlyap}. 
\begin{proof}[Proof of Theorem \ref{thm:cvg_vt}]
  This is a standard consequence of the existence of the Lyapunov
  function $W_s$. 
  To be more precise, in the terminology of \cite{MT09,DMT95}, the process is
 easily seen to be irreducible and aperiodic, and the 
  compact set $[-c,c]$ is petite (this can be seen as a consequence of
\cite[Proposition 4.2, Theorem 4.1]{MTStabilityII} and the fact that starting
from any point, the distribution of $\eta_1$ has a part which is absolutely continuous
with respect to Lebesgue measure).

  All the conditions are met to   apply \cite[Theorem 5.2 (c)]{DMT95}, which 
  implies the convergence in total variation claimed in Theorem~\ref{thm:cvg_vt}. 
\end{proof}
\begin{proof}[Proof of Lemma~\ref{lem:Vlyap}]
  Let $G$ be a random variable with density $e^{\beta y} \exp\left(-\frac{1}{\beta}\exp(\beta y)\right)$, that is, 
  $(-G)$ follows a Gumbel distribution. The generator $H$ may be rewritten as
  \begin{align*}
    Hf(x) =& -f'(x) + e^{-\beta x} \esp{ f(G) - f(x) | G\geq x} \\
          =& - f'(x) + \frac{e^{-\beta x}}{\prb{G\geq x}} \esp{ (f(G) - f(x)) \ind{G\geq x}}.
  \end{align*}
  Since $W^H_s(G)$ is integrable, $HW^H_s$ is locally bounded so it is enough to show 
  \(HW^H_s(x) \leq -  W^H_s(x)\)
  when $\abs{x}$ is  large. 
  
  \prfstep{Near $-\infty$}
  In this region 
  \begin{align*}
    HW^H_s(x) =& \frac{e^{-\beta x}}{\prb{G\geq x}} \esp{ (W^H_s(G) - 2)\ind{G\geq x}} \\
          =& e^{-\beta x} \PAR{\frac{\esp{W^H_s(G)\ind{G\geq x}}}{\prb{G\geq x}} - 2} \\
	  \leq& e^{-\beta x} \PAR{\frac{4}{3} \esp{W^H_s(G)} - 2}
	\end{align*}
	as soon as $\prb{G\geq x} \geq 3/4$. 
  Choose $s$ large enough to guarantee $\esp{W^H_s(G)} \leq 4/3$. Then the term between brackets
  is less than $-2/9$, and $HW^H_s(x)\leq -2 = -W^H_s(x)$ if $x$ is
  smaller than $-C$ for $C>0$ large  enough.

  \prfstep{Near $\infty$}
  Here the bound is mainly given by the drift term and we have to control the jump term. 
  \begin{align*}
    HW^H_s(x) = - 2 W^H_s(x) + \frac{e^{-\beta x}}{\prb{G\geq x}}e^{-2 s} 
    \esp{(e^{2 G} - e^{2 x}) \ind{G\geq x}}.
  \end{align*}
  We focus on the expectation in the second term. For $a>0$, 
  \begin{align*}
    \esp{(e^{2G} - e^{2 x}) \ind{G\geq x}}
    =& 
     \esp{(e^{2 G} - e^{2 x}) \ind{x\leq G< x+a}}
     + \esp{(e^{2 G} - e^{2 x}) \ind{x+a\leq G}} \\
    \leq& 
    e^{2 x} (e^{2 a} - 1) \prb{x\leq G}
     + \esp{e^{2 G} \ind{x+a\leq G}} \\
    \leq& e^{2 x} (e^{2 a} - 1) \prb{x\leq G}
     + \PAR{\esp{e^{4 G}}}^{1/2} \prb{x+a\leq G}^{1/2}
   \end{align*}
   where we used the Cauchy-Schwarz inequality on the last term. 
   Plugging this back in the previous equation we get
  \begin{align*}
    HW^H_s(x) \leq
    -  2 W^H_s(x) + e^{-\beta x}(e^{2a} -1) W^H_s(x) +
      \frac{e^{-\beta x - 2 s}\prb{G\geq x+a}^{1/2}}{\prb{G\geq x}} 
      \esp{e^{4 G}}^{1/2}. 
  \end{align*}
  Since $\prb{G\geq x} = \exp(-\frac{1}{\beta}e^{\beta x})$, $\prb{G\geq x+a}^{1/2} = \exp( - \frac{e^{\beta a} }{2\beta} e^{\beta x})$,
  so if $a>\ln(2)/\beta$, the last term goes to zero doubly exponentially fast when $x\to+\infty$.
  In the second term $e^{-\beta x}$ goes to zero, so we finally get
  \[HW^H_s(x) \leq -  W^H_s(x)\]
  when $x$ is large enough. 
\end{proof}

\subsection{Proof of Theorem~\ref{thm:Vdecreases}}
\label{sec:Vdecreases}

In order to prove Theorem~\ref{thm:Vdecreases}, we need an
intermediate lemma on the family of laws $q(y,z)\, dz$. Let us recall that
a probability measure $\mu_1$ on $\dR$ is said to be smaller for the stochastic
order than a probability measure $\mu_2$ on $\dR$, which we denote by
\[ \mu_1 \le_{s.o.} \mu_2\]
if and only if the associated survival functions are
ordered as follows:
\[
  \forall x \in \dR, \mu_1( (x, \infty)) \le \mu_2( (x,\infty)).
  \]
  Let us also recall the following classical property (see
  for example \cite[Proposition 17.A.2]{MOA11}): 
\begin{equation}\label{eq:so}
\mu_1 \le_{s.o.} \mu_2 \iff \text{for all increasing measurable
  bounded function } \varphi: \dR \to \dR, \int \varphi d\mu_1 \le
\int \varphi d\mu_2.
\end{equation}
\begin{lem}\label{lem:so}
Let us consider the family of probability measures $q(y,z)\,dz$
defined by~\eqref{eq:q}. Then, for any $y_1 \le y_2$, $q(y_1,z)\, dz
\le_{s.o.} q(y_2,z)\, dz$.
\end{lem}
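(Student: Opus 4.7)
The plan is to compute the survival function of $q(y, \cdot)$ explicitly and then check it is increasing in $y$, which by the characterization recalled just before the lemma gives the desired stochastic order.

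First I would observe that $q(y,z)\,dz$ is supported on $[y,\infty)$ and that the change of variables $u = \int_y^z e^{\beta s}\,ds$ (which gives $du = e^{\beta z}\,dz$) immediately yields the explicit survival function: for $x \ge y$,
\[
 \bar Q(y,x) \defEgal \int_x^\infty q(y,z)\,dz
   = \int_{\int_y^x e^{\beta s}\,ds}^{\infty} e^{-u}\,du
   = \exp\!\left(-\int_y^x e^{\beta s}\,ds\right),
\]
and $\bar Q(y,x)=1$ for $x<y$. In passing this also verifies that $q(y,\cdot)$ is a probability density.

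Next I would check monotonicity in $y$ for fixed $x$ by splitting into three cases: (i) if $x<y_1\le y_2$ then both survival functions equal $1$; (ii) if $y_1 \le x < y_2$ then $\bar Q(y_2,x)=1$ while $\bar Q(y_1,x)\le 1$; (iii) if $y_1\le y_2\le x$ then $\int_{y_1}^x e^{\beta s}\,ds \ge \int_{y_2}^x e^{\beta s}\,ds$, so that $\bar Q(y_1,x)\le \bar Q(y_2,x)$ by monotonicity of the exponential. In all cases $\bar Q(y_1,x)\le \bar Q(y_2,x)$, which is exactly the definition of $q(y_1,z)\,dz \le_{s.o.} q(y_2,z)\,dz$ recalled just before the lemma.

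There is essentially no obstacle: once the explicit formula for $\bar Q$ is obtained, the monotonicity in $y$ is a one-line computation. The only point that requires a tiny bit of care is handling the case $x\in[y_1,y_2)$, where one compares $\bar Q(y_1,x)\le 1=\bar Q(y_2,x)$ rather than two nonzero exponentials; this is why treating the three cases separately is cleaner than trying to write a single formula valid for all $x$.
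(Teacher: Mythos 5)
Your proof is correct and takes essentially the same approach as the paper: compute the survival function of $q(y,\cdot)$ explicitly (you obtain $\exp(-\int_y^x e^{\beta s}\,ds)$ for $x\ge y$ and $1$ otherwise, which is exactly the paper's $\exp(-\int_y^{x\vee y} e^{\beta s}\,ds)$) and then check it is nondecreasing in $y$. The only cosmetic difference is that the paper writes the formula uniformly via $x\vee y$ and differentiates in $y$, whereas you split into three cases and use monotonicity of the exponential; both arguments are equally valid.
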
 
\begin{proof}
Let us first notice that for any $(x,y) \in \dR^2$,
\begin{align*}
\int_x^\infty q(y,z) \, dz
&= \int_{x \vee y}^\infty e^{\beta z} \exp\left(- \int_{y}^z e^{\beta
  s} ds \right) \, dz = \exp\left(- \int_{y}^{x \vee y} e^{\beta
  s} ds \right).
\end{align*}
We conclude by remarking that $\frac{\partial}{\partial y} \int_{y}^{x \vee y} e^{\beta
  s} ds = -1_{y < x}  e^{\beta
  y} \le 0$.
\end{proof}

We are now in position to prove Theorem~\ref{thm:Vdecreases}. Recall that $W(x,i) = \exp(\chi S(x))$, for a value 
of $\chi$ to be fixed at the end the proof. 

Let us consider the process $(X_t,I_t)$ with
generator~\eqref{eq:generator_metadynamics_flat} starting from $(X_0,I_0)=(x,i)$.
Let $(l_0,\ldots,l_K)$ be a discrete antiderivative of $x$
(see~\eqref{eq:antiderivative}, and let 
\[ \tilde{L}_t(j) = L_t(j) + l_j,\]
so that $j \mapsto L_t(j)$ is the time profile at time $t$  and $j
\mapsto \tilde{L}_t(j)$ is the 
potential in which the process $I_t$ evolves. 
Similarly let $\tilde{\Lambda}_{j,r}(k) = \Lambda_{T_{j,r}}(k) + l_k$: in this notation
the Ray-Knight equation~\eqref{eq:ray-knight-lambda} becomes
\begin{equation}
  \label{eq:ray-knight-lambda-tilde}
  \tilde{\Lambda}_{j,r}(k) = \tilde{\Lambda}_{j,r}(k-1) + \eta^-_k(\Lambda_{j,r}(k-1)). 
\end{equation}
Let
\[ 
  M_t = \max_j \tilde{L}_t(j) - \max_j \tilde{L}_0(j).
\]
It is easy to see that $M_t$ is also the total time $\int_0^t1_{\{\tilde{L}_s(I_s)=\max_{0\le j\le K}\tilde{L}_s(j)\}}ds$ between 
$0$ and $t$ that the process $(I_s)_{s \in [0,t]}$ spends at the current maximum of $\tilde{L}$. As a consequence,
\begin{equation}
   \label{majM}
\forall t\ge 0,\,M_t\le t.
\end{equation}
By the definition of $S$, $S(X_t)$ increases at rate $K-1$ when $\tilde{L}_t(I_t)=\max_{0\le j\le K}\tilde{L}_t(j)$ and decreases at rate $-1$ otherwise. As a consequence, $S(X_t) = S(x)-(t-M_t)+(K-1)M_t=S(x) - t + KM_t$, so that
\begin{align*}
  \esp[(x,i)]{W(X_t,I_t)}
  =& \esp[(x,i)]{\exp(\chi (S(x) - t + KM_t))} \\
  =& W(x,i)e^{-\chi t} \esp[(x,i)]{ e^{\chi K M_t}}.
\end{align*}
The main idea in the following is that, since the process is repelled
from the maxima of $\tilde{L}$, the fraction of the time $M_t/t$ it will spend
at the maxima before time $t$ should be way smaller than $1/K$, since
$A$ is sufficiently large so that
it will not have time to fill the large wells before time $t$.

We choose three parameters $(b,A,a)$ such that ($\alpha$ being the parameter introduced in Theorem~\ref{thm:cvg_vt})
\begin{align}
   &0<b<1/K\label{majb}\\
&1<A<1+\frac{bK}{2(K-1)}\label{minmajA}\\
&0<a<\frac{\alpha b}{6+2\alpha (K-1)} \text{ which implies } a < \frac{b}{K-1} \label{maja}
\end{align}
and write
\begin{align}
  \notag
  \esp[(x,i)]{W(X_t,I_t)}
  &= W(x,i)e^{-\chi t} \esp[(x,i)]{ e^{\chi K M_t}} \\
  \notag
  &\leq W(x,i)e^{-\chi t}\PAR{
     \prb[(x,i)]{M_t \leq bt} e^{Kb\chi t} + \prb[(x,i)]{M_t >bt}e^{K\chi t}} \\
   \label{eq:bound_on_exp_V}
  &\leq W(x,i)\PAR{
      e^{-\chi(1-Kb) t} + \prb[(x,i)]{M_t >bt}e^{K\chi t}},
\end{align}
where we used \eqref{majM} for the second inequality.
To continue, we must bound $\prb[(x,i)]{M_t \geq bt}$ from above. 
\begin{align*}
  \prb[(x,i)]{M_t > bt} =& \prb[(x,i)]{ \max_j \tilde{L}_t(j) > \max_j\tilde{L}_0(j) + bt} \\
  \leq& \sum_j \prb[(x,i)]{ \tilde{L}_t(j) > \max_k \tilde{L}_0(k) + bt} \\
  \leq& \sum_j \prb[(x,i)]{L_t (j) > \max_k l_k -l_j + bt}. 
  \end{align*}
If $j$ is not on a plateau, then introducing $\underline{j}=\max\{k\in\{1,\ldots,j\}:|x_k|>At\}$ and $\bar{j}=\min\{k\in\{j+1,\ldots,K\}:|x_k|>At\}$ with convention $\max\emptyset=0$ and $\min\emptyset=K+1$, then either $\underline{j}\ge 1$ and $x_{\underline{j}}<-At$ so that $l_{\underline{j}-1}-l_j=-\sum_{k=\underline{j}}^jx_k>At-\sum_{k=\underline{j}+1}^jx_k\ge (A-(K-1)a)t$ or $\bar{j}\le K$ and $x_{\bar{j}}>At$ so that $l_{\bar{j}}-l_j=\sum_{k=j+1}^{\bar{j}}x_k>At-(\bar{j}-(j+1))at\ge (A-(K-1)a)t $.
Hence $\max_k l_k-l_j+bt> (A+b-(K-1)a)t$, where the right-hand side is
larger than $t$ by the lower bound in \eqref{minmajA} and the second
inequality in~\eqref{maja}. So the probability is zero when $j$ is not on a plateau. Therefore it is enough to bound
\[
  \prb[(x,i)]{L_t(j)>bt}=\prb[(x,i)]{T_{j,bt}<t}
\]
when $j$ is on a plateau of $(x,i)$. By symmetry we may suppose
without loss of generality that 
there is a "cliff" on the right of $j$ (say between sites $r-1$ and
$r$, for $r > j$). 
In the notation defined above for the Ray-Knight process, 
we may bound the desired probability from above by 
\begin{align*}
  \prb[(x,i)]{ \text{ At time $T_{j,bt}$,  the total time spent in $r$ is less than $t$}}
  =&
  \prb[(x,i)]{ \Lambda_{j,bt}(r) < t}. 
\end{align*}
The fact that $\prb[(x,i)]{T_{j,bt}<t} \le \prb[(x,i)]{
  \Lambda_{j,bt}(r)  < t}$ is indeed obvious since $\Lambda_{j,s}(k)
\le T_{j,s}$ for any $(j,k) \in \{0, \ldots, K\}^2$ and
$s>0$.


By definition, $\Lambda_{j,bt}(j) = bt$. To use the Ray-Knight description we decompose
\begin{align}
  \notag
  \Lambda_{j,bt}(r)& = \sum_{j<k\leq r} \PAR{\Lambda_{j,bt}(k) - \Lambda_{j,bt}(k-1)} + \Lambda_{j,bt}(j) \\
  \notag
  &= \sum_{j<k\leq r}\PAR{ \tilde{\Lambda}_{j,bt}(k) - \tilde{\Lambda}_{j,bt}(k-1) - l_k + l_{k-1}} + bt \\
  \label{eq:decomposition_Lambda}
  & =  \sum_{j<k\leq r}\PAR{ \tilde{\Lambda}_{j,bt}(k) - \tilde{\Lambda}_{j,bt}(k-1)}
     - \sum_{j<k\leq r} x_k + bt. 
\end{align}
Since $x$ is $(t,a,A)$-separated and $j$ is on a plateau $\{l,\ldots,r-1\}$, we know 
by Definition~\ref{def:separated} that
$\ABS{x_k} \leq at$ for $j<k<r$, and $x_r<-At$. Therefore

\begin{align*}
  \Lambda_{j,bt}(r) 
  &\geq   \sum_{j<k\leq r}\PAR{ \tilde{\Lambda}_{j,bt}(k) - \tilde{\Lambda}_{j,bt}(k-1)}
     - (K-1)at + At  + bt \\
  &\geq  - \sum_{j<k\leq r}\ABS{ \tilde{\Lambda}_{j,bt}(k) - \tilde{\Lambda}_{j,bt}(k-1)}
       + At  
\end{align*}
since $b \ge (K-1)a$ (from~\eqref{maja}). Therefore 
\[ 
  \Lambda_{j,bt}(r) < t 
  \quad \implies \quad
  \exists k\in\{j+1,\ldots,r\}, \ABS{\tilde{\Lambda}_{j,bt}(k) - \tilde{\Lambda}_{j,bt}(k-1)} > \frac{A-1}{K} t.
\]
Let $D_k = \ABS{\tilde{\Lambda}_{j,bt}(k) - \tilde{\Lambda}_{j,bt}(k-1)}$. 
If the condition on the right is satisfied, picking the leftmost $k$
for which $D_k$ is larger than $\frac{A-1}{K} t$
leads to:
\[ 
  \prb[(x,i)]{\Lambda_{j,bt}(r) < t}
  \leq \sum_{j<k\leq r} \prb[(x,i)]{ D_k > \frac{A-1}{K} t 
    \text{ and } D_{k'} \le \frac{A-1}{K}t \text{ for all } j<k'<k}.
\]
Reusing the decomposition \eqref{eq:decomposition_Lambda} with $k-1$ in place of $r$, 
we can bound $\Lambda_{j,bt}(k-1)$ from below: on the event that the
$D_{k'}$ are smaller than $\frac{A-1}{K}t$ for $j<k'<k$,
\[ \Lambda_{j,bt}(k-1) \geq bt - (K-1) at - (K-1)\frac{A-1}{K}t \geq (b/2 - (K-1)a)t,\]
by the upper-bound in \eqref{minmajA}.
Therefore, by the Ray-Knight equation~\eqref{eq:ray-knight-lambda-tilde}
\begin{align*}
  &\prb[(x,i)]{ D_k > \frac{A-1}{K} t 
    \text{ and } D_{k'} \le \frac{A-1}{K}t \text{ for all } j<k'<k} \\
  &\leq \prb[(x,i)]{ \left| \eta^-_k(\Lambda_{j,bt}(k-1)) \right| > \frac{A-1}{K} t \text{ and } \Lambda_{j,bt}(k-1) \geq (b/2 - (K-1)a)t}
\\
  &=\esp[(x,i)]{\varphi(\Lambda_{j,bt}(k-1))1_{\{ \Lambda_{j,bt}(k-1) \geq (b/2 - (K-1)a)t\}}},
\end{align*}
where $\varphi(s)=\prb[(x,i)]{\left| \eta_k^-(s)\right| >
  \frac{A-1}{K}t }$ by independence of $\eta^-_k$ and
$\Lambda_{j,r}(k-1)$. Using Theorem~\ref{thm:cvg_vt} and the fact that $W_s(y)\le 2 e^{2\max(y,0)}$ for the second inequality, we obtain
\begin{align}
  \varphi(s)
  &\leq\esp[(x,i)]{\NRM{\delta_{\eta^-_k(0)} Q_s - \nu}_{TV}} +
    \nu\left(\left(-\infty, - \frac{A-1}{K}t\right) \cup \left(\frac{A-1}{K}t,\infty\right)\right) \notag\\
  &\leq Ce^{-\alpha s} \esp[(x,i)]{2e^{2\max(\eta^-_k(0),0)}}+  \nu\left(\left(-\infty, - \frac{A-1}{K}t\right) \cup \left(\frac{A-1}{K}t,\infty\right)\right).\label{majophi}
\end{align}
If $i<k$, then $\prb[(x,i)]{\eta_k^-(0)=x_k}=1$ and if $i\ge k$,
$\eta_k^-(0)$ is distributed according to $q(x_k,z) \, dz$ (see
Theorem~\ref{th:ray-knight}). In the latter case, since $x_k\leq at$
(if $k=r$, it is even negative) and the function $z\mapsto
e^{2\max(z,0)}$ is non-decreasing, using Lemma~\ref{lem:so}
and~\eqref{eq:so}, one has:
\begin{align*}
   \esp[(x,i)]{e^{2\max(\eta^-_k(0),0)}} = \int_{\dR} e^{2 \max(z,0)} q(x_k,z) \, dz \le
   \int_{\dR} e^{2 \max(z,0)} q(at,z) \, dz
\end{align*}
and thus
\begin{align*}
   \esp[(x,i)]{e^{2\max(\eta^-_k(0),0)}}\le \int_{at}^\infty
  e^{(2+\beta)z}e^{-\int_{at}^z e^{\beta s}ds}
  dz=e^{2at}+2\int_{at}^\infty e^{2z}e^{-\int_{at}^z e^{\beta s}ds}dz.
\end{align*}
Since $\int_{at}^\infty e^{2z}e^{-\int_{at}^z e^{\beta s}ds}dz \le
e^{-\beta at}\int_{at}^\infty e^{(2+\beta)z}e^{-\int_{at}^z e^{\beta
    s}ds}dz$ the second term in the right-hand side is negligible
compared to the first one in the limit
$t\to\infty$. Therefore, in both cases $i<k$ and $i\ge k$, $$\esp[(x,i)]{e^{2\max(\eta^-_k(0),0)}}\le C
e^{2at}\le Ce^{\alpha (b/2 - (K-1)a)t} e^{-at},$$ 
for some positive constant $C$, the second inequality being a
consequence of the condition \eqref{maja}
on $a$. Plugging this inequality into \eqref{majophi} an using the fact that the tails of $\nu$ are sub-exponential, we conclude that
\begin{align*}
  \forall s\geq (b/2 - (K-1)a)t,\,\varphi(s)\leq Ce^{-at}. 
\end{align*}

Retracing our steps, we see that
\begin{align*}
  \prb[(x,i)]{M_t\geq bt} &\leq \sum_{j \text{ on a plateau }\{l(j),\ldots,r(j)-1\}} \prb[(x,i)]{\Lambda_{j,bt}(r) \leq t} \\
  &\leq \tilde{C} e^{-at}
\end{align*}
for some positive $\tilde{C}$.
Inserting this bound in Equation~\eqref{eq:bound_on_exp_V}, and choosing $\chi=a/(2K)$, we  get
\begin{align*}
  \esp[(x,i)]{W(X_t,I_t)}
  &\leq W(x,i)\PAR{ e^{-\chi(1-Kb) t} + \tilde{C} e^{(K\chi - a) t}}
  \leq W(x,i)\PAR{ e^{-\chi(1-Kb) t} + \tilde{C} e^{- (a/2) t}}. 
\end{align*}
For some $t_0$ large enough, the term between brackets is strictly
less than $1$, as soon as $t \ge t_0$. 
Call this term $\kappa$.  We have proved that for the values of $a$, $A$, $\chi$, $\kappa$ 
and $t_0$ defined above, and for $t$ larger than $t_0$, 
$\esp[(x,i)]{W(X_t,I_t)} \leq \kappa W(x,i)$.
This concludes the proof of Theorem~\ref{thm:Vdecreases}.

\section*{Acknowledgements} This work is supported by the European Research Council under the European Union's Seventh Framework Programme (FP/2007-2013) / ERC Grant Agreement number 614492.

\printbibliography


\end{document}